\theoremstyle{plain}
\newtheorem{theorem}{Theorem}[section]
\newtheorem{theo}{Theorem}[section]
\newtheorem{lemma}[theorem]{Lemma}
\newtheorem{prop}[theorem]{Proposition}
\theoremstyle{definition}
\newtheorem{rema}[theorem]{Remark}
\newtheorem{assump}{Assumption}[section]
\numberwithin{equation}{section}
\newenvironment{ sys_eq }{\ left \ lbrace \ begin { array }{ @ {} l@ {}}}{\ end { array }\ right .}
\newcommand{\al}{\alpha}
\newcommand{\de}{\delta}
\newcommand{\ep}{\epsilon}
\newcommand{\la}{\lambda}
\newcommand{\Om}{\Omega}
\newcommand{\TT}{\mathbb T}
\newcommand{\RR}{\mathbb R}
\newcommand{\ZZ}{\mathbb Z}
\newcommand{\rar}{\rightarrow}
\newcommand{\dive}{\operatorname{div}}
\newcommand{\p}{\parallel}
\newcommand{\n}{\parallel}
\newcommand{\tl}{\Xi}
\newcommand{\half}{\sfrac{1}{2}}
\title[Slightly compressible fluids II]{Motion of slightly compressible fluids in a bounded 
domain. II}
\author[Disconzi]{Marcelo M. Disconzi}
\address{\vspace{-0.5cm}  Department of Mathematics\\
Vanderbilt University\\ Nashville, TN 37240, USA}
\email{marcelo.disconzi@vanderbilt.edu}
\thanks{Marcelo M. Disconzi is partially supported by NSF grant
1305705.}
\author[Ebin]{David G. Ebin}
\address{\vspace{-0.5cm} Department of Mathematics\\
Stony Brook University\\ Stony Brook, NY 11794-3651, USA}
\email{ebin@math.sunysb.edu}
\begin{document}

\begin{abstract}
We study the problem of inviscid slightly compressible fluids in a bounded domain. 
We find a unique solution to the initial-boundary value problem and show that it is near
 the analogous solution for an incompressible fluid provided the initial 
conditions for the two problems are close. 
In particular, the divergence of the initial velocity of the compressible flow at time zero is assumed to be small.
Furthermore we find that solutions to the compressible motion problem
in Lagrangian coordinates depend differentiably on their initial data,
an unexpected result for this type of non-linear equations.  

\end{abstract}

\maketitle

\tableofcontents

\section{Introduction \label{intro}}

This paper is a continuation of \cite{E4} in which we prove the statements that were announced but not proven there. In that paper we studied the initial value problem for the motion of inviscid fluids in a bounded domain.  We looked at both compressible and incompressible fluids and showed that the flow of the former is close to that of the latter if the compressibility is low 
(or, what is the same, if the sound speed is large)
and the initial conditions for two flows are close. Since the divergence of the velocity of the incompressible fluid is zero, this requires that the divergence of the initial velocity of the  compressible fluid be small.
For the reader's convenience, we comment on the differences between this work and \cite{E4} in section
\ref{review}, after stating the main theorems. A discussion of the main results and a brief review of the literature is also 
carried out in section \ref{review}.

In \cite{E4} we announced but did not prove that for compressible fluids the fluid density has one more spatial derivative than the velocity.  This fact is curious because as is well known the fluid density and velocity together satisfy a quasilinear symmetric hyperbolic system and thus one would not expect either to be smoother than the other.  Furthermore, as a consequence of the extra smoothness we find that the solution of the system, when expressed in Lagrangian coordinates is differentiably dependent on the initial data.  This is unexpected because for quasilinear systems the dependence is usually continuous, but not differentiable. See for example \cite{K2},
and section \ref{review}.

 We note that in a previous paper \cite{E2} similar results were proven for fluid motions which are periodic in 
space, but here we prove them in a domain with boundary. The boundary case is more difficult because it 
involves more complicated estimates (cf. section \ref{estimates}), but it is also more important because most fluid problems do 
involve boundaries. Furthermore, our proof is more direct than of \cite{E2} in that it avoids operator theoretic methods.

Finally, (section \ref{assymp}) following a suggestion of Professor C. S. Morawetz, we explain how the construction used to show 
that slightly compressible motion is near to incompressible actually gives us a sequence of successive 
approximations to the compressible motion. The first term in the sequence is the incompressible motion, 
and the second is derived by studying the propagation of sound caused by the first motion.

We will use the same notation as in \cite{E4} and for the reader's convenience repeat some of the formulas and estimates written there.
Thus throughout the paper, $u,\rho$ and $p$ will denote the velocity, density, and pressure of compressible motion. They are 
all functions of time $t$ and of position $x=(x^1,\dots,x^n)$; that is, $u(t,x)$ is the velocity of that fluid particle which 
is at position $x$ at time $t$; $x$ is known as the Euler coordinate of the particle.

Given the velocity $u(t,x)$, one can find the flow of the fluid, $\zeta(t,x)$, as the solution of the ordinary 
differential equation
\begin{gather}
 \partial_t \zeta(t,x) = u(t,\zeta(t,x)), ~~~ \zeta(0,x) = x.
\nonumber
\end{gather}
Then $\zeta(t,x)$ is the position at time $t$ of the fluid particle which was at $x$ at time zero. $x$ is known as the 
Lagrange coordinate of the particle; its Euler coordinate at time $t$ is $\zeta(t,x)$.

For incompressible fluid motion the velocity will be called $v=v(t,x)$, and will also be the velocity of the particle with Euler 
coordinate $x$. Its flow will be called $\eta(t,x)$ so $\eta$ will satisfy
\begin{gather}
 \partial_t \eta(t,x) = v(t,\eta(t,x)), ~~~ \eta(0,x)=x.
\nonumber
\end{gather}
The density of an incompressible fluid is always taken to be $1$, and the pressure, if used, will also be called $p$. However, 
strictly speaking, there is no notion of pressure for the idealized incompressible fluid. This is because of the following:
physical fluids are of course at least slightly compressible, so the solutions of the equations of idealized incompressible fluids have physical meaning only if they are close to the solutions of the equations of compressible fluids.  We prove that when the compressibility is small, the positions and velocities of the two types of flows are close, but we  
do not prove anything about the accelerations.  In fact one can find examples of incompressible flows for which the accelerations are not close.  Thus the pressures, whose gradients are the accelerations, are also not close so $\nabla p$ for the incompressible flow is not close to $\nabla p$ for the compressible flow and hence has no direct physical 
meaning (see, however, remark \ref{remark_pressure_dist}).
 
We can in fact consider the following concrete example: consider a fluid filling a disk, and let it move by spinning the disk as a rigid body at a velocity of $\frac{\partial}{\partial \theta}$ where $\theta$ is the angular coordinate.  This is a steady state incompressible motion whose ``pressure" is $-r^2 /2$ where $r$ is the radial coordinate.  Now take a family of compressible fluids governed by the relation $p_k = k \log \rho$ with $k$ large.  These fluids will oscillate towards the edge of the disk with an amplitude proportional to $1/k$ and a frequency like $\sqrt{k}.$  The radial velocity will have an amplitude like $1/\sqrt{k}$ and the acceleration will oscillate with an amplitude like 1.  Hence the pressure will also oscillate   
with an amplitude like 1 so it will not be close to the ``pressure" of the incompressible fluid.  Thus we find that the motion in the incompressible case is physically relevant, but the pressure is not (this phenomenon of convergence of the zeroth and first time derivatives, but not higher ones is typical of motions with strong constraining force; see \cite{E2}). 
Note that in the incompressible case, the pressure is not an independent unknown. At any instant it is determined entirely by the velocity at that instant (see equation (\ref{eq_v_no_p})).

Our analysis of fluid motion will be partly in Eulerian and partly in Lagrangian coordinates. Spatial derivatives and 
many $L^2$ estimates involving them will be done using Eulerian coordinates. Time derivatives will usually be written in Lagrangian
coordinates or (what is essentially the same) as material derivatives in Eulerian coordinates. This seems to be the best way 
to study the situation because the $L^2$ estimates are simpler in Eulerian coordinates, but the Lagrangian coordinates 
make it possible to write some of the equations as ordinary differential equations on a function space. In fact 
one result, the $C^1$ dependence on initial data, is true in Lagrangian coordinates, but not in Eulerian coordinates.

\section{Equations of Fluid Motion \label{equations_fluid}}
We consider the motion of a fluid which fills a fixed bounded domain $\Om \subset \RR^n$, and we assume that $\Om$ has a 
smooth boundary $\partial \Om$, which is a compact $(n-1)$-dimensional submanifold of $\RR^n$.

In the compressible case, we restrict our attention to barotropic fluids, (fluids whose pressure depends only on the density). Their equations of motion are:

\begin{subnumcases}{\label{eq_comp}}
\frac{\partial u}{\partial t} + \nabla_u u = -\frac{1}{\rho}\nabla p, \label{eq_comp_u} \\
\frac{\partial \rho}{\partial t} + \operatorname{div}(\rho u) = 0 ,
\label{eq_comp_rho} 
\end{subnumcases}

For the incompressible case, the equations are:
\begin{subnumcases}{\label{eq_incomp}}
 \frac{\partial v}{\partial t} + \nabla_v v = -\nabla p, \label{eq_incomp_v}\\
\operatorname{div}(v) = 0 .
\label{eq_incomp_rho}
\end{subnumcases}
Here, $\nabla$ means gradient, $\operatorname{div}$ means divergence and $\nabla_w$ means derivative in 
direction $w$, $w$ a vector.  Thus in coordinates, where all indices run from $1$ to $n$ and repeated indices are summed:
\begin{gather}
(\nabla_v v)^j = v^i \frac{\partial v^j}{\partial x^i}\hspace{.1in} {\rm and}\hspace{.1in} \operatorname{div}(\rho u) = \frac{\partial(\rho u^i)}{\partial x^i}
\nonumber
\end{gather}

The equations of motion are accompanied by the following boundary conditions which express the physical 
condition that the fluid does not pass through the boundary. To write them we let $\nu$ be the unit outward normal vector 
field on $\partial \Om$. Then, for compressible motion,
\begin{gather}
\langle u, \nu \rangle = 0, \,\, \text{ which implies } \, \,  \langle \nabla_u u, \nu \rangle = -\frac{1}{\rho} \nabla_\nu p ,
\label{bc_comp}
\end{gather}
and for incompressible motion,
\begin{gather}
\langle v, \nu \rangle = 0, \,\, \text{ which implies } \,\, \langle \nabla_v v, \nu \rangle = - \nabla_\nu p .
\label{bc_incomp}
\end{gather}
Thus, equations (\ref{eq_comp_u})-(\ref{eq_comp_rho}) with boundary condition (\ref{bc_comp}) shall be known as the compressible
motion problem, while (\ref{eq_incomp_v})-(\ref{eq_incomp_rho}) with boundary condition
(\ref{bc_incomp}) shall be the incompressible motion problem.

For the compressible case the pressure can be written as a function of the density so in addition to  (\ref{eq_comp_u})-(\ref{eq_comp_rho}) we specify this function, writing
\begin{gather}
 p=p(\rho).
\label{p_of_rho}
\end{gather}
The core of this paper will be an analysis of the density for a compressible motion. We shall show that it satisfies a 
hyperbolic equation and use this equation to make estimates. As density is positive we can take its logarithm which turns out to be more convenient to work with. Thus we define the function 
\begin{gather}
 f = \log \rho .
\nonumber
\end{gather}
In addition to partial derivatives of $f$ we shall use the material derivative which we denote by `` $\dot{}$ ''. This derivative 
depends on the fluid velocity which we here label $u=u(t,x)$. Specifically,
 for any function  $h$ depending on $x$ and $t$ we 
define the material derivative of $h$ with respect to a fluid moving with velocity $u$ to be
\begin{gather}
 \dot{h} = \frac{\partial h}{\partial t} + \nabla_u h .
\nonumber
\end{gather}
Using (\ref{eq_comp_rho}) we find:
\begin{gather}
 \dot{f} = \frac{1}{\rho}\dot{\rho} = -\operatorname{div}(u) .
\label{f_dot_div_u}
\end{gather}
Taking another material derivative and using (\ref{eq_comp_u}) we find
\begin{gather}
 \ddot{f} = \operatorname{div}(p^\prime(\rho)\frac{1}{\rho}\nabla \rho ) + u^i_j u^j_i
\label{eq_f_1}
\end{gather}
where $u^i_j$ means $\frac{\partial u^i}{\partial x^j}$ and we sum over repeated 
indices. We shall rewrite (\ref{eq_f_1}) as a hyperbolic equation in $f$.

In order to rewrite (\ref{eq_f_1}) we shall need additional notation. We note that for physical reasons $p^\prime(\rho)$ is 
non-negative (pressure cannot decrease as density increases). Thus we can define a function $c$ by
\begin{gather}
 c = \sqrt{p^\prime(\rho)} .
\label{definition_of_c}
\end{gather}
This ``$c$'' is the sound speed of the fluid, and, like $\rho$, it depends on $x$ and $t$. 

Also, we introduce the operator $\de = -\operatorname{div}$ which is the formal adjoint of $\nabla$.

With this notation (\ref{eq_f_1}) can be rewritten as
\begin{gather}
 \ddot f = - \de c^2 \nabla f + u^i_j u^j_i
\label{eq_f_convected}
\end{gather}
and can be regarded as a linear hyperbolic equation in $f$ with inhomogeneous term $u^i_j u^j_i$ (although the 
equation is not quite linear since $c$ depends on $f$). It is sometimes referred to as a convected wave equation --- convected because the usual time derivative is 
replaced by the material derivative.

The boundary condition (\ref{bc_comp}) gives a boundary condition for (\ref{eq_f_convected}) as well. From our definitions 
we find that
\begin{gather}
 \frac{1}{\rho} \nabla p = c^2 \nabla f,
\nonumber
\end{gather}
so (\ref{bc_comp}) implies
\begin{gather}
 \nabla_\nu f = -\frac{1}{c^2}\langle \nabla_u u, \nu \rangle .
\label{normal_der_f}
\end{gather}
This is an inhomogeneous Neumann boundary condition for the equation (\ref{eq_f_convected}).

For incompressible fluid motion, the 
density is constant (in (\ref{eq_incomp_v}) we have taken it to be equal to
 $1$), and $p$ does not correspond to any physical quantity (as explained in the introduction) and can in fact be written in terms of
 $v$, as we now show. 
In fact for any given time $t$, $p(t)$ is determined by $v(t)$ (up to a constant), although the determination is not local (in $x$). 
This is easily shown as follows: (\ref{eq_incomp_rho}) implies that
\begin{gather}
 \de \big ( \frac{\partial v}{\partial t} \big ) = 0, \,\, \text{ so } \, \,
  \de \big ( \nabla_v v \big ) = -\delta \nabla p.
\nonumber
\end{gather}
But since 
\begin{gather}
 -\de \nabla p = \Delta p 
\nonumber
\end{gather}
we get
\begin{gather}
 \Delta p = \de \big( \nabla_v v \big ) .
\label{lap_p}
\end{gather}
Also (\ref{bc_incomp}) gives a Neumann boundary condition for $p$, so given $v$ and (\ref{bc_incomp}), (\ref{lap_p}) has a 
solution $p$ which is unique modulo an additive constant. $\nabla p$ is thus determined uniquely by $v$.

Given a vector field $w$ in $\Om$, if we define an operator $Q$ by 
\begin{gather}
 Q(w) = \nabla g,
\nonumber
\end{gather}
where $g$ solves
\begin{gather}
\begin{cases}
 \Delta g = -\de w ~~ \text{ in } \Om ,\\
\nabla_\nu g = \langle w, \nu \rangle ~~\text{ on } \partial \Om,
\end{cases}
\nonumber
\end{gather}
then the system 
(\ref{eq_incomp_v})-(\ref{eq_incomp_rho}) with (\ref{bc_incomp}) is equivalent to
\begin{gather}
 \frac{\partial v}{\partial t} + \nabla_v v = Q(\nabla_v v)
\label{eq_v_no_p}
\end{gather}
so the artificial $p$ is eliminated.

The operator $Q$ is useful in the compressible motion as well. We let $P=I-Q$, where $I$ is the identity operator, and 
decompose the velocity as
\begin{gather}
 u = P(u) + Q(u) = w + \nabla g.
\nonumber
\end{gather}
This is the well known decomposition of a vector field into its solenoidal (or divergence free) and gradient parts. We shall 
apply $P$ and $Q$ to (\ref{eq_comp_u}) to get equations for $w$ and $\nabla g$.

First note that $Q^2 = Q$, so $P^2 = P$. Also, for any function $g$, $Q(\nabla g) = \nabla g$, and $P(\nabla g)=0$. Furthermore 
since $p$ is a function of $\rho$
\begin{gather}
 \frac{1}{\rho}\nabla p = \frac{1}{\rho}p^\prime(\rho)\nabla \rho = \nabla \int_1^\rho \frac{1}{\la}p^\prime(\la)d\la
\label{eq_grad_pressure}
\end{gather}
so
\begin{gather}
 P(\frac{\partial u}{\partial t} + \nabla_u u ) = 0 .
\label{P_applied_eq_u}
\end{gather}
Also
\begin{gather}
 \nabla_{\nabla g}\nabla g = \frac{1}{2} \nabla \langle \nabla g, \nabla g \rangle .
\label{double_grad_g}
\end{gather}
From (\ref{P_applied_eq_u}) and (\ref{double_grad_g}) we get
\begin{gather}
 \frac{\partial w}{\partial t} + P(\nabla_u w) = -P(\nabla_w \nabla g) .
\label{eq_w}
\end{gather}
Applying $Q$ to (\ref{eq_comp_u}) we can get a similar equation for $\nabla g$. However, since $\de \nabla g = \de u$ and 
since by (\ref{f_dot_div_u}) we have
\begin{gather}
 \dot{f} = \de u,
\nonumber
\end{gather}
we can investigate $\nabla g$ by studying $\dot{f}$.

We are concerned with the initial-boundary value problem for our fluid motion, so we prescribe $u_0(x) = u(0,x)$ and 
$\rho_0(x) = \rho(0,x)$, the initial velocity and density. As we showed in \cite{E3}, the system (\ref{eq_comp_u})-(\ref{eq_comp_rho}) and (\ref{p_of_rho}) with initial data $u_0$, $\rho_0$ and with boundary condition (\ref{bc_comp}) has a 
unique solution on some time interval provided $u_0$ and $\rho_0$ satisfy the following conditions:

a) For each $x \in \Om$, $|u_0(x)| < c(0,x) = \sqrt{p^\prime(\rho_0(x))}$;

b) $\rho_0$ is close to a constant;

c) $u_0$, $\rho_0$ satisfy compatibility conditions on $\partial \Om$. That is,
\begin{gather}
 \langle u_0, \nu \rangle = 0, \nonumber \\
 \langle \nabla_{u_0} u_0 + \frac{1}{\rho_0}p^\prime(\rho_0) \nabla \rho_0, \nu \rangle = 0, 
\nonumber
\end{gather}
etc.

Since we want to compare the solutions of (\ref{eq_comp_u})-(\ref{eq_comp_rho}) to that of (\ref{eq_incomp_v})-(\ref{eq_incomp_rho}) 
when the initial data are close we will assume that $v_0$, the initial datum for 
(\ref{eq_incomp_v})-(\ref{eq_incomp_rho}) is near $u_0$. Since $v$ must satisfy (\ref{eq_incomp_rho}) we have
$\operatorname{div}(v_0) = 0$, and therefore $\operatorname{div}(u_0)$ must be small. Then from (\ref{eq_f_1}) we find that
$\dot{f}(0)$ is small.

Also since $\rho$ is taken to be identically one in the incompressible fluid problem, we want $\rho_0$ to be nearly one as well. 
This means that $f(0) = \log \rho_0$ must also be small.

In section \ref{estimates}, we shall show that $f(0)$ and $\dot{f}(0)$ being small implies that $f(t)$ and $\dot{f}(t)$ are 
small as well.

\section{The Equation of State, Main Theorems, and discussion}

In this section we fix our notation and state the main results. We also give a brief discussion 
of similar results in the literature. 

\subsection{Main results\label{func_space}}

We begin by fixing our notation. We denote by $\nabla^\ell g$ the vector valued function consisting of all $\ell^{\text{th}}$ order partial derivatives of a function $g$.  $C^k(\Om,\RR^m)$ denotes the spaces of  $k$-times continuously differentiable 
functions on $\Om$ with values in $\RR^m$, and 
 $H^s(\Om,\RR^m)$ denotes the Sobolev space of $s$-times (weakly) differentiable functions whose derivatives belong
 to $L^2$, with the understanding that Fourier transform is employed to interpret non-integer or negative values of $s$, so that
 $s$ can be any real number \cite{P}. We sometimes write simply $C^k(\Om)$ and $H^s(\Om)$ or even $C^k$ and $H^s$.
 The norms on these spaces are denoted $\parallel \cdot \parallel_{C^k} $ and $\parallel \cdot \parallel_s$.
 The Sobolev norm on $H^s(\partial \Om)$ is denoted $\parallel \cdot \parallel_{\partial,s}$.
For  $s > \frac{n}{2} + 1$, we denote by $  \mathscr{D}^s(\Om) \equiv \mathscr{D}^s$ 
the group of $H^s$ diffeomorphisms of the domain $\Om$. All these definitions, along with some known results
that will be used in the paper, are recalled in the appendix. 
In particular, we point out  
inequality (\ref{product_estimate}) which will be used throughout the paper to estimate
the several products involved.

Our estimates on $f$ and $\dot{f}$ will be made by considering $f(t),~\dot{f}(t): \Om \rar \RR$. We shall 
estimate for each $t$ the $L^2$ norms of these functions. Our estimates for $f$ and $\dot{f}$ will depend on the fluid motion $\zeta(t)$, but also on the equation of 
state (\ref{p_of_rho}); i.e., on the relationship between $p$ and $\rho$. We are interested in slightly compressible 
fluids and their motion is governed by functions $p(\rho)$ where $p^\prime(\rho) = \frac{dp}{d\rho}$ is large, which is equivalent to the sound speed's being large.
Thus we shall 
consider a family $\{ p_k(\rho) \}$ parameterized by $k \in \RR_+$, and we shall assume that the parameterization is chosen such that:
\begin{gather}
p^\prime_k(\rho)_{| \rho=1} = k. 
\nonumber
\end{gather}
We are concerned with the fluid motion when $k$ is large and in its limit as $k \rar \infty$.

We shall need conditions on $p_k(\rho)$ for $\rho$ near $1$ so we make the following assumption:
\begin{assump} There exist positive constants $a_0$ and $a_1$ such that
\begin{gather}
 \parallel p_k^{(\ell)}(\rho)\parallel_{s+1} \leq k a_1 ~~~ \ell=1,2,\dots,s+2
\nonumber
\end{gather}
provided that $\parallel \rho -1 \parallel_{s+1} \leq \frac{a_0}{\sqrt{k}}$.
\label{assumption_on_rho}
\end{assump}
Assumption \ref{assumption_on_rho} involves the norm of a composite function; that is $\rho$ and $p^{(\ell)}(\rho)$ are 
considered as functions on $\Om$, the latter being $x \mapsto p^{(\ell)}(\rho(t,x))$. The $\parallel~\parallel_s$ norms of
such functions are estimated in a straightforward manner using the derivatives of $p$ and $\parallel \rho \parallel_s$ (see 
for example Lemma A.2 of \cite{BB}).

Assumption \ref{assumption_on_rho} is actually more restrictive than necessary since it bounds several derivatives by a 
multiple of $k$. It is possible to use a more general assumption, as was done in 
equations (5.3)-(5.4) of \cite{E2}, but this would make 
the argument more complicated. Hence we content ourselves to the less general case and leave extensions to the interested reader.

Before starting our theorems, we mention the compatibility conditions required for an $H^s$ compressible motion. As is 
stated in \cite{E3}, the equations (\ref{eq_comp_u})-(\ref{eq_comp_rho}) admit an $H^s$ solution $u$, $\rho$ only if 
the initial data $u_0$, $\rho_0$ satisfy compatibility conditions up to order $s-1$. The zeroth order condition is simply
\begin{gather}
 \langle u_0,\nu\rangle = 0 ~\text{ on }~ \partial \Om .
\label{zeroth_ord_comp_cond}
\end{gather}
All time derivatives of $u$ must also have zero normal component and using this and the equations of motion we get the 
first order compatibility condition:
\begin{gather}
 \langle \nabla_{u_0} u_0 + c^2_0 \nabla f_0,\nu\rangle = 0 ~\text{ on }~ \partial \Om,
\label{first_ord_comp_cond}
\end{gather}
where $c_0 = \sqrt{p^\prime(\rho_0)}$ is the sound speed of the fluid with density $\rho_0$ and $f_0 = \log\rho_0$.

Taking another derivative we get
\begin{gather}
 \langle \nabla_{u} \partial_t u + \nabla_{\partial_t u} u + \partial_t(c^2)\nabla f + c^2 \nabla \partial_t f, \nu \rangle = 0
\text{ on } \partial \Om .
\label{another_der_first_ord}
\end{gather}
To analyze (\ref{another_der_first_ord}) we use the so called second fundamental form of $\partial \Om$ which we call $S_2$. 
Given any two vector fields $z_1$ and $z_2$ on $\partial \Om$ we define 
\begin{gather}
 S_2(z_1,z_2) = \langle \nabla_{z_1}z_2,\nu\rangle = - \langle z_1,\nabla_{z_2} \nu \rangle 
\nonumber
\end{gather}
which is a function on $\partial \Om$. $S_2$ is easily seen to be symmetric in $z_1$ and $z_2$, and at 
each $p \in \partial \Om$ it depends only on $z_1(p)$ and $z_2(p)$ and not on their derivatives. Using 
it, (\ref{another_der_first_ord}) can be written
\begin{gather}
 2 S_2(u,\partial_t u) + \partial_t(c^2)\nabla_\nu f + c^2 \nabla_\nu \partial_t f=0 \, \text{ on } \, \partial \Om.
\nonumber
\end{gather}
Substituting for the time derivatives using (\ref{eq_comp_u}) and (\ref{eq_comp_rho}) we get the second order 
compatibility condition
\begin{align}
\begin{split}
& -2 S_2(u_0,\nabla_{u_0}u_0 + c_0^2 \nabla f_0) + p^{\prime\prime}(\rho_0)\rho_0(-\nabla_{u_0}f_0 
 + \de u_0)\nabla_\nu f_0 \\
 & + c_0^2\nabla_\nu(-\nabla_{u_0}f_0 + \de u_0 ) = 0 \, \text{ on } \, \partial \Om.
\end{split}
\label{second_ord_comp_cond}
\end{align}
Higher order conditions are computed in the same way. One sees inductively that the condition of order $\ell$ 
depends on $\ell$ derivatives of $f_0$ or $\rho_0$ but only $\ell-1$ derivatives of $u_0$ and $\de u_0$. That is, 
although there are $\ell^{\text{th}}$ order derivatives of $u_0$, they can all be expressed as $(\ell-1)^{\text{st}}$ 
derivatives of $u_0$ and $\de u_0$. In (\ref{second_ord_comp_cond}), for example, we have only first order 
derivatives of $u_0$ and $\de u_0$. With these compatibility conditions we are ready to state our theorems.
\begin{theo}
 Let $s > \frac{n}{2} + 1$ and let $u_{0k} \in H^s(\Om, \RR^n)$ and $\rho_{0k} \in H^{s+1}(\Om, \RR)$ be families of 
functions parameterized by $k$. Also let $p_k$ be a family of smooth functions satisfying the 
Assumption \ref{assumption_on_rho}. Assume that

1) For each $k$, $u_{0k},~\rho_{0k}$ satisfy compatibility conditions up to order $s$.

2) There exists a constant $a_2$ such that

\hskip 1cm a) $\parallel u_{0k} \parallel_s \leq a_2$

\hskip 1cm b) $\de u_{0k} \in H^s(\Om, \RR)$ and $\parallel \de u_{0k} \parallel_s \leq \frac{a_2}{\sqrt{k}}$

\hskip 1cm c) $\parallel f_{0k} \parallel_{s+1} \leq \frac{a_2}{k}$

where $f_{0k} = \log\rho_{0k}$.

Then there exists a number $k_0$ and a positive function $T(k)$ such that if $k > k_0$ the 
system (\ref{eq_comp_u})-(\ref{eq_comp_rho}) with initial conditions $\rho_{0k},~u_{0k}$ and with boundary condition 
(\ref{bc_comp}) has a unique $H^s$ solution $u_k(t),~\rho_k(t)$ defined on a time interval $[0,T(k))$. 
Furthermore $\rho_k(t) \in H^{s+1}$ and $\de u_k(t) \in H^s$.
\label{first_big_theo}
\end{theo}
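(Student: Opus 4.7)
The plan is to combine the standard local existence for the quasilinear symmetric hyperbolic system (\ref{eq_comp_u})--(\ref{eq_comp_rho}) from \cite{E3} with energy estimates for the convected wave equation (\ref{eq_f_convected}) in order to recover the extra spatial regularity of $\rho_k$ claimed in the statement. First I would invoke the cited local existence result to obtain, for each sufficiently large $k$, a unique $H^s$ solution $(u_k,\rho_k)$ on some maximal interval $[0,T(k))$, with $u_k(t) \in H^s(\Om,\RR^n)$ and $\rho_k(t) \in H^s(\Om,\RR)$. What must then be upgraded is the regularity of the density by one derivative and, equivalently, the regularity of $\de u_k$ from $H^{s-1}$ to $H^s$. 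Both upgrades will come from the wave equation $\ddot f = -\de c^2 \nabla f + u^i_j u^j_i$ together with the Neumann boundary condition (\ref{normal_der_f}) and the identity $\dot f = -\de u$.

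The core of the argument is an energy estimate adapted to the large parameter $c^2 \sim k$. I would define a scaled energy roughly of the form
\begin{gather}
E_s(t) = \| f(t) \|_{s+1}^2 + \tfrac{1}{k}\| \dot f(t) \|_{s}^2,
\nonumber
\end{gather}
and more generally control tangential and normal derivatives of $f$ up to order $s+1$ by commuting (\ref{eq_f_convected}) with differential operators of order up to $s$. Multiplying the resulting commuted equation by the material derivative of its left-hand side and integrating over $\Om$ gives the standard wave-type identity; the cross terms involving $u$, $\nabla u$ and $\nabla c$ are handled by the product estimate (\ref{product_estimate}) and Sobolev embedding, while the boundary contributions produced by integration by parts against $\de c^2 \nabla(\cdot)$ are absorbed using the Neumann condition (\ref{normal_der_f}), the trace inequality (\ref{restrictions_inequality}), and the second-fundamental-form identities behind (\ref{another_der_first_ord})--(\ref{second_ord_comp_cond}). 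The compatibility conditions up to order $s$ ensure that the time derivatives $\partial_t^j f(0)$, reconstructed from the equations, lie in the appropriate Sobolev spaces with norms controlled by $a_2/k$ and $a_2/\sqrt{k}$, as is required to start the estimate.

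Once $E_s(t)$ is shown to satisfy a Gronwall-type differential inequality whose coefficients depend only on $\| u_k(t) \|_s$ and on Assumption \ref{assumption_on_rho}, a bootstrap argument provides a lower bound $T(k) > 0$ on which $E_s(t)$ stays comparable to $E_s(0) \lesssim a_2^2/k^2 + a_2^2/k$, in particular remains small. This simultaneously yields $\rho_k(t) = \exp f_k(t) \in H^{s+1}$ and $\de u_k(t) = -\dot f_k(t) \in H^s$, upgrading the $H^s$ solution supplied by \cite{E3}; uniqueness is inherited from the same cited result.

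The hardest step is the boundary analysis. The Neumann condition (\ref{normal_der_f}) couples $\nabla_\nu f$ to $\langle \nabla_u u, \nu \rangle$, and this coupling varies with $t$, so when one commutes the wave operator with $\partial_t$ or with tangential derivatives of order $s$, the resulting inhomogeneous Neumann data must be estimated in $H^{s-1/2}(\partial\Om)$ uniformly in $k$. The higher-order compatibility conditions are tailored precisely so that these boundary contributions are finite at $t=0$; propagating that control in $t$ is the main technical obstacle and is where the scalings $\| f_{0k}\|_{s+1} \le a_2/k$ and $\| \de u_{0k}\|_s \le a_2/\sqrt{k}$ become essential, since a bare $O(1)$ bound would prevent closing the estimate for the $\de c^2 \nabla f$ term when $c^2 \sim k$.
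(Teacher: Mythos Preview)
Your outline captures the right ingredients---local existence from \cite{E3}, the convected wave equation (\ref{eq_f_convected}) with Neumann data (\ref{normal_der_f}), and an energy argument exploiting $c^2\sim k$---but it omits a step that the paper treats as essential, and your energy is organized differently from the paper's in a way that matters for the boundary.

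The missing step is a regularization of the initial data. The solution supplied by \cite{E3} has only $\rho_k(t)\in H^s$, so for $t>0$ the quantity $\nabla^{s+1}f(t)$ is not known to exist, and the integrations by parts and commutator manipulations in your energy estimate are formal. The paper closes this gap by constructing a sequence $(u_n,\rho_n)\to(u_{0k},\rho_{0k})$ in $X^{3,4,4}$ with $u_n\in H^{s+1}$ \emph{and still satisfying the compatibility conditions up to order $s$}. This last requirement is the delicate part: the compatibility conditions are nonlinear constraints $\widetilde\Phi(u,\rho)=0$ (see (\ref{tilde_Phi})), and naive mollification destroys them. The paper shows $D\widetilde\Phi$ is surjective with a uniformly bounded right inverse and runs a quantitative implicit-function-theorem argument to project smoothed data back onto the constraint set. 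Only then can one invoke \cite{E3} at one higher regularity, apply the a~priori estimates of section~\ref{estimates} uniformly in $n$, and pass to the limit by weak compactness. Your proposal does not mention this, and without it the bootstrap is circular.

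On the energy itself: you propose commuting (\ref{eq_f_convected}) with spatial derivatives of order $s$ and controlling $\|f\|_{s+1}^2+\tfrac1k\|\dot f\|_s^2$ directly. The paper instead commutes with \emph{material time derivatives}, building $E=\int|c\nabla\dddot f|^2+|L\ddot f|^2$, and then recovers the spatial norms $\|f\|_{s+1}$, $\|\dot f\|_s$, $\|\ddot f\|_{s-1}$ via elliptic regularity for $L=-\de c^2\nabla$ with Neumann data (inequality (\ref{basic_L_estimate})). The advantage is that the material derivative is tangent to $\partial\Om$ (since $\langle u,\nu\rangle=0$), so differentiating the boundary condition (\ref{normal_der_f}) in $t$ keeps it a Neumann condition, whereas commuting with a full set of spatial derivatives forces you to handle normal derivatives of $f$ on $\partial\Om$ at top order, which you have not explained how to control. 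Your remark that ``the hardest step is the boundary analysis'' is correct, but the paper's choice of commuting fields is precisely what makes that analysis tractable.
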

\begin{rema}
 Most of this theorem was proven in \cite{E3}. It is only the last statement, $\rho_k \in H^{s+1}$ and $\de u_k \in H^s$ 
that is new, for in \cite{E3} we only got $\rho_k \in H^s$. We shall see the significance of the additional differentiability
in section \ref{diff_dep}.
\end{rema}
\begin{theo}
Assume that $u_{0k},~\rho_{0k}$ and $p_k$ are as above and satisfy all hypothesis of Theorem \ref{first_big_theo}. Assume 
also that there exists a $v_0 \in H^s(\Om,\RR^n)$ such that $u_{0k} \rar v_0$ in $H^s(\Om,\RR^n)$ as $k\rar \infty$. Then 
there exists an interval $[0,T]$ and a unique curve $v:[0,T] \rar H^s(\Om,\RR^n)$ 
satisfying (\ref{eq_incomp_v})-(\ref{eq_incomp_rho}) with initial condition $v_0$ and boundary 
condition (\ref{bc_incomp}). Furthermore if $u_k(t)$, $\rho_k(t)$ are the solutions of 
(\ref{eq_comp_u})-(\ref{eq_comp_rho}) from theorem \ref{first_big_theo}, then if $T(k)$ is maximal, we 
find that $T(k) > T$ for large $k$ and $u_k(t) \rar v(t)$ in $H^s(\Om,\RR^n)$ as $k \rar \infty$. 
Also $\rho_k(t) \rar 1$ in $H^{s+1}(\Om,\RR)$.
\label{second_big_theo}
\end{theo}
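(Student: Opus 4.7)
The plan is to (i) derive a priori estimates for the compressible solutions $u_k,\rho_k$ that are uniform in $k$ on a common time interval $[0,T]$, (ii) extract a limit $v$ and verify that it solves the incompressible system, and (iii) use uniqueness of the incompressible solution together with an energy estimate for the difference to upgrade subsequential convergence to full convergence of the whole family and from weak to strong in $H^s$.

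For (i), setting $f_k=\log\rho_k$, the hypotheses give $\parallel f_{0k}\parallel_{s+1}\leq a_2/k$ and, by (\ref{f_dot_div_u}), $\parallel \dot f_k(0)\parallel_s = \parallel \de u_{0k}\parallel_s \leq a_2/\sqrt{k}$. My aim is to show that these scales persist: on a fixed interval $[0,T]$ independent of $k$ and for $k$ large,
\[
\parallel f_k(t)\parallel_{s+1} \leq C/k, \qquad \parallel \dot f_k(t)\parallel_s \leq C/\sqrt{k}, \qquad \parallel u_k(t)\parallel_s \leq C.
\]
The first two bounds I would obtain from energy estimates applied to the convected wave equation (\ref{eq_f_convected}) with the inhomogeneous Neumann condition (\ref{normal_der_f}), exploiting that $c^2 = p_k^\prime(\rho_k)\sim k$ provides a large coefficient. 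For $u_k$ I decompose $u_k=w_k+\nabla g_k$ with $w_k=P(u_k)$: the gradient part $\nabla g_k = Q(u_k)$ is determined elliptically by $\de u_k = \dot f_k$ and the boundary condition $\langle u_k,\nu\rangle=0$, hence $\parallel \nabla g_k\parallel_s = O(1/\sqrt{k})$, while $w_k$ satisfies (\ref{eq_w}) whose inhomogeneity $-P(\nabla_{w_k}\nabla g_k)$ is small, so a standard $H^s$ energy estimate of incompressible-Euler type yields $\parallel w_k\parallel_s\leq C$. Once these bounds are in hand, the continuation criterion of Theorem \ref{first_big_theo} immediately forces $T(k)>T$ for all large $k$.

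For (ii) and (iii), uniform $H^s$ control of $u_k$ together with $H^{s-1}$ control of $\partial_t u_k$ from (\ref{eq_comp_u}) yields, via Aubin--Lions, strong subsequential convergence $u_k\rar v$ in $C([0,T];H^{s'})$ for every $s'<s$ and weak-$*$ convergence in $L^\infty([0,T];H^s)$. Since $\de u_k=\dot f_k\rar 0$, the limit satisfies $\de v=0$; passing to the limit as $k\rar\infty$ in (\ref{eq_w}) identifies $v$ as an $H^s$ solution of (\ref{eq_v_no_p}) with initial datum $v_0$, and classical uniqueness for incompressible Euler in $H^s$ (for a domain with boundary) forces convergence of the full family. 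To lift $H^{s'}$-convergence to $H^s$-convergence I would run an energy estimate for the difference $w_k-v$ using (\ref{eq_w}) and (\ref{eq_v_no_p}), whose forcing involves only quantities controlled by $\nabla g_k$ and $\dot f_k$, both of which tend to zero in $H^s$. The statement $\rho_k = e^{f_k}\rar 1$ in $H^{s+1}$ then follows from $\parallel f_k\parallel_{s+1}=O(1/k)$ together with the product estimate (\ref{product_estimate}) applied to the Taylor expansion of $e^{f_k}$.

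The principal obstacle is the uniform-in-$k$ estimate on $(f_k,\dot f_k)$. Equation (\ref{eq_f_convected}) has characteristic speeds of order $\sqrt{k}$, so a naive Gr\"onwall argument would shrink the time of existence to $O(1/\sqrt{k})$; one must instead use a $k$-weighted energy of the form
\[
E_s(t) \sim \parallel \dot f_k\parallel_s^2 + \parallel c\,\nabla f_k\parallel_s^2,
\]
for which $\tfrac{d}{dt}E_s\leq C E_s + (\text{lower order})$ with $C$ independent of $k$, provided the commutators of high derivatives with the material derivative and with $\de c^2\nabla$ are carefully absorbed into the bulk energy, and the boundary contributions arising when integrating by parts against the inhomogeneous Neumann data (\ref{normal_der_f}) are handled via the restriction inequality (\ref{restrictions_inequality}) and the compatibility conditions (\ref{zeroth_ord_comp_cond})--(\ref{second_ord_comp_cond}) applied to higher-order time derivatives. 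Extracting in the same scheme the extra regularity $\rho_k\in H^{s+1}$, rather than merely $H^s$, is the new point announced in the Introduction and in Theorem \ref{first_big_theo}, and it is precisely that refinement that supplies the $\parallel f_k\parallel_{s+1}=O(1/k)$ bound invoked above.
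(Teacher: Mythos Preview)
Your programme for part (i) is essentially the paper's Section~\ref{estimates}: the $k$-weighted energy for the convected wave equation (\ref{eq_f_convected}) does yield $\n f_k\n_{s+1}=O(1/k)$ and $\n\dot f_k\n_s=O(1/\sqrt{k})$ on a fixed interval, and from this the decomposition $u_k=w_k+\nabla g_k$ with $\nabla g_k=\nabla\Delta^{-1}\dot f_k$ small is exactly how the paper controls $u_k$.

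Where you diverge from the paper is in (ii)--(iii), and here there is a genuine gap. The paper does \emph{not} use Aubin--Lions plus an Eulerian $H^s$ energy estimate for $w_k-v$; it passes to Lagrangian coordinates (Theorem~\ref{equiv_second_big_theo}) and compares the integral equations (\ref{int_eq_eta}) and (\ref{int_eq_zeta}) for $\dot\eta$ and $\dot\zeta_k$, using that the map $Z(\xi,\al)=(Q(\nabla_{\al\circ\xi^{-1}}P(\al\circ\xi^{-1})))\circ\xi$ is smooth, hence locally Lipschitz, from $\mathscr{D}^s\times H^s$ to $H^s$. This is not a matter of taste: your proposed $H^s$ energy estimate for $w_k-v$ fails as written. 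Subtracting (\ref{eq_w}) from (\ref{eq_v_no_p}) produces, besides the transport term $P(\nabla_{u_k}(w_k-v))$ and the small forcing in $\nabla g_k$, the term $P(\nabla_{w_k-v}v)$. To bound $\n\nabla_{w_k-v}v\n_s$ via the product estimate (\ref{product_estimate}) you would need $\nabla v\in H^s$, i.e.\ $v\in H^{s+1}$, which is unavailable. Unlike the transport term, this one cannot be handled by integration by parts because the derivative falls on the \emph{known} function $v$, not on the difference. So the Gr\"onwall loop you describe closes only in $H^{s-1}$, not in $H^s$, and your compactness step already gave $H^{s-1}$ convergence; nothing is gained.

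The Lagrangian reformulation is precisely what removes this derivative loss: in $(\zeta,\dot\zeta)$ variables the equation becomes an ODE on $\mathscr{D}^s\times H^s$ with smooth right-hand side (this is the Ebin--Marsden mechanism), so a standard Lipschitz/Gr\"onwall argument on (\ref{difference_eta_dot_zeta_dot_k_iteration}) gives $\dot\zeta_k\to\dot\eta$ in $H^s$ directly, and $u_k\to v$ in $H^s$ follows by continuity of composition. If you wish to stay Eulerian you would need a genuinely different device (e.g.\ a Bona--Smith regularisation argument to recover the top norm), which your proposal does not supply.
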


A consequence of this theorem is that the solution to the slightly compressible motion (in Lagrangian coordinates) depends differentiably on the initial data. To state this precisely requires additional constructions so we delay both the detailed statement and proof of this fact until sections \ref{diff_dep} and \ref{diff_dep_quasi}. We point out that Theorem \ref{second_big_theo}
can be given a simple geometric interpretation in terms of convergence
of curves in appropriate Banach manifolds (see Theorem \ref{equiv_second_big_theo}).
This fits with the general study of motions with a strong constraining force
\cite{E2}. Along these lines, recently the authors have proven similar
convergence results  in the context of the free boundary Euler equations
in two \cite{DE} and three \cite{DE2} dimensions, where the relevant parameter is the coefficient of 
surface tension.

\begin{rema}
 All of Theorem \ref{second_big_theo} except the last statement about $\rho$ was proven in \cite{E4}.  The solution $v(t)$ was constructed in \cite{EM} and has also been dealt with by other 
authors (cf. \cite{K1}, \cite{K3} or \cite{G} for example).
\end{rema}

\begin{rema}
As discussed in the introduction, the pressure of an incompressible fluid is determined
entirely by the velocity and will in general not be close to the pressure of the (more realistic) slightly compressible
fluid. However, convergence in an averaged sense does hold. 
 This becomes clear when one notices that the convergence $u_k \rar v$ and
$\rho_k \rar 1$ stated in Theorem \ref{second_big_theo} corresponds in Lagrangian coordinates to
the convergence of $(\zeta_k, \dot{\zeta}_k)$ to $(\eta, \dot{\eta})$,
where $\zeta_k$ is the flow of $u_k$ and $\eta$ the flow of $v$ (see equations (\ref{eq_eta}), (\ref{eq_zeta}),
and Theorem \ref{equiv_second_big_theo}). The convergence of the time derivatives of the flows thus ensures
(using the equations of motion) that for $t_1<t_2$
\begin{gather}
\int_{t_1}^{t_2} \nabla p_k \circ \zeta_k \rar \int_{t_1}^{t_2} \nabla p_0 \circ \eta.
\nonumber
\end{gather}
\label{remark_pressure_dist}
\end{rema}

Below we discuss the hypotheses and conclusions of Theorems \ref{first_big_theo} and \ref{second_big_theo}, in particular 
the differences with the results of \cite{E4}. A brief review of the literature is also presented.
In  section \ref{estimates} we will derive the basic estimates required for the proofs of these theorems, and the proofs 
themselves will follow in section \ref{proofs}.

\subsection{Discussion and brief review of the literature\label{review}}

Since, as mentioned, this work is a continuation of \cite{E4},
a comparison with those results seems appropriate. We start pointing out the differences in the assumptions
of Theorems \ref{first_big_theo} and \ref{second_big_theo} to the theorems in \cite{E4}.

In \cite{E4}, it was assumed that: 

(a) $u_0$ and $\rho_0$ are in $H^s$;

(b) the compatibility conditions hold up to order $s-1$;

(c) $\dive ( u_0 )$ is small in $H^{s-1}$, but nothing is said about $\dive ( u_0 )$
being in $H^s$;

(d) $f_0$ is small in $H^s$.

\noindent In the present work we have: 

(A) $u_0$ is in $H^s$ and $\rho_0$ is in $H^{s+1}$;

(B) the compatibility conditions hold up to order $s$;

(C) $\dive ( u_0 )$ is in $H^s$ and is small in $H^s$;

(D) $f_0$ is small in $H^{s+1}$. 

\noindent The reasons for the differences (a)-(A) and (d)-(D) are clear: our goal is to prove that
$\rho(t)$ has one extra derivative if $\rho_0 = \rho(0)$ does. Because we are assuming an additional derivative
on $\rho_0$, we require an extra compatibility condition, which explains the difference 
(b)-(B). 
(C) is required rather than (c) because in order to estimate $f$ in $H^{s+1},$ we need to have $\dot{f}(0) \in H^s$ and $\dot{f}(0)= \dive(u_0)$. 

Note that our assumptions for $f$ are rather standard, since $f$ satisfies a convected wave equation with inhomogeneous term, i.e., 
equation (\ref{eq_f_convected}). Since we want $f$ to be in $H^{s+1}$, the standard theory of wave equations
suggests that the initial data $(f(0),\dot{f}(0))$ should be taken in $H^{s+1} \times H^s$ which is what we are requiring.

The smallness conditions
in  (C) and (D) are needed because we need to know that the initial data for $f$ are small in order to guarantee that (\ref{eq_f_convected}) has a small solution. With this and the assumption that $u_{0k}$ is close to $v_0,$ we show that the  solutions to
(\ref{eq_comp}) are close to those of (\ref{eq_incomp}). 

In the introduction we mentioned that the differentiability of the data-to-solution map in Lagrangian coordinates
(which is presented in sections \ref{diff_dep} and \ref{diff_dep_quasi}) is unexpected in light 
of known results on the theory of quasi-linear hyperbolic differential equations. (In fact, the 
result is false in Eulerian coordinates).
At first sight, one might think that the result would not be that unexpected since the system in 
Lagrangian coordinates can be loosely thought as an ordinary differential equation. However, note that in Lagrangian coordinates the equation for the flow can be written
$$\ddot{\zeta} = (c^2 \nabla f) \circ \zeta$$
which is a second order ODE on $\mathscr{D}^s$
provided that $\nabla f$ is in $H^s.$ But this requires $f$ (and thus $\rho)$ to be in $H^{s+1}.$

In order to obtain the additional smoothness of $f$, we make use of the extra regularity
of the density function obtained in Theorem \ref{first_big_theo}. We notice that this extra regularity
holds in Eulerian coordinates, and it probably fails in Lagrangian coordinates; indeed,  
to say that this extra regularity holds in Lagrangian coordinates means that  $\rho \circ \zeta$ is in $H^{s+1}$,
but this is unlikely since $\zeta$ is only in $H^s$. Summarizing, we obtain first a new result in Eulerian
coordinates, namely, the extra smoothness of $\rho$ (which is probably false in Lagrangian coordinates). We then
use this new result in Eulerian coordinates to establish a second new result, but now in Lagrangian coordinates,
namely, the $C^1$ dependence on the initial data (and this second new result is false in Eulerian coordinates).
This illustrates the interplay between Lagrangian and Eulerian coordinates that is one of the key elements of the paper.

Thus our results explore good properties of both Eulerian and Lagrangian coordinates, getting additional smoothness in the former and differentiable dependence on initial data in the latter.

Regarding the methods employed in the proofs in the present paper, in comparison with \cite{E4}, 
they fall basically into two types: (i) Those that are entirely new, i.e.,
addressing the results announced but not proven in \cite{E4}; and (ii) those that are similar to \cite{E4}. 
Naturally, (i) is the main interest of our paper, but we note that because of the stronger results 
proven here (namely the extra regularity of the density
and the differentiability of the data-to-solution map), even those parts based on \cite{E4} had to be re-worked.
In particular, the estimates of section \ref{estimates} are more delicate than those for the corresponding function $f$
in \cite{E4}, even if reminiscent of \cite{E4}. 

We now make some brief comments on previous works that have dealt with the incompressible limit
of the Euler equations. There have been many works on this subject and a complete review of the literature
is quite impractical. We refer the reader to the monograph \cite{Ma} for a longer discussion of the problem, 
and here mention only a few of the extensive list of works on the topic.

Some works that we are aware of that treat the incompressible limit of the
Euler equations in a domain with boundary in $\RR^n$  are  \cite{AlazardIncompWall, ChengLowMach, ChengImprovedInc, S}.
When the equations in the whole of $\RR^n$ 
or $\TT^n$ are considered,
the incompressible limit has also been investigated in 
\cite{KM1, KM2, MS1} (the results in \cite{KM1, KM2} are reproved in a simpler fashion in \cite{Ma}; our second author
also treats the $\TT^n$  case in \cite{E2}).
From the point of view of the convergence of solutions, the main difference between these and our results (with or without 
boundary) is that we obtain the convergence in the same space where solutions live, whereas in 
\cite{AlazardIncompWall, ChengLowMach, ChengImprovedInc, KM1, KM2, Ma, MS1, S} solutions are obtained in $H^s$
 but convergence is established only in $H^{s-\de}$, for some 
$\de > 0$. We believe that the main reason for this difference is our use of Lagrangian coordinates, while 
the aforementioned authors work mostly in Eulerian coordinates.
We avoid a full discussion of all similarities and differences between these works and ours, but point out that 
some of these authors employ a more general equation of state than we do, allowing the inclusion of entropy in the system.
We also mention the related works \cite{MS2, UkaiIncompressible}. A somewhat intermediate situation between
the study of the equations in $\RR^n$ and in a bounded domain can be found in \cite{Secchi}, where the author considers
the half-plane in two dimensions and carries out a detailed analysis of the time of existence of solutions. 
In fact, in two dimensions the limiting system (the incompressible system) is globally well-posed (in time). Thus,
it is natural to ask whether the compressible system, being close to the incompressible one, is well-posed for large time.
This is the question addressed in \cite{Secchi}.

\section{Estimates for $f$ and $\dot{f}$ \label{estimates}}

Now we shall consider $f$, the solution of (\ref{eq_f_convected}), and find estimates for $\parallel f (t)\parallel_{s+1}$ and
$\parallel \dot{f}(t)\parallel_s$. These estimates will be similar to the
 estimates of \cite{E2} \S 12, but the method
of estimation is simpler because it avoids operator theoretic techniques. Also we will get more information about the 
density function. For example our estimate of $\parallel f \parallel_{s+1}$ gives an 
estimate of the $H^{s+1}$-norm of $\log\rho$ and hence of $\rho$. With this we will show that $\rho$ is actually 
smoother than the velocity $u$.

For simplicity of exposition we shall restrict ourselves to the case $s=3$ and $n=2$ or $3$. The method 
can be used for arbitrary $s$ and $n$, (provided $s>\frac{n}{2}+1$), but the expressions are more cumbersome. Also, the above 
case seems to be the one of primary importance.

We shall assume that we are given a family of functions $\{ \rho_k \}$ which satisfies Assumption \ref{assumption_on_rho} and 
that for each $k$ we have $u_k$ and $\rho_k$ which satisfy (\ref{eq_comp_u})-(\ref{eq_comp_rho}) and the boundary 
condition (\ref{bc_comp}) on some interval $[0,T]$. We further assume that $f_k = \log \rho_k \in H^4(\Om)$, and that
\begin{gather}
\parallel u_{k}(0) \parallel_3 \leq a_2,~
\parallel \de u_{k}(0) \parallel_3 \leq \frac{a_2}{\sqrt{k}} \text{ and }
\parallel f_{k}(0) \parallel_{4} \leq \frac{a_2}{k}
\label{hypothesis_at_0}
\end{gather}
as is assumed in theorem \ref{first_big_theo}.

Also we assume that there exists a constant $a_3$  such that
\begin{assump}
\begin{gather}
 \parallel u_k(t) \parallel_3 \leq a_3\,\, \text{ for all } \,\, 
  t \in [0,T], \label{extra_assump_u} 
\end{gather}
\label{extra_assumption_u_statement}
\end{assump}
\noindent and another  constant and $a_4$ such that
\begin{assump} 
 \begin{subequations}
 \begin{align}
 & \parallel f_k(t) \parallel_4 \leq \frac{a_4}{\sqrt{k}}, \label{extra_assump_f} \\
& \parallel \dot{f}_k(t) \parallel_3 \leq a_4, \label{extra_assump_f_dot}\\
& \parallel \ddot{f}_k(t) \parallel_2 \leq a_4\sqrt{k}, \label{extra_assump_f_ddot}\\
& \parallel \dddot{f}_k(t) \parallel_1 \leq a_4 k, \label{extra_assump_f_dddot}
\end{align}
 \end{subequations}
for $t \in [0,T]$.
\label{assump_on_f}
\end{assump}
As we shall see, this latter assumption can eventually be discarded. With these assumptions we will derive estimates which hold
for sufficiently large $k$. We will in fact find a constant $a_5$ such that for large $k$
\begin{subequations}
 \begin{align}
&  \parallel f_k(t) \parallel_4 \leq \frac{a_5}{k}, \label{ineq_a_5_f} \\
& \parallel \dot{f}_k(t) \parallel_3 \leq \frac{a_5}{\sqrt{k}}, \label{ineq_a_5_f_dot} \\
& \parallel \ddot{f}_k(t) \parallel_2 \leq a_5 \label{ineq_a_5_f_ddot}, \\
& \parallel \dddot{f}_k(t) \parallel_1 \leq a_5  \sqrt{k} \label{ineq_a_5_f_dddot},
 \end{align}
\end{subequations}
for $t \in [0,T]$.

Our method will be to derive a type of energy estimate for $f$, and its first material derivatives.
Let
\begin{gather}
L= -\de c^2 \nabla,
\nonumber
\end{gather}
and 
\begin{gather}
F = u^i_j u^j_i ,
\nonumber 
\end{gather}
 so that (\ref{eq_f_convected}) be written as
\begin{gather}
 \ddot{f} = Lf + F .
\label{eq_f_estimates}
\end{gather}
Then let 
\begin{gather}
 E(t) = \int_\Om \Big (  |c\nabla \dddot{f}(t)|^2 + |L \ddot{f}(t)|^2 \Big ).
\label{def_energy}
\end{gather}
We consider an energy $E(t)$ with  three material derivatives since we are doing the analysis for the case $s$ equal to 3.     
$E(t)$ will give us estimates for $\parallel \dddot{f}(t) \parallel_1$ and $\parallel \ddot{f}(t) \parallel_2$, and using
these and (\ref{eq_f_estimates}) we can get estimates for $f$ and $\dot{f}$.

We now begin the estimate of $E(t)$.
We shall use 
\begin{gather}
 E(t) = E(0) + \int_0^t \frac{dE(s)}{ds} \, ds,
\label{E_fund_thm_calc}
\end{gather}
but in order to do so we must compute $\frac{dE}{dt}$. Differentiating $E$ we find
\begin{gather}
 \frac{1}{2}\frac{dE}{dt} = \int_\Om \Big ( \langle \partial_t(c\nabla \dddot{f}),c\nabla \dddot{f} \rangle 
+ (\partial_t L \ddot{f} ) (L \ddot{f}) \Big ),
\nonumber
\end{gather}
but 
\begin{gather}
 \partial_t(c\nabla \dddot{f} ) = (\partial_t c) \nabla \dddot{f} + c \nabla \partial_t \dddot{f} 
= (\partial_t c) \nabla \dddot{f} - c \nabla \nabla_u  \dddot{f} +  c \nabla \ddddot{f}
\nonumber
\end{gather}
and similarly
\begin{gather}
 \partial_t L\ddot{f} = -\de\partial_t(c^2) \nabla \ddot{f} - L \nabla_u \ddot{f} + L \dddot{f} .
\nonumber
\end{gather}
Therefore
\begin{gather}
 \frac{1}{2} \frac{dE}{dt} = 
\int_\Om \Big ( \langle c\nabla \dddot{f},c\nabla \ddddot{f} \rangle 
+ (L \dddot{f} ) (L \ddot{f}) \Big ) + R_1
\nonumber
\end{gather}
where 
\begin{gather}
 R_1 = \int_\Om \Big ( \langle (\partial_t c) \nabla \dddot{f} - c \nabla \nabla_u  \dddot{f}, c\nabla \dddot{f} \rangle
-(\de\partial_tc^2\nabla \ddot{f})L\ddot{f} - (L\nabla_u \ddot{f})L\ddot{f} \Big ).
\label{definition_R_1}
\end{gather}
Integrating by parts we find that
\begin{gather}
 \frac{1}{2} \frac{dE}{dt} = 
\int_\Om  \langle c\nabla \dddot{f},c\nabla \ddddot{f} - c \nabla L \ddot{f} \rangle 
+ \int_{\partial \Om} (c^2 \nabla_\nu \dddot{f})L\ddot{f} + R_1 \nonumber \\
= I_1 + B_1 + R_1 .
\nonumber
\end{gather}
We proceed to evaluate $I_1$. First we take two material derivatives of equation (\ref{eq_f_estimates}). Letting $L_1$ be the operator
defined by
\begin{gather}
 L_1 h = (Lh)\dot{} - L\dot{h}
\label{definition_L_1}
\end{gather}
and computing directly we find
\begin{gather}
 (Lf)\ddot{} = L\ddot{f} + L_1\dot{f} + (L_1 f)\dot{}
\nonumber
\end{gather}
so we get
\begin{gather}
 \ddddot{f} = L\ddot{f} + L_1\dot{f} + (L_1 f)\dot{} + \ddot{F} .
\label{f_4_dots_L_1}
\end{gather}
Therefore
\begin{gather}
I_1 = \int_\Om \langle c\nabla \dddot{f}, c\nabla(L_1\dot{f} + (L_1 f)\dot{} + \ddot{F} )\rangle .
\nonumber
\end{gather}
We now calculate $L_1$. Using (\ref{definition_L_1}) we find
\begin{gather}
 L_1 = -\de \partial_t(c^2) \nabla + [\nabla_u,L] = -\de (c^2)\dot{}\,\nabla - [\nabla_u, \de] c^2 \nabla -\de c^2 [\nabla_u,\nabla]
\label{computing_L_1}
\end{gather}
where $[~,~]$ denotes the commutator.

Of course the commutators on the 
rightmost side of (\ref{computing_L_1}) are first order operators whose coefficients are first 
derivatives of $u$. Therefore $L_1$ is a second order operator whose coefficients depend on (derivatives of) $c$ and $u$.

In order to make estimates with such operators we will need several formulas and also a convenient way to bound many 
different expressions. For the latter purpose we introduce a generic constant $K$. It will have different values in different 
expressions but in each case it will depend only on $\Om$ and $\{a_i | i=1,\dots,4\}$.

To begin our formulas we note that from (\ref{eq_comp_u}) and the definition of $c^2$
\begin{gather}
 \dot{u} = \partial_t u + \nabla_u u = -c^2 \nabla f .
\nonumber
\end{gather}
Applying $\partial_j$ to this we get
\begin{gather}
 (u^i_j)\dot{} = -(c^2f_i)_j - u^k_ju^i_k .
\label{u_i_j_dot}
\end{gather}
For any function $h$ a direct computation gives
\begin{gather} 
 (\partial_i h)\dot{} = \partial_i (\dot{h}) - u^j_i\partial_j h .
\label{direct_comp_any_h_der}
\end{gather}
Applying (\ref{direct_comp_any_h_der}) to $f$ we find
\begin{gather}
 \ddot{u}^i = -(c^2)\dot{} \, \partial_i f - c^2 \partial_i (\dot{f}) + c^2 u^j_i f_j,
\nonumber
\end{gather}
and 
\begin{gather}
 \dddot{u}^i = - (c^2)\ddot{}\,\partial_i f -2(c^2)\dot{} \, (\partial_i(\dot{f}) - u^j_i f_j )
- c^2 \big ( \partial_i (\ddot{f}) -2u^j_i(\dot{f})_j - \big ( (c^2f_i)_j + 2u_i^ku_k^j)f_j \big) .
\label{u_i_3_dots}
\end{gather}
Thus we have expressions for material derivatives of $u$. We shall need formulas for material derivatives of $c^2$ as well. 
From the definition $c^2 = p^\prime(\rho)$ and the equation $\dot{f} = \frac{\dot{\rho}}{\rho}$ we find
\begin{gather}
 (c^2)\dot{} = p^{\prime\prime}(\rho) \dot{\rho} = p^{\prime\prime}(\rho)\rho \dot{f}.
\label{c2dot}
\end{gather}
Thus
\begin{gather}
 (c^2)\ddot{} = p^{\prime\prime\prime}(\rho)(\rho \dot{f})^2 + p^{\prime\prime}(\rho)(\rho \ddot{f} + \rho^2\dot{f}^2) ,
\label{c2ddot}
\end{gather}
and
\begin{gather}
 (c^2)\ddot{}\,\,\dot{} = p^{\prime\prime\prime\prime}(\rho)(\rho \dot{f})^3 +
 3 p^{\prime\prime\prime}(\rho) (\rho \dot{f})(\rho\ddot{f} + \rho^2\dot{f}^2) +
p^{\prime\prime}(\rho)(\rho \dddot{f} + \rho \dot{f} \ddot{f} + 2 \rho^2 \dot{f}\ddot{f} + 2 \rho^3 \dot{f}^3).
\label{c2dddot}
\end{gather}
Since $p^\prime(1) = k$, $c^2 = k + \int_1^\rho p^{\prime\prime}(\la)d\la$. Also, since $\parallel f \parallel_4 \leq
\frac{a_4}{\sqrt{k}}$, $\rho = e^f$, and $|p^{\prime\prime}|\leq a_1k$ we find
\begin{gather}
 \parallel c^2 - k \parallel_4 \leq K\sqrt{k} .
\label{estimate_c2_k}
\end{gather}
Then using the formulas (\ref{c2dot})-(\ref{c2dddot}) we get the estimates
\begin{gather}
 \parallel (c^2)\dot{} \parallel_3 \leq Kk, \label{estimate_c2_dot} \\
\parallel (c^2)\ddot{} \parallel_2 \leq Kk(\sqrt{k} + \parallel \ddot{f} \parallel_2 ) \leq K k^\frac{3}{2}, \label{estimate_c2_ddot}  \\
\parallel (c^2)\ddot{}\,\,\dot{} \parallel_1 \leq Kk(\sqrt{k} + \parallel \dddot{f} \parallel_1 ) \leq K k^2. \label{estimate_c2_dddot} 
\end{gather}
Using these estimates and the formulas for derivatives of $u$ we get
\begin{gather}
 \parallel \dot{u} \parallel_3 \leq K k \parallel f \parallel_4 \leq K \sqrt{k}, \label{estimate_u_dot} \\
\parallel \ddot{u} \parallel_2 \leq K (\sqrt{k} + k\parallel \dot{f} \parallel_3 ) \leq K k, \label{estimate_u_ddot}  \\
\parallel \dddot{u} \parallel_1 \leq K (k + k\parallel \ddot{f} \parallel_2 
+ k^\frac{3}{2} \parallel f \parallel_4 ) \leq K k^\frac{3}{2}. \label{estimate_u_dddot} 
\end{gather}
Using (\ref{computing_L_1}) with (\ref{product_estimate}) and then with  (\ref{extra_assump_u}), 
(\ref{estimate_c2_k}), (\ref{estimate_c2_dot}) we get
\begin{gather}
 \parallel L_1 \dot{f} \parallel_1 
\leq K (\parallel u \parallel_3 \parallel c^2 \parallel_3 + \parallel (c^2)\dot{} \parallel_2 ) \parallel \dot{f} \parallel_3 
\leq K k \parallel \dot{f} \parallel_3 .
\label{first_step_I_1}
\end{gather}
This is the first step in the estimate of $I_1$. In order to estimate $(L_1 f)\dot{}$ we will need the following additional 
formula:
\begin{gather}
 ( [\nabla_u, \partial_j] h )\dot{} = ( (c^2 f_i)_j + u_j^k u_k^i) h_i - u_j^i (h_i)\dot{} 
= ( (c^2 f_i)_j + 2u_j^k u_k^i) h_i - u_j^i (\dot{h})_i
\label{additional_formula}
\end{gather}
which follows from  (\ref{u_i_j_dot}) and (\ref{direct_comp_any_h_der}). From
(\ref{computing_L_1}) we have
\begin{gather}
 L_1 f = -\de (c^2)\dot{}\, \nabla f - [\nabla_u,\de] c^2 \nabla f - \de c^2 [\nabla_u, \nabla] f .
\nonumber
\end{gather}
Using the above formulas we get
\begin{gather}
(\de(c^2)\dot{}\,\nabla f)\dot{} = \de(c^2)\ddot{} \, \nabla f + 
u_i^j((c^2)\dot{}f_i)_j + \de(c^2)\dot{}(\nabla\dot{f} - (\nabla u^j)f_j),
\label{big_various_1}
\end{gather}
and
\begin{gather}
 ([\nabla_u,\de]c^2\nabla f)\dot{} = ((c^2 f_i)_j + 2u^k_ju_k^i)(c^2f_j)_i - u^i_j((c^2 f_j)\dot{})_i .
\label{big_various_2}
\end{gather}
But
\begin{gather} 
 ((c^2f_j)\dot{})_i = ((c^2)\dot{} f_j + c^2 ((\dot{f})_j-u^k_jf_k))_i .
\label{big_various_3}
\end{gather}
Therefore
\begin{gather}
 ([\nabla_u,\de]c^2\nabla f)\dot{} = ((c^2f_i)_j + 2u_j^ku_k^i)(c^2f_j)_i - u_j^i((c^2)\dot{}f_j + c^2((\dot{f})_j - u^k_j f_k))_i .
\label{big_various_4}
\end{gather}
Also
\begin{gather}
 (\de c^2 [\nabla_u,\nabla]f)\dot{} = \de (c^2)\dot{}[\nabla_u,\nabla] f - 
(c^2(((c^2f_i)_j + 2u_j^ku_k^i)f_i - u_j^i(\dot{f})_i))_j - u_i^j(c^2u_i^kf_k)_j .
\label{big_various_5}
\end{gather}
From (\ref{big_various_1}), (\ref{big_various_4}) and (\ref{big_various_5}) we find
\begin{align}
\begin{split}
 \parallel (L_1 f)\dot{} \parallel_1 \leq & K \Big ( \parallel (c^2)\ddot{}\parallel_2 
  +
 \parallel (c^2)\dot{}\parallel_2 \parallel u \parallel_3 
 \\
 & 
 +  \parallel c^2\parallel_3( \parallel u \parallel_3 + 
\parallel u \parallel_3^2 +  \parallel c^2\parallel_3\parallel f \parallel_3) \Big)\parallel f 
\parallel_4 
\\
& 
+ K(\parallel (c^2)\dot{}\parallel_2 + \parallel c^2\parallel_2\parallel u \parallel_3 )\parallel \dot{f} \parallel_3 
\\
\leq & K ( k^\frac{3}{2} \parallel f \parallel_4 + k \parallel \dot{f} \parallel_3 ) .
\end{split}
\label{second_step_I_1}
\end{align}
To complete the estimate of $I_1$, we now estimate the norm of 
\begin{gather}
\ddot{F} = (u_j^i u^j_i)\,\ddot{} = -\big ( ( (c^2f_i)_j + u^k_j u^i_k)u^j_i 
+ ( (c^2f_j)_i
+  u_i^k u^j_k)u^i_j \big )\dot{} \,\,.
\nonumber
\end{gather}
Note that
\begin{gather}
 ( (c^2 f_i)_j )\dot{} = ( (c^2 f_i)\dot{})_j - u^k_j(c^2 f_i)_k,
\nonumber
\end{gather}
which by (\ref{direct_comp_any_h_der}) equals
\begin{gather}
( (c^2)\dot{}f_i + c^2( (\dot{f})_i - u_i^k f_k ) )_j - u^k_j (c^2f_i)_k .
\label{big_various_6}
\end{gather}
Using (\ref{big_various_6}) and (\ref{u_i_j_dot}) we find that
\begin{align}
\begin{split}
 \parallel \ddot{F} \parallel_1 \leq 
 & K \Big (
\parallel u \parallel_3^4 + \parallel u \parallel_3^2\parallel c^2 \parallel_3 \parallel f \parallel_3 + 
\parallel u \parallel_3 \parallel (c^2)\dot{} \parallel_2 \parallel f \parallel_3 
\\
& + 
\parallel u \parallel_3 \parallel c^2 \parallel_3 \parallel \dot{f} \parallel_3 \Big ) 
\\
\leq & K k( \parallel f \parallel_3 + \parallel \dot{f} \parallel_3 ) .
\end{split}
\label{estimate_on_F_ddot} 
\end{align}
Combining
(\ref{first_step_I_1}), (\ref{second_step_I_1}) and (\ref{estimate_on_F_ddot}) we find that
\begin{gather}
 |I_1| \leq K k \parallel \dddot{f} \parallel_1 ( k \parallel \dot{f} \parallel_3 + k^\frac{3}{2} \parallel f \parallel_4 + 1) .
\label{estimate_on_I_1}
\end{gather}
We now proceed to estimate $R_1$. By commuting $\nabla$ and $\nabla_u$ and integrating by parts we find
\begin{align}
\begin{split}
 \int_\Om \langle c \nabla \nabla_u \dddot{f}, c \nabla \dddot{f} \rangle =
 \int_\Om \langle c [\nabla,\nabla_u] \dddot{f}, c\nabla \dddot{f} \rangle 
 \\
 - \int_\Om \langle (\nabla_u c) \nabla \dddot{f}, c\nabla \dddot{f} \rangle + \frac{1}{2} \int_\Om \de(u) \langle
 c\nabla \dddot{f},c\nabla \dddot{f} \rangle .
 \end{split}
\label{computations_R_1_1}
\end{align}
Similarly
\begin{gather}
 \int_\Om (L \nabla_u \ddot{f}) L\ddot{f} = \int_\Om \Big (
-(\de(\nabla_u (c^2))\nabla \ddot{f} ) L\ddot{f} + \text{com} + \frac{1}{2} \de(u) (L\ddot{f})^2 \Big ),
\label{computations_R_1_2}
\end{gather}
where ``$\text{com}$" indicates second order terms in $\ddot{f}$ involving the commutators
$[\nabla,\nabla_u]$ and $[\de,\nabla_u]$. 

Plugging (\ref{computations_R_1_1}) and (\ref{computations_R_1_2}) in the expression (\ref{definition_R_1}) 
for $R_1$ we get
\begin{gather}
 R_1 = \int_\Om \langle (\partial_t c) \nabla \dddot{f},c\nabla \dddot{f} \rangle - 
\int_\Om \langle c[\nabla,\nabla_u]\dddot{f},c\nabla \dddot{f}\rangle + 
\int_\Om \langle (\nabla_u c) \nabla \dddot{f},c\nabla\dddot{f} \rangle
\nonumber
\\
 -\frac{1}{2} \int_\Om \de(u) \langle c \nabla \dddot{f},c\nabla \dddot{f} \rangle  \nonumber 
 -\int_\Om (\de(\partial_tc^2))\nabla \ddot{f} )L\ddot{f} - \int_\Om\big(-(\de(\nabla_u (c^2))\nabla \ddot{f} ) \big )L\ddot{f}
 \nonumber \\
  - \frac{1}{2} \int_\Om \de(u) (L\ddot{f})^2 -\text{com} .
\nonumber
\end{gather}
The first and third integrals combined yield $\int_\Om \langle \dot{c} \nabla \dddot{f},c\nabla \dddot{f} \rangle $. The 
fifth and sixth integrals can be rewritten as
\begin{gather}
-\int_\Om (\de(\partial_tc^2))\nabla \ddot{f} )L\ddot{f} - \int_\Om\big(-(\de(\nabla_u (c^2))\nabla \ddot{f} ) \big )L\ddot{f} 
\nonumber \\
= -\int_\Om (\de(\partial_tc^2))\nabla \ddot{f} )L\ddot{f} + \int_\Om (\de(\nabla_u (c^2))\nabla \ddot{f} )L\ddot{f} 
- \int_\Om (\de(\nabla_u (c^2))\nabla \ddot{f} )L\ddot{f} + \int_\Om (\de(\nabla_u (c^2))\nabla \ddot{f} )L\ddot{f}
\nonumber \\
 = - \int_\Om (\de(c^2)\dot{}\,\nabla \ddot{f} )L\ddot{f} + 2\int_\Om (\de(\nabla_u (c^2))\nabla \ddot{f} )L\ddot{f} .
\nonumber
\end{gather}
Therefore
\begin{gather}
 R_1 = \int_\Om \Big ( \langle \dot{c} \nabla \dddot{f}, c\nabla \dddot{f} \rangle - (\de(c^2)\dot{}\,\nabla \ddot{f})L\ddot{f}
- \frac{1}{2} \de(u) \big ( \langle c\nabla \dddot{f}, c\nabla\dddot{f} \rangle + (L\ddot{f})^2 \big ) \nonumber \\
 -\frac{1}{2}\langle c[\nabla,\nabla_u]\dddot{f}, c\nabla \dddot{f} \rangle - 
\text{com} + 2(\de(\nabla_u (c^2))\nabla \ddot{f} )L\ddot{f} \Big ) .
\nonumber
\end{gather}
Hence
\begin{align}
\begin{split}
 |R_1| \leq & K \Big ( \parallel (c^2)\dot{} \parallel_2\parallel \dddot{f} \parallel^2_1 + 
\parallel (c^2)\dot{}\parallel_2\parallel c^2\parallel_2\parallel \ddot{f} \parallel_2^2 
\\
& + 
\parallel u \parallel_3(\parallel c^2 \parallel_2\parallel \dddot{f} \parallel_1^2 + \parallel c^2 \parallel_2^2 \parallel \ddot{f}
 \parallel_2^2 ) \Big ) 
 \\
\leq&  K( k \parallel \dddot{f} \parallel_1^2 + k^2 \parallel \ddot{f} \parallel_2^2 ) .
\end{split}
\label{estimate_on_R_1}
\end{align}
To estimate $B_1$ we must use the boundary condition (\ref{bc_comp}), and compute its material derivatives. In fact we 
will not get a good estimate for $B_1$ itself, but we will for $\int_0^t B_1$. Since we will use (\ref{E_fund_thm_calc}) to estimate
$E$, this will be sufficient.

Applying (\ref{f_4_dots_L_1}) we find that
\begin{gather}
 B_1 = \int_{\partial\Om} (c^2\nabla_\nu \dddot{f} )(\ddddot{f} - L_1 \dot{f} - (L_1 f)\dot{} - \ddot{F}) \nonumber \\
=  \int_{\partial\Om} (c^2\nabla_\nu \dddot{f})\ddddot{f} - \int_{\partial \Om} (c^2 \nabla_\nu \dddot{f})(L_1\dot{f}
- (L_1 f)\dot{} - \ddot{F} ) = B_{11} + B_{1R} .
\nonumber
\end{gather}
To estimate these terms we must compute $c^2 \nabla_\nu \dddot{f}$, and we do this by using the boundary condition for $f$.

Computing as above we find
\begin{gather}
 \nabla_\nu \dddot{f} = (\nabla_\nu f)\,\ddot{}\,\,\dot{} + (\nabla_{[\nu,u]}f)\ddot{} + (\nabla_{[\nu,u]}\dot{f})\dot{}
+ \nabla_{[\nu,u]}\ddot{f} .
\label{big_normal_3_dots_f}
\end{gather}
But by (\ref{normal_der_f})
\begin{gather}
(\nabla_\nu f)\, \ddot{}\,\,\dot{} = (-\frac{1}{c^2}\langle \nu, \nabla_u u \rangle)\,\ddot{}\,\,\dot{}  .
\label{3_dots_of_normal_f}
\end{gather}
Since $\langle \nu, u \rangle = 0$ on $\partial \Om$,
\begin{gather}
 -\langle \nu, \nabla_u u \rangle = S_2(u,u),
\nonumber
\end{gather}
where $S_2$ is the second fundamental form of $\partial \Om$. We shall sometimes write simply $S_2(u)$. Taking a material 
derivative we find
\begin{gather}
 (S_2(u))\dot{} = 2 S_2(\dot{u},u) + S_3(u),
\label{S_2_dot_terms_S_2}
\end{gather}
where $S_3$ is a symmetric tri-linear form.

Continuing we find
\begin{gather}
 (S_2(u))\,\ddot{} = 2 S_2(\ddot{u},u) + 2S_2(\dot{u},\dot{u}) + 5 S_3(\dot{u},u,u) + S_4(u)
\nonumber
\end{gather}
where $S_4$ is a symmetric quadri-linear form. 

Finally
\begin{gather}
 (S_2(u))\,\ddot{}\,\,\dot{} = 2 S_2(\dddot{u},u) + S_R(u,\dot{u},\ddot{u})
\nonumber
\end{gather}
where $S_R$ is a polynomial in $u$, $\dot{u}$, and $\ddot{u}$ of degree five, 
and each term of $S_R$ contains at most 
three dot's. That is, in a given term of $S_R$, $\dot{u}$ and $\ddot{u}$ appear with power $i$ and $j$ where 
$i+2j \leq 3$.

From (\ref{3_dots_of_normal_f}) we get a formula for $(\nabla_\nu f) \ddot{}\,\,\dot{} $ involving
$\frac{1}{c^2}$ and $S_2(u)$ and the first three material derivatives of each of them.
Letting $q = \frac{1}{c^2}$ we write this as
\begin{gather}
 (\nabla_\nu f)\, \ddot{}\,\,\dot{} = 2qS_2(\dddot{u},u) + \dddot{q}S_2(u) + f_{R\partial}
\label{normal_f_3_dots_q}
\end{gather}
where $f_{R\partial}$ is a polynomial in $u$, $\dot{u}$, $\ddot{u}$, $q$, $\dot{q}$, and $\ddot{q}$ each of whose terms 
contains at most three dot's.

In order to estimate this, we must compute material derivatives of $q$, and we do this using
(\ref{estimate_c2_k})-(\ref{estimate_c2_dddot}). Direct computation yields
\begin{gather}
 \dot{q} = -q^2 (c^2)\dot{} = -q^2 p^{\prime\prime}(\rho)\rho\dot{f}, \label{q_dot} \\
\ddot{q} = -q^2 (c^2)\ddot{} -(q^2)\dot{} (c^2)\dot{} = -q^2 (c^2)\ddot{} + 2 q^3 ((c^2)\dot{}\,)^2, \label{q_ddot}  \\
\dddot{q} = -q^2 (c^2)\ddot{}\,\,\dot{}+ 6 q^3 (c^2)\dot{} (c^2)\ddot{} - 6 q^4 ((c^2)\dot{})^3\,\,. \label{q_dddot} 
\end{gather}
From (\ref{estimate_c2_k}) we find that $\parallel \frac{c^2}{k} - 1 \parallel_4 \leq \frac{K}{\sqrt{k}}$ and from this it 
follows that $\parallel q \parallel_4 \leq \frac{K}{k}$. Then from(\ref{estimate_c2_dot})-(\ref{estimate_c2_dddot})
and (\ref{q_dot})-(\ref{q_dddot}) we get
\begin{gather}
 \parallel \dot{q} \parallel_3 \leq \frac{K}{k}, \label{various_estimates_q_dot} \\
\parallel \ddot{q} \parallel_2 \leq \frac{K}{\sqrt{k}}, \label{various_estimates_q_ddot} \\
\parallel \dddot{q} \parallel_1 \leq K.
\label{various_estimates_q_dddot}
\end{gather}
From these formulas and the estimates (\ref{estimate_u_dot})-(\ref{estimate_u_dddot}) for $u$ we get
\begin{gather}
 \parallel (\nabla_\nu f)\,\ddot{}\,\,\dot{} \parallel_{\partial,\frac{1}{2}} \leq K(1+\parallel \ddot{f} \parallel_2
+ \sqrt{k} \parallel f \parallel_4 ) .
\label{boundary_f_dddot}
\end{gather}
Now we estimate the other terms of (\ref{big_normal_3_dots_f}). Clearly
\begin{gather}
 \parallel \nabla_{[\nu,u]} \ddot{f} \parallel_{\partial,\frac{1}{2}} \leq K \parallel \ddot{f} \parallel_2
\nonumber
\end{gather}
and using (\ref{additional_formula}) we get
\begin{gather}
 \parallel ( \nabla_{[\nu,u]} \dot{f})\dot{} \parallel_{\partial,\frac{1}{2}} \leq K(1 + k\parallel f \parallel_4 +
\parallel \ddot{f} \parallel_2 ) .
\label{boundary_f_ddot}
\end{gather}
Finally we estimate
\begin{gather}
 (\nabla_{[\nu,u]} f)\ddot{} = (\nabla_{[\nu,u]\,\dot{}} f + \nabla_{[\nu,u]} \dot{f} 
 - [u,\nu]^i u_i^jf_j )\dot{} \,\,.
\label{com_ddot}
\end{gather}
By taking a material derivative of (\ref{u_i_j_dot}), using (\ref{estimate_c2_dot}) and 
(\ref{estimate_u_dot})-(\ref{estimate_u_ddot}) we find
\begin{gather}
 \parallel \nabla_{[u,\nu]\,\ddot{}} f \parallel_{\partial,\frac{1}{2}} \leq K k \parallel f \parallel_4 .
\nonumber
\end{gather}
The other terms of (\ref{com_ddot}) are easily estimated and we get
\begin{gather}
\parallel (\nabla_{[u,\nu]}f)\ddot{} \parallel_{\partial,\frac{1}{2}} \leq K(k\parallel f \parallel_4 
+ \parallel \dot{f} \parallel_3 + \parallel \ddot{f} \parallel_2 ) .
\label{boundary_f_com}
\end{gather}
Combining (\ref{boundary_f_dddot})-(\ref{boundary_f_ddot}) and (\ref{boundary_f_com}) we get
\begin{gather}
 \parallel \nabla_\nu \dddot{f} \parallel_{\partial,\frac{1}{2}} \leq K (1 + \parallel \ddot{f} \parallel_2 +
\parallel \dot{f} \parallel_3 + k \parallel f \parallel_4 ) .
\label{estimate_boundary_f_3_dots}
\end{gather}
Therefore
\begin{gather}
 \n c^2 \nabla_\nu \dddot{f} \n_{\partial,\frac{1}{2}} \leq K ( k\parallel \ddot{f} \parallel_2 
+ k \parallel \dot{f} \parallel_3 +k^2 \parallel f \parallel_4 ) ,
\nonumber
\end{gather}
so from (\ref{first_step_I_1}), (\ref{second_step_I_1})  and (\ref{estimate_on_F_ddot})
we get
\begin{gather}
 |B_{1R}| \leq K(k \parallel \dot{f} \parallel_3 + k^\frac{3}{2} \parallel f \parallel_4 )( k \parallel \ddot{f} \parallel_2 +
k \parallel \dot{f} \parallel_3 + k^2 \parallel f \parallel_4 ) .
\label{estimate_on_B_1R}
\end{gather}
It remains to estimate $\int_0^tB_{11}$, and to do this we integrate by parts with respect to the material derivative, obtaining
\begin{gather}
 \int_0^t B_{11} = \int_{\partial \Om} (c^2 \nabla_\nu \dddot{f})\dddot{f} \Big |_0^t
  - \int_0^t \int_{\partial \Om}
(c^2 \nabla_\nu \dddot{f})\dot{} \,\dddot{f} 
+ \int_0^t \int_{\partial \Om} \de_\partial(u) (c^2\nabla_\nu \dddot{f})\dddot{f},
\label{integral_B_11}
\end{gather}
where $\de_\partial$ is the formal adjoint of the gradient on the manifold $\partial \Om$.
The last term of this expression is clearly bounded by
\begin{gather}
 K \int_0^t \parallel u \parallel_3\parallel c^2 \nabla_\nu \dddot{f} \parallel_{\partial,\frac{1}{2}} \parallel \dddot{f} \parallel_1 .
\label{estimate_error_integral_B_11}
\end{gather}
To compute the next to last term, we take the material derivative of $c^2 \nabla_\nu \dddot{f}$.
Using (\ref{big_normal_3_dots_f}) we get
\begin{gather}
 (c^2 \nabla_\nu \dddot{f})\dot{} = (c^2)\dot{}\,\nabla_\nu \dddot{f} + 
c^2 ( (\nabla_\nu f)\dot{}\,\dot{}\,\dot{}\,\dot{}+ (\nabla_{[\nu,u]}f)\ddot{}\,\,\dot{} +
(\nabla_{[\nu,u]}\dot{f})\ddot{} + (\nabla_{[\nu,u]}\ddot{f})\dot{} ).
\nonumber
\end{gather}
From (\ref{estimate_boundary_f_3_dots}) we find
\begin{gather}
 \parallel (c^2)\dot{} \, \nabla_\nu \dddot{f} \parallel_{\partial,\frac{1}{2}} 
\leq K (k + k\parallel \ddot{f} \parallel_2 + k \parallel \dot{f} \parallel_3 + k^2 \parallel f \parallel_4 )
\nonumber
\end{gather}
so since $\parallel\dot{f} \parallel_3$ is assumed to be bounded 
\begin{gather}
 \big | \int_{\partial \Om} (c^2)\dot{} (\nabla_\nu \dddot{f}) \dddot{f} \big | \leq 
K\parallel \dddot{f}\parallel_1 ( k + k \parallel \ddot{f} \parallel_2 + k^2 \parallel f \parallel_4) .
\label{estimate_boundary_int_c_2_dot_normal}
\end{gather}
From (\ref{normal_f_3_dots_q}) we find that
\begin{gather}
 (\nabla_\nu f)\dot{}\,\dot{}\,\dot{}\,\dot{} = 
2q ( S_2(\ddddot{u},u) + S_2(\dddot{u},\dot{u}) + S_3(u,\dddot{u},u) ) + 2\dot{q}S_2(\dddot{u},u)
+ \ddddot{q} S_2(u) + \dddot{q} (S_2(u))\dot{} + (f_{R\partial})\dot{}
\nonumber
\end{gather}
$(f_{R\partial})\dot{}$ involves at most third material derivatives of $q$ and $u$ and so do all the other terms except
$2qS_2(\ddddot{u},u)$ and $\ddddot{q} S_2(u)$. Collecting them as $f_{R_1\partial}$ we get
\begin{gather}
(\nabla_\nu f )\dot{}\,\dot{}\,\dot{}\,\dot{} = 2qS_2(\ddddot{u},u) +  \ddddot{q} S_2(u)+ f_{R_1\partial} .
\label{boundary_f_4_dots_q_S}
\end{gather}
However, estimating as before we find
\begin{gather}
 \p f_{R_1\partial} \p_{\partial,\frac{1}{2}} 
\leq K (\sqrt{k} + k \parallel f \parallel_4 + k \parallel \dot{f} \parallel_3 + \p \ddot{f} \p_2 )
\label{estimate_f_R1_boundary}
\end{gather}
so we need only worry about the first two terms on the rightside of (\ref{boundary_f_4_dots_q_S}). Taking a material derivative 
of (\ref{u_i_3_dots}) we find that
\begin{gather}
 \ddddot{u} = u_R -c^2 \nabla \dddot{f}
 \nonumber
\end{gather}
where $u_R$ is a polynomial in (up to) third material derivatives of $c^2$ and in $u_j^i$, $\nabla f$, 
$\nabla \dot{f}$, and $\nabla \ddot{f}$. Thus we get
\begin{gather}
 \p u_R \p_1 \leq K ( k \p \ddot{f} \p_2 + k^\frac{3}{2} \p \dot{f} \p_3 
 + k^2 \p f \p_4 ) .
\label{estimate_u_R_boundary}
\end{gather}
Similarly, differentiating (\ref{q_dddot}) we get
\begin{gather}
 \ddddot{q} = -q^2(c^2)\dot{}\,\dot{}\,\dot{}\,\dot{} + q_R,
\nonumber
\end{gather}
where $q_R$ is a polynomial in $q$ and up to third material derivatives of $c^2$. Using the 
formulas (\ref{estimate_c2_k})-(\ref{estimate_c2_dddot}) we get
\begin{gather}
 \p q_R \p_1 \leq K(\sqrt{k} + \frac{1}{k} \p \dddot{f} \p_1 ) .
\label{estimate_q_R_boundary}
\end{gather}
But differentiating  (\ref{c2dddot}), we find
\begin{gather}
 (c^2)\dot{}\,\dot{}\,\dot{}\,\dot{}\ = p^{\prime\prime}(\rho) \rho \ddddot{f} + C_R,
\nonumber
\end{gather}
where $C_R$ is a polynomial in $\rho$ and material derivatives of $f$ whose coefficients are up to fifth derivatives of the 
function $p(\rho)$.

Estimating as before we get
\begin{gather}
 \p C_R \p_1 \leq K ( k + k^\frac{3}{2} \p \ddot{f} \p_2 + k \p \dddot{f} \p_1 )
\nonumber
\end{gather}
so
\begin{gather}
 \p q^2 C_R \p_1 \leq K ( \frac{1}{k} + \frac{1}{k} \p \dddot{f} \p_1 + \frac{1}{\sqrt{k}} \p \ddot{f} \p_2 ) .
\label{estiamte_q_2C_R}
\end{gather}
Using (\ref{boundary_f_4_dots_q_S}) and the above, we can write
\begin{gather}
 (\nabla_\nu f )\dot{}\,\dot{}\,\dot{}\,\dot{} = 2qS_2(c^2\nabla \dddot{f},u) -
q^2 p^{\prime\prime}(\rho)\rho \ddddot{f} S_2(u) + f_{R_2\partial}
\label{normal_f_4_dots_2}
\end{gather}
where 
\begin{gather}
 f_{R_2\partial} = 2qS_2(u_R, u)  - q_R S_2(u) - q^2C_RS_2(u) + f_{R_1\partial} .
\nonumber
\end{gather}
Using (\ref{estimate_f_R1_boundary})-(\ref{estimate_q_R_boundary}) and (\ref{estiamte_q_2C_R}) we get
\begin{gather}
 \p f_{R_2\partial} \p_{\partial,\frac{1}{2}} \leq K( \sqrt{k} + \frac{1}{k} \p \dddot{f} \p_1
+ \p \ddot{f}\p_2 + k \p \dot{f} \p_3 + k \p f \p_4 ) .
\label{estimate_boundary_f_R_2}
\end{gather}
We now use (\ref{normal_f_4_dots_2}) to estimate
\begin{gather}
 \int_0^t \int_{\partial\Om} c^2 (\nabla_\nu f )\dot{}\,\dot{}\,\dot{}\,\dot{} \, \dddot{f} .
\nonumber
\end{gather}
On $\partial\Om$ we can write
\begin{gather}
 \nabla \dddot{f} = \nabla_\partial \dddot{f} + \nabla_\nu \dddot{f}
\label{decomp_f_4_dots_tan_normal}
\end{gather}
where $\nabla_\partial \dddot{f}$ is the gradient of $\dddot{f}_{|\partial \Om}$. Then
\begin{align}
\begin{split}
\int_0^t \int_{\partial\Om} (\nabla_\nu f )\dot{}\,\dot{}\,\dot{}\,\dot{} \, \dddot{f}
= &\int_0^t \int_{\partial \Om} \Big ( S_2(c^2 \nabla_\partial (\dddot{f})^2,u) + 2S_2(c^2\nabla_\nu \dddot{f},u)\dddot{f} \\
&-\frac{1}{2} q p^{\prime\prime}(\rho) \rho S_2(u) ((\dddot{f})^2)\dot{} + c^2 (f_{R_2\partial})\dddot{f} \Big ).
\label{int_0_t_normal_4_dots}
\end{split}
\end{align}
From (\ref{estimate_boundary_f_3_dots}) and (\ref{estimate_boundary_f_R_2}) we see that the second and forth
terms of (\ref{int_0_t_normal_4_dots}) are bounded by
\begin{gather}
 K\int_0^t (k^\frac{3}{2} + \p \dddot{f} \p_1 + k \p \ddot{f} \p_2 + k^2 \p \dot{f} \p_3 + k^2 \p f \p_4)\p \dddot{f} \p_1 .
\label{second_fourth}
\end{gather}
Since $\nabla_\partial$ involves derivative only along $\partial \Om$ we can integrate by parts to get
\begin{gather}
 \Big | \int_0^t \int_{\partial \Om}S_2(c^2 \nabla_\partial (\dddot{f})^2,u) \Big | \leq K \int_0^t k \p \dddot{f} \p_1^2 .
\label{big_estimate_int_0_t_1}
\end{gather}
Also integrating by parts with respect to material derivative we find that
\begin{align}
\begin{split}
 \Big | \int_0^t \int_{\partial \Om} q p^{\prime\prime}(\rho) \rho S_2(u) ( (\dddot{f})^2)\dot{} \Big |  \leq & 
\Big | \int_{\partial\Om} qp^{\prime\prime}(\rho) S_2(u) (\dddot{f})^2 \Big |_0^t\Big | 
\\
+
& K\int_0^t \p \dddot{f}\p_1^2 (1 + \p\dot{f}\p_3 + k \p f \p_4 ) .
\end{split}
\label{big_estimate_int_0_t_2}
\end{align}
But 
\begin{gather}
 \int_{\partial\Om} qp^{\prime\prime}(\rho) S_2(u) (\dddot{f})^2 \leq K \p \dddot{f} \p_1^2
\nonumber
\end{gather}
so combining
(\ref{second_fourth}), 
 (\ref{big_estimate_int_0_t_1}), (\ref{big_estimate_int_0_t_2}) we get
\begin{align}
\begin{split}
 \Big | \int_0^t \int_{\partial \Om} c^2 (\nabla_\nu f)\dot{}\,\dot{}\,\dot{}\,\dot{}\, \dddot{f} \big | 
\leq 
& K( \p \dddot{f}(0)\p_1^2 + \p \dddot{f}(t) \p_1^2 ) 
+
K \int_0^t \p \dddot{f} \p_1 \Big ( k^\frac{3}{2} + \p \dddot{f} \p_1 
\\
& + k \p \ddot{f} \p_2 + k^2 \p \dot{f} \p_3 + k^2 \p f \p_4 \Big) .
\end{split}
\label{estimate_boundary_int_c_2_normal} 
\end{align}
To complete our estimate of $\int_0^t \int_{\partial \Om} (c^2 \nabla_\nu \dddot{f} )\dot{} \, \dddot{f}$ we must estimate
\begin{gather}
\int_0^t \int_{\partial\Om} c^2 \Big \{  (\nabla_{[\nu,u]}f)\,\ddot{}\,\,\dot{} + 
(\nabla_{[\nu,u]}\dot{f})\,\ddot{} + (\nabla_{[\nu,u]}\ddot{f})\dot{} \dot{} \Big \} \dddot{f} .
\nonumber
\end{gather}
Computing the material derivative we find that the term in braces equals
\begin{gather}
 3\nabla_{[\nu,u]}\dddot{f} + \nabla_{[\nu,u]\,\ddot\,\,\dot{}}f + f_{R_3}
\nonumber
\end{gather}
where $f_{R_3}$ is a polynomial in $c^2$ and its first two material 
derivatives as well as $u_j^i$, $\nabla f$, $\nabla \dot{f}$, and $\nabla\ddot{f}$. 
Thus we get
\begin{gather}
 \p f_{R_3} \p_1 \leq K ( \sqrt{k} \p \ddot{f} \p_2 + k \p \dot{f} \p_3 + k \p f \p_4 ),
\nonumber
\end{gather}
so
\begin{gather}
 \Big | \int_0^t \int_{\partial \Om} c^2 f_{R_3} \dddot{f} \Big | \leq 
K \int_0^t ( \sqrt{k} \p \ddot{f} \p_2 + k \p \dot{f} \p_3 + k \p f \p_4)\p \dddot{f}\p_1 .
\label{estiamte_int_f_R_3}
\end{gather}
We decompose $\nabla_{[\nu,u]} \dddot{f}$ in to tangential and normal derivatives of $\dddot{f}$ (as we did in
(\ref{decomp_f_4_dots_tan_normal})), getting
\begin{gather}
\nabla_{[\nu,u]} \dddot{f} = ( \nabla_{[\nu,u]} \dddot{f} )_\partial + (\nabla_{[\nu,u]} \dddot{f})_\nu .
\nonumber
\end{gather}
Then integration by parts gives
\begin{gather}
\Big | \int_{\partial \Om} c^2 ( \nabla_{[\nu,u]} \dddot{f} )_\partial \dddot{f} \Big | \leq K k \p \dddot{f} \p_1^2  .
\label{estimate_int_term_partial}
\end{gather}
Also $(\nabla_{[\nu,u]} \dddot{f})_\nu = \langle [\nu,u],\nu \rangle \nabla_\nu \dddot{f}$ so 
from (\ref{estimate_boundary_f_3_dots}) we get
\begin{gather}
\Big | \int_{\partial \Om} c^2 ( \nabla_{[\nu,u]} \dddot{f} )_\nu \dddot{f} \Big | \leq
K (k + k \n \ddot{f} \n_2 + k \n \dot{f} \n_3 + k^2 \n f \n_4 )\n \dddot{f} \n_1 .
\label{estimate_int_normal}
\end{gather}
We now deal with the remaining term $\nabla_{[\nu,u]\,\ddot{}\,\,\dot{}} f$. Computing, we find that
\begin{gather}
 [\nu,u]\,\ddot{}\,\,\dot{} = -c^2 \nabla \nabla_\nu \ddot{f} + u_{R_2}
\nonumber
\end{gather}
where $u_{R_2}$ obeys the inequality
\begin{gather}
 \n u_{R_2} \n_{\partial,\frac{1}{2}} \leq K (k^\frac{3}{2} \n f \n_4 + k \n \dot{f} \n_3) .
\label{estiamte_u_R_2}
\end{gather}
Using the tangential and normal decomposition on $\partial \Om$ we get
\begin{gather}
 -c^2 \nabla\nabla_\nu \ddot{f} = -c^2 \nabla_\partial \nabla_\nu \ddot{f} - c^2 \nabla_\nu \nabla_\nu \ddot{f} .
\nonumber
\end{gather}
But 
\begin{gather}
 c^2 \nabla_\nu \nabla_\nu \ddot{f} = L\ddot{f} + c^2 D_\tau \nabla_\nu \ddot{f} - L_\partial \ddot{f} + L_R \ddot{f}
\nonumber
\end{gather}
where $D_\tau$ is a first order operator involving only tangential derivatives, 
$L_\partial = -\de_\partial c^2 \nabla_\partial$ ($\nabla_\partial$ being the gradient of the 
manifold $\partial \Om$ and $\de_\partial$ its formal adjoint), and $L_R$ is a first order operator whose 
coefficients involve $c^2$ and its first derivatives.

Thus we find
\begin{align}
\begin{split}
 \int_{\partial \Om}
 &  c^2 (\nabla_{[\nu,u]\,\ddot{}\,\,\dot{}}f)\dddot{f} = 
\int_{\partial \Om} \Big ( c^2 \nabla_{u_{R_2}} f - c^4(\nabla_\partial\nabla_\nu \ddot{f} ) (\nabla_\partial f) \\
&
-c^2(L\ddot{f} + c^2 D_\tau \nabla_\nu \ddot{f} - L_\partial \ddot{f} + L_R \ddot{f} ) \nabla_\nu f \Big ) \dddot{f} \,\, .
\end{split}
\nonumber
\end{align}
Using (\ref{estiamte_u_R_2}) we find that
\begin{gather}
\Big | \int_{\partial \Om} ( c^2 \nabla_{u_{R_2}} f - (c^2L_R\ddot{f})\nabla_\nu f)\dddot{f} \Big |  \leq 
K \n \dddot{f} \n_1 ( k^\frac{3}{2}\n f \n_4 + k \n \dot{f} \n_3 + k \n \ddot{f} \n_2 ) .
\nonumber
\end{gather}
Since 
\begin{gather}
 \nabla_\nu \ddot{f} = (\nabla_\nu f)\ddot{} + (\nabla_{[\nu,u]} f)\dot{} + \nabla_{[\nu,u]}\dot{f}
\nonumber
\end{gather}
and 
\begin{gather}
 (\nabla_\nu f)\ddot{}  = -(qS_2(u))\ddot{}
\nonumber
\end{gather}
we also get the estimate
\begin{align}
\begin{split}
 \Big | \int_{\partial\Om} c^4  \big ( (\nabla_\partial \nabla_\nu \ddot{f} )\nabla_\partial f + 
(D_\tau \nabla_\nu \ddot{f})\nabla_\nu f \big ) \dddot{f} \Big | 
& \leq 
K \n \dddot{f} \n_1 ( k^\frac{3}{2} + k^2\n f \n_4 + k^\frac{3}{2} \n \dot{f} \n_3 ) \\
& \leq K k^\frac{3}{2} \n \dddot{f} \n_1 .
\end{split}
\nonumber
\end{align}
It remains to estimate the terms
\begin{gather}
 \int_0^t \int_{\partial \Om} c^2(L_\partial \ddot{f})\dddot{f} \,\, \text{  and  } \,\, 
 \int_0^t \int_{\partial \Om}
c^2 (L\ddot{f}) \dddot{f} .
\nonumber
\end{gather}
The first of these equals
\begin{gather}
 -\int_0^t \int_{\partial\Om} c^2 \langle \nabla_\partial \ddot{f},\nabla_\partial c^2 \dddot{f} \rangle ,
\nonumber
\end{gather}
which equals
\begin{gather}
 -\frac{1}{2} \int_0^t \int_{\partial\Om} c^4 (| \nabla_\partial \ddot{f}|^2)\dot{} 
 + \int_0^t R_2, 
\nonumber
\end{gather}
where 
\begin{gather}
 |R_2| \leq K k^2 \n \ddot{f} \n_2^2.
\nonumber
\end{gather}
Using  (\ref{f_4_dots_L_1}) we find that the second term equals
\begin{gather}
 \int_0^t \int_{\partial \Om} c^2(\ddddot{f} - L_1 \dot{f} - (L_1 f)\dot{} -\ddot{F} )\dddot{f} \, ,
\nonumber
\end{gather}
which equals
\begin{gather}
 \frac{1}{2} \int_0^t \int_{\partial\Om} c^2( (\dddot{f})^2 )\dot{} + \int_0^t R_3,
\nonumber
\end{gather}
and from (\ref{first_step_I_1}), (\ref{second_step_I_1}) and (\ref{estimate_on_F_ddot}) we find that 
\begin{gather}
 |R_3| \leq K \n \dddot{f} \n_1 (k \n\dot{f}\n_3 + k^\frac{3}{2} \n f \n_4) .
\nonumber
\end{gather}
Therefore
\begin{gather}
 \int_0^t \int_{\partial \Om} c^2 (\nabla_{[\nu,u]\,\ddot{}\,\,\dot{}} f) \dddot{f} = 
-\frac{1}{2}\int_{\partial\Om} \big ( c^4 |\nabla_\partial \ddot{f} |^2 + c^2 |\dddot{f}|^2 \big ) \Big |_0^t + \int_0^t R_4
\label{remaining_term}
\end{gather}
where 
\begin{gather}
 |R_4| \leq  K (k^\frac{3}{2} \n \dddot{f} \n_1 + k \n \dddot{f} \n_1^2 + k^2 \n \ddot{f} \n_2^2 ) .
\nonumber
\end{gather}
We let $P(t)$ stand for the term 
$\frac{1}{2}\int_{\partial\Om} \big ( c^4 |\nabla_\partial \ddot{f} |^2 + c^2 |\dddot{f}|^2 \big )$
so that  (\ref{remaining_term}) becomes
\begin{gather}
 \int_0^t \int_{\partial \Om} c^2 (\nabla_{[\nu,u]\,\ddot{}\,\,\dot{}} f) \dddot{f} = P(0) - P(t) + \int_0^t R_4 .
\label{int_in_terms_P}
\end{gather}
Combining (\ref{estiamte_int_f_R_3}), (\ref{estimate_int_term_partial}), (\ref{estimate_int_normal}) and
(\ref{int_in_terms_P}), and using our assumed bounds on $\n f \n_4$, $\n \dot{f} \n_3$, and $\n \ddot{f} \n_2$
we find
\begin{gather}
\int_0^t \int_{\partial\Om} c^2 \Big \{  (\nabla_{[\nu,u]}f)\,\ddot{}\,\,\dot{} + 
(\nabla_{[\nu,u]}\dot{f})\,\ddot{} + (\nabla_{[\nu,u]}\ddot{f}) \dot{} \Big \} \dddot{f} =
P(0) - P(t) + \int_0^t R_5
\label{remaining_big_terms_P}
\end{gather}
where
\begin{gather}
 |R_5| \leq K ( k \n \dddot{f} \n_1^2 + k^\frac{3}{2} \n \dddot{f} \n_1 + k^2 \n \ddot{f} \n_2^2 ) .
\nonumber
\end{gather}
Then combining (\ref{estimate_boundary_int_c_2_dot_normal}), (\ref{estimate_boundary_int_c_2_normal}) and 
(\ref{remaining_big_terms_P}) we find
\begin{gather}
 P(t) + \int_0^t\int_{\partial\Om} (c^2 \nabla_\nu \dddot{f} )\,\dot{} \, \dddot{f} \leq
P(0) + K(\n \dddot{f}(0)\n_1^2 + \n\dddot{f}(t)\n_1^2 ) \nonumber \\
+  K \int_0^t(k^\frac{3}{2} \n \dddot{f}\n_1 + k \n \dddot{f} \n_1^2 + k^2 \n \ddot{f} \n_2^2 + k^2 \n \dddot{f} \n_1 \n \dot{f} \n_3) .
\nonumber
\end{gather}
Using (\ref{integral_B_11}), and (\ref{estimate_error_integral_B_11}) and the fact that
\begin{gather}
 k^2\n \dddot{f} \n_1 \n \dot{f} \n_3 \leq \frac{1}{2}(k \n \dddot{f} \n_1^2 + k^3 \n \dot{f} \n_3^2)
\nonumber
\end{gather}
we get
\begin{align}
\begin{split}
 P(t)  + \int_0^t  B_{11}  \leq & \, P(0) + K(Q(t) + Q(0))   \\
 & + 
K \int_0^t( k^\frac{3}{2} \n \dddot{f}\n_1 + k \n \dddot{f} \n_1^2 + k^2 \n \ddot{f} \n_2^2 + k^3 \n \dot{f} \n_3^2  ),
\end{split}
\label{estimate_on_integral_B_11}
\end{align}
where $Q$ is a function of time defined by
\begin{gather}
 Q = \n \dddot{f} \n_1^2 + \n \dddot{f} \n_1(k + k \n \ddot{f} \n_2 + k \n \dot{f} \n_3 + k^2 \n f \n_4 ) .
\nonumber
\end{gather}
Clearly this $Q$ obeys the inequality
\begin{gather}
 Q \leq K (k^\frac{1}{2} \n \dddot{f}\n_1^2 + k^\frac{3}{2} + k^\frac{3}{2} \n \ddot{f} \n_2^2 +
k^\frac{3}{2} \n \dot{f} \n_1^2 + k^\frac{7}{2} \n f \n^2_4 ) .
\label{ineq_Q}
\end{gather}
Finally using (\ref{estimate_on_I_1}), (\ref{estimate_on_R_1}), (\ref{estimate_on_B_1R}) and 
(\ref{estimate_on_integral_B_11}) we get
\begin{align}
\begin{split}
 E(t) + P(t) \leq & \, E(0) + P(0) + 
K(Q(t) + Q(0)) \\
&  + 
K \int_0^t( k \n \dddot{f}\n_1^2 + k^2 \n \ddot{f} \n_2^2 + k^3 \n \dot{f} \n_3^2 + k^4 \n f \n_4^2 + k^2 ).
\end{split}
\label{estimate_growth_E}
\end{align}
This is our estimate for the growth of $E(t)$.

We now proceed to show that $E$ gives a bound for the norms of $f$ and its material derivatives. First we 
note that since $c^2$ is in $H^4$, $L : H^s \rar H^{s-2}$ ($2 \leq s \leq 5$) is a bounded operator with null 
space and co-null space equal to the constant functions. Because of this we will need the decomposition
\begin{gather}
 H^s = H_0^s\oplus\operatorname{Con}
\label{decomp_H_s}
\end{gather}
given by
\begin{gather}
 g = g_1 + g_2
\nonumber
\end{gather}
where $g_2$ is the constant function
\begin{gather}
 g_2 = \frac{\int_\Om g}{\int_\Om 1 } .
\nonumber
\end{gather}
Of course, $H^s_0$ is the space of $H^s$ functions whose integral is zero and $\operatorname{Con}$ stands for the constants. 
We shall use this decomposition on $f$ and its material derivatives. To simplify calculations we shall assume that $\int_\Om 1 = 1$.

First we note that
\begin{gather}
\int_\Om \dot{f} = \int_\Om \de(u) = 0 
\label{f_dot_2_zero}
\end{gather}
so
\begin{gather}
(\dot{f})_2 = 0 \text{ and } \dot{f} = (\dot{f})_1  .
\label{dot_f_equal_dot_f_1}
\end{gather}
Note also that 
\begin{gather}
 f_2(t) = f_2(0)  + \int_0^t \partial_t f_2 = f_2(0) + \int_0^t \int_\Om \partial_t f=f_2(0) + \int_0^t\int_\Om \dot{f}
 - \int_0^t \int_\Om \nabla_u f.
\label{expression_f_2}
\end{gather}
Therefore
\begin{gather}
 |f_2(t)| \leq |f_2(0)| + \int_0^t \Big |\int_\Om \nabla_u f \Big | \leq |f_2(0)| + K  \int_0^t \n f \n_0.
\label{int_by_parts_argument}
\end{gather}
To obtain this last inequality we integrated by parts and used the boundary condition,
\begin{align}
\begin{split}
\int_\Om \nabla_u f & = -\int_\Om \dive (u) f + \int_{\partial \Om} \langle u, \nu \rangle f \\
& =  -\int_\Om \dive(u) f,
\end{split}
\nonumber
\end{align}
and then invoked the Cauchy-Schwarz inequality,
\begin{align}
\begin{split}
\left| \int_\Om \dive (u) f \right| \leq \p \dive u \p_0 \p f \p_0 \leq   \p u \p_1 \p f \p_0 ,
\end{split}
\nonumber
\end{align}
absorbing the norm of $u$ (which is bounded because of Assumption  \ref{extra_assump_u}) into the constant $K$. 

But for any function $g$, and any $s\geq 0$
\begin{gather}
 \n g \n_s \leq K ( \n g_1 \n_s + \n g_2 \n_s ) .
\nonumber
\end{gather}
Hence 
\begin{gather}
 |f_2(t)| \leq |f_2(0)| + K  \int_0^t  (\n f_2 \n_0 + \n f_1 \n_0 ) .
\label{ineq_f_2_before_iteration}
\end{gather}
Let
\begin{gather}
 \n g \n_{s\max} = \max\{ \n g(\tau) \n_s | ~0\leq \tau \leq t\} .
\nonumber
\end{gather}
Then iterating inequality (\ref{ineq_f_2_before_iteration}) we get
\begin{gather}
 |f_2(t)| \leq e^{Kt}|f_2(0)| + \n f_1 \n_{0\max}(e^{Kt} - 1) .
\label{f_2_t_iterated}
\end{gather}
From this  and (\ref{dot_f_equal_dot_f_1}) we find
\begin{gather}
 \n f(t) \n_s \leq K ( |f_2(0)| + \n f_1 \n_{s\max} ) .
\label{estimate_f_f_2_zero_f_1}
\end{gather}
We now proceed to get similar inequalities for the higher material derivatives of $f$.
\begin{gather}
 |(\ddot{f})_2| = \Big | \int_\Om (\partial_t \dot{f} + \nabla_u \dot{f} ) \Big | \leq K \n \dot{f} \n_0 ,
\nonumber
\end{gather}
where we have integrated by parts as we did in the derivation of (\ref{int_by_parts_argument}).
Therefore
\begin{gather}
 \n \ddot{f} \n_s \leq K ( \n \ddot{f}_1 \n_s + \n \dot{f} \n_0 ) .
\label{estimate_f_ddot_s}
\end{gather}
Also
\begin{gather}
 (\dddot{f})_2 = \int_\Om(\partial_t^2 \dot{f} + 2 \partial_t \nabla_u \dot{f} + \nabla_u \nabla_u \dot{f} ) .
\nonumber
\end{gather}
But
\begin{gather}
 \int_\Om \partial_t^2 \dot{f} = 0 \,\, \text{ and } \,\,  \Big | \int_\Om \nabla_u \nabla_u \dot{f} \Big | \leq K \n \dot{f} \n_3 .
\nonumber
\end{gather}
Furthermore
\begin{gather}
 2\int_\Om \partial_t \nabla_u \dot{f} = 2 \int_\Om \de(u) \partial_t \dot{f} = 2 \int_\Om \dot{f}\partial_t \dot{f}=
2\int_\Om \dot{f}(\ddot{f} - \nabla_u f) .
\nonumber
\end{gather}
And since $\n \dot{f} \n_3$ is assumed to be bounded
\begin{gather} 
\Big | \int_\Om \dot{f}(\ddot{f} - \nabla_u f) \Big | \leq K(\n \ddot{f} \n_2 + \n \dot{f} \n_3 ) .
\nonumber
\end{gather}
It follows that
\begin{gather}
 | (\dddot{f})_2 | \leq K (\n \ddot{f} \n_2 + \n \dot{f} \n_3 ),
\nonumber
\end{gather}
so
\begin{gather}
 \n \dddot{f} \n_1 \leq K ( \n \ddot{f} \n_2 + \n \dot{f} \n_3 + \n (\dddot{f})_1 \n_1 ) .
\label{estimate_f_dddot_s}
\end{gather}
Combining (\ref{dot_f_equal_dot_f_1}), (\ref{estimate_f_f_2_zero_f_1}), (\ref{estimate_f_ddot_s}) and
(\ref{estimate_f_dddot_s}) we find that the norms of the first components of $f$ and its material derivatives
(in the decomposition (\ref{decomp_H_s})) bound the norms of $f$ and its derivatives. Specifically we get, after 
multiplication by various powers of $k$,
\begin{gather}
 k^2 \n f\n_4 + k^\frac{3}{2} \n \dot{f} \n_3 + k \n \ddot{f} \n_2 + k^\frac{1}{2} \n \dddot{f} \n_1 \nonumber \\
\leq
K\big ( k^2\n f_1\n_{4\max} + k^\frac{3}{2} \n (\dot{f})_1 \n_3 + k \n (\ddot{f})_1 \n_2 +
k^\frac{1}{2} \n (\dddot{f})_1 \n_1 + k^2 |f_2(0)| \big ) .
\nonumber
\end{gather}
From this inequality we know that it is sufficient to estimate the $H_0^s$ components. This is equivalent to working with the 
functions modulo additive constants. Therefore, for the rest of this section we shall work modulo constants; that is, we will 
simply equate $\n g \n_s$ and $\n g_1 \n_s$.

To get bounds for $f$ and its derivatives we will need some results about elliptic boundary value problems. First we consider 
the Neumann problem for the Laplacian. We let $N: H^s(\Om) \rar H^{s-\frac{3}{2}}(\partial \Om)$ ($s > \frac{3}{2}$) be the 
operator defined by:
$Ng = R(\nabla_\nu g)$, where $R: H^{s-1}(\Om) \rar H^{s-\frac{3}{2}}(\partial \Om)$ is simply the restriction of a 
function to $\partial \Om$.

It is well known that $N$ is continuous and surjective and admits a continuous right inverse $G_\Delta:
H^{s-\frac{3}{2}}(\partial \Om) \rar H^s(\Om)$ where $G_\Delta$ solves the Neumann problem, i.e., $G_\Delta h = g$ is the solution of
\begin{gather}
\begin{cases}
  \Delta g = 0, \\
  R \nabla_\nu g = h .
  \end{cases}
\nonumber
 \end{gather}
Also it is well known that $\Delta: H_\nu^s(\Om) \rar H^{s-2}(\Om)$ ($s\geq 2$) is bijective (modulo additive constants) where
\begin{gather}
 H_\nu^s(\Om) = \{ g \in H^s(\Om) ~|~ R\nabla_\nu g = 0 \} .
\nonumber
\end{gather}
We denote its inverse by
\begin{gather}
 \Delta^{-1}: H^{s-2} \rar H_\nu^s .
\nonumber
\end{gather}
From this we conclude that for any $g \in H^s(\Om)$, $s\geq 2$, we get
\begin{gather}
 g = \Delta^{-1}\Delta g + G_\Delta Ng 
\nonumber
\end{gather}
and it follows that
\begin{gather}
 \n g \n_s \leq K ( \n \Delta g \n_{s-2} + \n Ng \n_{\partial,s-\frac{3}{2}} ).
\label{basic_elliptic_estimate}
\end{gather}
Now we shall
 derive an inequality like (\ref{basic_elliptic_estimate}) using $L$ instead of $\Delta$. 
Since $\n c^2 \n_4 \leq Kk$ we know that $L: H^s \rar H^{s-2}$ has operator norm 
bounded by $Kk$ for $2 \leq s \leq 5$. Also from (\ref{estimate_c2_k}) we find
 that $\frac{1}{k}L - \Delta = \de(\frac{c^2}{k} - 1)\nabla: H^s \rar H^{s-2}$ has operator 
norm bounded by $\frac{K}{\sqrt{k}}$. Therefore $\frac{1}{k} L \Delta^{-1} - I : H^{s-2} \rar H^{s-2}$ has norm 
bounded by $\frac{K}{\sqrt{k}}$, where $I$ means the identity operator. But since one can invert operators near $I$ by a 
Neumann series we find that for any $\la > 1$, if $\sqrt{k} > \la K$, then $\frac{1}{k} L \Delta^{-1}$ has an 
inverse whose norm is less than $\frac{\la}{\la-1}$. Also $L: H_\nu^s \rar H^{s-2}$ equals $k(\frac{1}{k}L\Delta^{-1})\Delta$, 
so for large $k$,
\begin{gather}
 L:H^s_\nu \rar H^{s-2}
\nonumber
\end{gather}
is bijective and $L^{-1} = \frac{1}{k} \Delta^{-1} (\frac{1}{k} L \Delta^{-1})^{-1}$ has norm bounded by $\frac{K}{k}$.

Furthermore we can define 
$G_L: H^{s-\frac{3}{2}} (\partial \Om) \rar H^s(\Om)$ ($2\leq s \leq 5$) which 
plays a role analogous to $G_\Delta$. We let $G_L = G_\Delta - L^{-1}LG_\Delta$. Then one sees easily that $G_Lh$ is a 
solution to the boundary value problem:
\begin{gather}
\begin{cases}
 Lg = 0, \\
 R \nabla_\nu g = h.
 \end{cases}
\nonumber
\end{gather}
Also $G_L: H^{s-\frac{3}{2}} (\partial \Om) \rar H^s(\Om)$ clearly has a norm bounded independent of $k$, i.e., 
\begin{gather}
 \n G_L \n \leq \n G_\Delta \n + \n L L^{-1} G_\Delta \n \leq \n G_\Delta \n (1 + K \frac{1}{k} k) .
\nonumber
\end{gather}
As with the Laplacian, we get the equation
\begin{gather}
 g = L^{-1} L g + G_L N g
\label{decomp_g_L}
\end{gather}
and from this we get the inequality
\begin{gather}
 \n g \n_s \leq K (\frac{1}{k} \n Lg\n_{s-2} + \n Ng \n_{\partial, s-\frac{3}{2}} ).
\label{basic_L_estimate}
\end{gather}
This inequality will be our main tool used to estimate norms of $f$ and its derivatives. We begin with the norm of $f$ itself. From 
our basic equations we find that
\begin{gather}
L f = \ddot{f} - F \,\, \text{ and }\,\,  Nf = qS_2(u).
\nonumber
\end{gather}
Therefore
\begin{gather}
 \n f \n_4 \leq \frac{K}{k}(\n \ddot{f} \n_2 + 1).
\label{bound_f_L}
\end{gather}
Also differentiating these equations we get
\begin{gather}
 L \dot{f} = \dddot{f} - (L_1\dot{f} + \dot{F} ) \,\, \text{ and } \,\,  N\dot{f} = (qS_2)(u)\dot{} + \nabla_{[\nu,u]}f.
\nonumber
\end{gather}
But $\dot{F} = (u^i_j)\dot{}u^j_i + u^i_j(u^j_i)\dot{}$ so from (\ref{u_i_j_dot}) we get
\begin{gather}
 \n \dot{F} \n_2 \leq K(k\n f \n_4 + 1 ) \leq K\sqrt{k}
\nonumber
\end{gather}
and from the formula (\ref{computing_L_1}) for $L_1$ we find
\begin{gather}
 \n L_1 f \n_1 \leq K k\n f \n_4 \leq K \sqrt{k}.
\nonumber
\end{gather}
From (\ref{S_2_dot_terms_S_2}) and (\ref{various_estimates_q_dot})
we find 
\begin{gather}
 \n (qS_2(u))\dot{} \n_{\partial,\frac{3}{2}} \leq K (\frac{1}{k} + \n f \n_4 ) \leq \frac{K}{\sqrt{k}}
\nonumber
\end{gather}
and of course
\begin{gather}
 \n \nabla_{[\nu,u]} f \n_{\partial,\frac{3}{2}} \leq K \n f \n_4 \leq \frac{K}{\sqrt{k}}.
\nonumber
\end{gather}
Combining these four inequalities and using (\ref{basic_L_estimate}), we get
\begin{gather}
 \n \dot{f} \n_3 \leq K(\frac{1}{k}\n \dddot{f} \n_1 + \frac{1}{\sqrt{k}}).
\label{bound_f_dot_L}
\end{gather}
From the definition of $E$, we know $\n L\ddot{f} \n_0 \leq \sqrt{E}$, so to bound $\ddot{f}$ we need only look at 
boundary data. But
\begin{gather}
 N\ddot{f} = (qS_2(u))\,\ddot{} + (\nabla_{[\nu,u]}f)\dot{} + \nabla_{[u,\nu]} \dot{f}.
\nonumber
\end{gather}
Since $\n \dot{f} \n_3$ is assumed bounded and 
$\n f \n_4 \leq \frac{K}{\sqrt{k}}$, we get $\n N\ddot{f} \n_{\partial,\frac{1}{2}} \leq K$. Therefore
\begin{gather}
 \n \ddot{f} \n_2 \leq K(\frac{1}{k}\sqrt{E} + 1 ).
\label{bound_f_ddot_L}
\end{gather}
Finally we have the obvious inequality
\begin{gather}
 \n \dddot{f} \n_1 \leq K \sqrt{\frac{E}{k}}, 
\label{bound_f_dddot_L}
\end{gather}
so we have bounds for $f$ and its first three material derivatives. Combining 
(\ref{bound_f_L}) and (\ref{bound_f_ddot_L}) we find
\begin{gather}
 \n f \n_4 \leq K (\frac{1}{k} + \frac{1}{k^2} \sqrt{E} )
\label{bound_f_E}
\end{gather}
and combining (\ref{bound_f_dot_L}) and (\ref{bound_f_dddot_L}) we get
\begin{gather}
 \n \dot{f} \n_3 \leq K (\frac{1}{\sqrt{k}} + \frac{1}{k^{\frac{3}{2}}} \sqrt{E} ).
\label{bound_f_dot_E}
\end{gather}
Thus combining (\ref{bound_f_ddot_L})-(\ref{bound_f_dot_E}) we get all the bounds in terms of $E$ itself.

Now we are ready to get bounds on the growth of $f$ and its derivatives.

We let 
\begin{gather}
E_1 = k^4 \n f \n_4^2 + k^3 \n \dot{f} \n_3^2 + k^2 \n \ddot{f} \n_2^2 + k \n \dddot{f} \n_1^2 + k^2.
\nonumber
\end{gather}
Then (\ref{bound_f_ddot_L})-(\ref{bound_f_dot_E}) tell us that
\begin{gather}
 E_1 \leq K(E + k^2).
\nonumber
\end{gather}
Also from (\ref{ineq_Q}) we find $Q \leq \frac{K}{\sqrt{k}} E_1$ and from 
(\ref{estimate_growth_E}) we get
\begin{gather}
 E(t) + P(t) \leq E(0) + P(0) +\frac{K}{\sqrt{k}} (E_1(0)+E_1(t)) + K\int_0^t E_1(s) ds .
\label{E_plus_P_positive}
\end{gather}
But since $P(t)$ is positive and since $E(0) + P(0) \leq K E_1(0)$ using
(\ref{E_plus_P_positive}) we get
\begin{gather}
 E_1(t) \leq K E_1(0) + \frac{K}{\sqrt{k}}(E_1(0) + E_1(t) ) + K \int_0^t E_1(s)ds .
\nonumber
\end{gather}
Then if $\sqrt{k} > K$ by iterating this inequality we can get a constant $K_1$ depending only on $K$ such that
\begin{gather}
 E_1(t) \leq K_1 E_1(0) e^{K_1 t} .
\nonumber
\end{gather}
We assumed that $\n f(0) \n_4 \leq \frac{K}{k}$, $\n \dot{f}(0) \n_3 \leq \frac{K}{\sqrt{k}}$ and this implies that
$\n \ddot{f}(0) \n \leq K$ (using (\ref{eq_f_estimates})) and that $\n \dddot{f}(0)\n_1 \leq K\sqrt{k}$. These assumptions 
tell us that $E_1(0) \leq K k^2 $.
This together with the assumptions made at the beginning of the section (i.e. the bound (\ref{extra_assump_u}) on $u$ and 
Assumption \ref{assump_on_f} on and $f$ and material derivatives of $f$) tell us that for large $k$
\begin{gather}
 E_1(t) \leq K k^2 K_1 e^{K_1 t}.
\label{final_ineq_E_1}
\end{gather}
But the assumptions (\ref{extra_assump_f})-(\ref{extra_assump_f_dddot}) are equivalent to the inequality:
$E_1(t) \leq 4a^2_4 k^3 + k^2$. Therefore the assumptions $E_1(0) \leq K k^2$, (\ref{extra_assump_u}), 
and $E_1(t) \leq 4a_4^2k^3 + k^2$, imply $E_1(t) \leq K K_1 k^2 e^{K_1t}$. Let us now fix $T>0$. Then if $k$ is 
sufficiently large, for all $t \in [0,T]$, we have $K K_1 k^2 e^{K_1t} < 4a_4^2 k^3 + k^2$. Therefore the 
inequalities $E_1(0) \leq K k^2$, (\ref{extra_assump_u}), and $E_1(t) \leq 4a_2 k^3 + k^2$ imply $E_1(t) \leq KK_1 k^2 e^{K_1 t}$. 
Since $E_1$ is a continuous function, this tells us that if (\ref{hypothesis_at_0}) and (\ref{extra_assump_u}) hold, 
then for large $k$, $E_1(t) \leq K K_1 k^2 e^{K_1 t}$ for all $t \in [0,T]$. That is Assumption \ref{assump_on_f} is 
no longer necessary. The inequalities 
(\ref{hypothesis_at_0}) and (\ref{extra_assump_u}) imply that for large $k$, (\ref{final_ineq_E_1}) holds. 
But since $E_1$ bounds $f$ and its material derivatives, (\ref{final_ineq_E_1}) implies
\begin{gather}
 \n f \n_4 \leq \frac{K}{k},~\n \dot{f} \n_3 \leq \frac{K}{\sqrt{k}},~\n \ddot{f} \n_2 \leq K \text{ and } 
\n \dddot{f} \n_1 \leq K \sqrt{k} .
\nonumber
\end{gather}
This $K$ is the desired $a_5$ of (\ref{ineq_a_5_f})-(\ref{ineq_a_5_f_dddot}).

Thus we have shown that the assumptions (\ref{hypothesis_at_0}) and (\ref{extra_assump_u}) imply that there is a 
constant $a_5$ depending only on $\Om$, $T$, $a_2$ and $a_3$ such that (\ref{ineq_a_5_f})-(\ref{ineq_a_5_f_dddot}) hold for large $k$.

\begin{rema} The argument given to prove (\ref{ineq_a_5_f})-(\ref{ineq_a_5_f_dddot}) is similar to that used for Proposition 12.13 of \cite{E2}  and for  Lemma 4.6 of \cite{E4}.
\end{rema} 

\section{Proofs of Main Theorems \label{proofs}}

We begin this section with the proof of Theorem \ref{first_big_theo}. It will be a 
combination of the 
results of \cite{E3} with the estimates of section \ref{estimates}. As in section \ref{estimates}, we shall
 restrict 
ourselves to the case $s=3$. The proof for general $s$ is essentially the same.

We first consider for fixed $k$ the initial data $u_{0k}$, $\rho_{0k}$ and assume that $k$ is large 
enough so that $\rho_{0k}$ is near $1$, and so that for each $x\in \Om$, $|u_{0k}(x)|^2 < p^\prime(\rho_{0k}(x))$. This 
can be done because of assumption
2a) 
of Theorem \ref{first_big_theo} which implies that the $C^0$-norm of $u_{0k}$ is bounded 
independent of $k$ and because of 2c) of that theorem which says that
\begin{gather}
 \n \log \rho_{0k} \n_4 \leq \frac{a_2}{k} .
\nonumber
\end{gather}
Then from \cite{E3} we know that there exists an interval $[0,T]$ and $H^3$ functions $u$, and $\rho$ 
defined on $[0,t]\times \Om$ which satisfy the compressible motion problem with initial data $u_{0k}$, $\rho_{0k}$. Thus we 
need only show that $\rho(t) \in H^4$ and $\de u(t) \in H^3$.

In order to show this we must first approximate $u_{0k}$, $\rho_{0k}$ by $H^4$ functions which also satisfy the 
compatibility conditions. That is we want a sequence of $H^4$ functions $(u_n,\rho_n)$ such that\\

1) $u_n \rar u_{0k}$ in $H^3$ and $\de u_n \rar \de u_{0k}$ in $H^3$ as $n\rar \infty$. \\

2) $\rho_n \rar \rho_{0k}$ in $H^4$ as $n\rar \infty$. \\

3) For each $n$, the pair $(u_n,\rho_n)$ satisfies the compatibility conditions up to order $3$. \\

\begin{rema}
We notice that here we require $(u_n,\rho_n)$ to satisfy the compatibility conditions up to order $3$ 
for a solution in $H^4$, in contrast with the statement of Theorem \ref{first_big_theo} where compatibility conditions up to order
$s$ are required for a solution with velocity in $H^s$. The reason for this is as follows.

As seen, the compatibility conditions involve the time-derivative of the solution at time zero.
In general, for a solution $u$ and $\rho$ in $H^s$ 
with initial data in $H^s$
(i.e., we are now talking about the usual case
where $u$ and $\rho$ are equally differentiable and no extra differentiability 
of $\rho_0$ nor $\dive (u_0 ) \in H^s$ are assumed)
it is natural to expect compatibility conditions
up to order $s-1$. In fact, as mentioned in the paper, the $k^{\text{th}}$ order compatibility 
condition involves $k$ derivatives of $\rho_0$ and $k-1$ derivatives of $u_0$
and $\dive ( u_0 )$. Therefore, for $u_0 \in H^s$, without assuming 
$\dive (u_0 ) \in H^s$, we can only expect to have $s-1$ compatibility conditions,
otherwise more than $s$ derivatives of $u_0$ would be involved.
This is what is assumed, for instance, in \cite{E4} and \cite{S}.
Our goal here with the sequence $(u_n,\rho_n)$ is simply to construct
an $H^4$ solution (for the  $H^4$ initial data $(u_n,\rho_n)$), which then, in light of the estimates
on $f$, can be used to show the extra regularity of the original $\rho$ solution.
Therefore, in view of what was just above, this initial data $(u_n(0), \rho_n(0))$, being in $H^4$,
 has to satisfy the compatibility 
conditions only up to order $3$.

Notice that there are further constraints on the initial data $(u_n(0), \rho_n(0))$, namely,
$\dive ( u_n(0) )$ is also in $H^3$. But this is required in order for this initial data
to be close in $H^3$ to our prepared initial data $(u_0,\rho_0)$, and is not required
for the general fact that a $H^4$ initial data yields an $H^4$ solution. Since this general
fact only requires, as explained, compatibility conditions up to order $3$, that is all we require
for $(u_n(0), \rho_n(0))$.
\end{rema}

We now proceed to construct such a sequence.

First we decompose $u_{0k}$ into its divergence free and gradient parts:
\begin{gather}
 u_{0k} = w_{0k} + \nabla g_{0k}.
\nonumber
\end{gather}
We let $\tilde{w}_{n}$ be a sequence of $H^4$ divergence free vector fields which converge to $w_{0k}$ in $H^3(\Om,\RR^n)$.
Such a sequence is easily found; one simply approximates $w_{0k}$ by a sequence of $H^4$ vector 
fields and lets the $\{ \tilde{w}_n \}$ be $P$ applied to this sequence. Now 
let $\tilde{u}_n = \tilde{w}_n + \nabla g_{0k}$ and $\tilde{\rho}_n = \rho_{0k}$. Then $\de \tilde{u}_n = -\Delta g_{0k} = \de u_{0k}$, 
and $\Delta g_{0k} \in H^3$ implies $\nabla g_{0k} \in H^4$, so clearly the sequence $(\tilde{u}_n,\tilde{\rho}_n)$ satisfies the 
conditions 1) and 2). We now show that it can be modified so that it will satisfy 3) as well.

Of course $\tilde{u}_n$ satisfies the zeroth compatibility condition because
\begin{gather}
 \langle \tilde{u}_n, \nu \rangle = \langle \nabla g_{0k}, \nu \rangle = 0 \text{ on } \partial \Om
\nonumber
\end{gather}
We let $\varphi_i = \varphi_i(u,\rho)$ be the function defining the i$^{\text{ih}}$ compatibility 
condition; that is we define $\varphi_i$ so that the i$^{\text{th}}$ condition for initial data $(u,\rho)$ is 
\begin{gather}
 \varphi_i(u,\rho) = 0
\nonumber
\end{gather}
Then from section \ref{func_space} we know that (letting $f=\log \rho$)
\begin{gather}
 \varphi_1(u,\rho) = \langle \nabla_u u + c^2(\rho) \nabla f, \nu \rangle
\label{phi_1}
\end{gather}
To get formulas for $\varphi_i$, $i > 1$ is it convenient to use the quadratic form
\begin{gather}
 S_2(v_1,v_2) = -\langle v_1, \nabla_{v_2} \nu \rangle
\nonumber
\end{gather}
which we defined on section \ref{func_space}.

Then (\ref{second_ord_comp_cond}) tells us that 
\begin{gather}
 \varphi_2(u,\rho) = -2S_2(u, \nabla_u u + c^2 \nabla f) + p^{\prime\prime}(\rho) \rho (-\nabla_u f + \de u) \nabla_\nu f +
c^2 \nabla_\nu (-\nabla_u f + \de u).
\label{phi_2}
\end{gather}
To compute $\varphi_3$ we take a time derivative of (\ref{another_der_first_ord}) to get
\begin{gather}
 \varphi_3(u,\rho) = -2S_2(\partial_t u, \partial_t u) - 2S_2(\partial_{tt} u, u ) 
+ \partial_t^2(c^2) \nabla_\nu f + 2\partial_t(c^2) \nabla_\nu \partial_t f + c^2 \nabla_\nu \partial_t^2 f.
\label{phi_3}
\end{gather}
From our usual computations we get
\begin{gather}
 \partial_t u = -(\nabla_u u + c^2 \nabla f) ,
\nonumber \\
\partial_t f = -\nabla_u f + \de u ,
\nonumber \\
\partial_t (c^2) = p^{\prime\prime}(\rho) \rho \partial_t f ,
\nonumber \\
\partial_t^2 u = \nabla_{\partial_t u } u + \nabla_u \partial_t u - \partial_t(c^2)\nabla f - c^2 \nabla \partial_t f,
\nonumber \\
\partial_t^2 f = - \nabla_{\partial_t u} f - \nabla_u \partial_t f + \de \partial_t u =
- \nabla_{\partial_t u} f - \nabla_u \partial_t f - \de \nabla_u u - \de c^2 \nabla f,
\nonumber
\end{gather}
and substituting these expressions for the time derivatives in (\ref{phi_3}) gives an expression for $\varphi_3(u,\rho)$. 
Other $\varphi$'s are computed in the same way, each one involving one more derivative of (\ref{another_der_first_ord}).

Now let $u \in H^3$ (with $\langle u , \nu \rangle = 0$), $\rho \in H^4$ and assume $\de u \in H^3$. Then since $\varphi_i$ 
involves $i$ derivatives of $\rho$ but only $i-1$ derivatives of $u$ and $\de u$, we find
\begin{gather}
\varphi_i(u,\rho) \in H^{3\half\, - \, i}(\partial \Om, \RR)
\nonumber
\end{gather}
Letting $u=w+\nabla g$ be the usual decomposition we can think of each $\varphi_i$ as a 
function of $w$, $g$, and $f$. In this way we get
\begin{gather}
 \varphi_i: P(H^3(\Om,\RR^n))\times \nabla (H_\nu^5(\Om, \RR)) \times H^4(\Om, \RR) \rar H^{3\half \,  - \, i}(\partial \Om, \RR)
~~(i=1,2,3)
\nonumber
\end{gather}
where $H_\nu^5(\Om,\RR) = \{ g \in H^5 ~|~ \nabla_\nu g = 0 \text{ on } \Om \}$. We shall call the domain of this map $X^{3,4,4}$. We 
use $H_\nu^5(\Om,\RR)$ to insure that $\langle u, \nu \rangle = 0$ on $\partial \Om$ and that $\de u = -\Delta g$ is in $H^3$. 
Then if we let $\widetilde{\Phi} = (\varphi_1,\varphi_2,\varphi_3)$ we get a smooth map
\begin{gather}
\widetilde{\Phi}: X^{3,4,4} \rar H^{2\half}(\partial \Om, \RR) \times H^{1\half}(\partial \Om, \RR) \times
H^{\frac{1}{2}}(\partial \Om, \RR)
\label{tilde_Phi}
\end{gather}
We let $Y$ denote the range of this map. Of course $\widetilde{\Phi}(u_{0k},\rho_{0k})=0$ so 
$\lim_{n\rar \infty} \widetilde{\Phi}(\tilde{u}_n,\tilde{\rho}_n) = 0$.

Now we show the existence of $(u_n,\rho_n)$ in three steps. First we 
let $$X^{4,4,4} = \{ (w,\nabla g, f) \in X^{3,4,4,} ~|~ w \in H^4 \}$$ and let $\Phi$ be 
the restriction of $\widetilde{\Phi}$ to $X^{4,4,4}$. Second we show that the derivative of the 
map $\Phi: X^{4,4,4} \rar Y$ at each point $(\tilde{u}_n,\tilde{\rho}_n)$ is a surjection. Third we 
use an implicit function theorem type of argument to show that for large $n$ there 
exists $(u_n,\rho_n)$ near $(\tilde{u}_n,\tilde{\rho}_n)$ such that $\Phi(u_n,\rho_n)=0$. This 
sequence $\{(u_n,\rho_n)\}$ will satisfy the required conditions 1), 2) and 3).

We proceed with the second step, computing the derivative of $\Phi$. From 
(\ref{phi_1}) we see that (letting $\tilde{f} = \log\tilde{\rho}$).
\begin{gather}
D_{(\tilde{u},\tilde{\rho})} \varphi_1(w,\nabla g, f) = -2S_2(\tilde{u}, w + \nabla g) 
+ p^{\prime\prime}(\tilde{\rho}) f \nabla_\nu \tilde{f} + c^2(\tilde{\rho}) \nabla_\nu f
\nonumber
\end{gather}
where $D_{(\tilde{u},\tilde{\rho})} \varphi_1(w,\nabla g, f)$ means the 
derivative of $\varphi_1$ at $(\tilde{u},\tilde{\rho})$ in direction $(w,\nabla g, f)$.

Assuming that $\n \tilde{f}\n_4 \leq \frac{K}{k}$ we find that 
\begin{gather}
 D_{(\tilde{u},\tilde{\rho})} \varphi_1(w,\nabla g, f)=A_{11}(w,\nabla g, f) + c^2(\tilde{\rho})\nabla_\nu f
\label{D_phi_1}
\end{gather}
where $A_{11}: X^{4,4,4} \rar H^{2\frac{1}{2}}(\partial \Om, \RR)$ is a bounded 
linear operator depending on $(\tilde{u},\tilde{\rho})$ but with operator norm bounded by some $K$ independent of $k$.

Similarly from (\ref{phi_2}) we find
\begin{gather}
D_{(\tilde{u},\tilde{\rho})} \varphi_2(w,\nabla g, f) = A_{21}(w,\nabla g) + k A_{22} f + c^2(\tilde{\rho}) \nabla_\nu \Delta g,
\label{D_phi_2}
\end{gather}
and from (\ref{phi_3})
\begin{gather}
 D_{(\tilde{u},\tilde{\rho})} \varphi_3(w,\nabla g, f) = A_{31}(w,\nabla g) + kA_{32}(\nabla g) + k A_{33} f - c^2(\tilde{\rho}) 
\nabla_\nu Lf
\label{D_phi_3}
\end{gather}
where $L = -\de c^2(\tilde{\rho}) \nabla$, and $A_{21},~A_{22}: X^{4,4,4} \rar H^{1\half}(\partial \Om,\RR)$
are bounded operators with bounds independent of $k$, as are
$A_{31},~A_{32},~A_{33}: X^{4,4,4} \rar H^{\frac{1}{2}}(\partial \Om,\RR)$. Furthermore since the $\varphi_i$'s involve no more 
than two derivatives of $\tilde{u}$ and $\de \tilde{u}$, the bounds on the operators $A_{ij}$ are 
uniform for all $(\tilde{u},\tilde{\rho})$ which are near $(u_{0k},\rho_{0k})$ in $X^{3,4,4}$. 
Formulas (\ref{D_phi_1})-(\ref{D_phi_3}) give an expression for the linear operator $D_{(\tilde{u},\tilde{\rho})} \Phi: X^{4,4,4} \rar Y$. 
From the above we know that it is uniformly bounded for all $(\tilde{u},\tilde{\rho})$ in an $X^{3,4,4}$-neighborhood 
of $(u_{0k},\rho_{0k})$. 

Computing in the same way we get a formula for $D^2_{(\tilde{u},\tilde{\rho})}\Phi$, the second 
derivative of $\Phi$ at $(\tilde{u},\tilde{\rho})$ which is a bilinear map from $X^{4,4,4} \times X^{4,4,4} \rar Y$. Using the 
same reasoning as above, we find that $D^2_{(\tilde{u},\tilde{\rho})}\Phi$ is bounded as a bilinear  operator and again 
we see that the bound is uniform for $(\tilde{u},\tilde{\rho})$ near $(u_{0k},\rho_{0k})$ in $X^{3,4,4}$.

Now we shall use (\ref{D_phi_1})-(\ref{D_phi_3}) to find a right inverse to $D_{(\tilde{u},\tilde{\rho})} \Phi$. This 
amounts to finding a solution $(w,\nabla g, \nabla f)$ to the linear equations
\begin{gather}
 A_{11}(w,\nabla g, f) + c^2\nabla_\nu f = h_1 \label{sys_1} \\
A_{21}(w,\nabla g) + k A_{22} f + c^2 \nabla_\nu \Delta g = h_2 \label{sys_2} \\
A_{31}(w,\nabla g) + k A_{32}(\nabla g)+kA_{33} f - c^2 \nabla_\nu L f = h_3 \label{sys_3}
\end{gather}
where $h_i \in H^{3\half \, - \, i}(\partial \Om,\RR)$, $i=1,2,3$.

When $k$ is large this system can be solved by a Neumann series technique. To begin,
 we let $w=0$ so $u=\nabla g$. Then we let
\begin{gather}
 f_1 = G_L(\frac{1}{c^2} h_1) - L^{-1}G_L(\frac{1}{c^2} h_3) 
\nonumber \\
g_1 = \Delta^{-1} G(\frac{1}{c^2}h_2 - \frac{k}{c^2}A_{22}f_1 )
\nonumber
\end{gather}
where $G$, $G_L$, $\Delta^{-1}$ and $L^{-1}$ are as in section \ref{estimates}.

Then
\begin{gather}
 \frac{1}{c^2}A_{11}(0,\nabla g_1,f_1) + \nabla_\nu f_1 = \frac{1}{c^2}h_1 + \frac{1}{c^2} A_{11}(0,\nabla g_1, f_1) ,
\nonumber \\
\frac{1}{c^2}A_{21}(0,\nabla g_1) + \frac{k}{c^2} A_{22}f_1 + \nabla_\nu \Delta g_1 
= \frac{1}{c^2} h_2 + \frac{1}{c^2}A_{21}(0,\nabla g_1) ,
\nonumber \\
\frac{1}{c^2}A_{31}(0,\nabla g_1) + \frac{k}{c^2} A_{32}(\nabla g_1) + \frac{k}{c^2}A_{33}f_1 - \nabla_\nu Lf_1 =
\frac{1}{c^2}(h_3 + A_{31}(0,\nabla g_1) + k A_{32}(\nabla g_1) + k A_{33} f_1 ) \nonumber \\
= \frac{1}{c^2}(h_3 + R_1),
\nonumber
\end{gather}
where $R_1$ is defined by the last equality.
Therefore let
\begin{gather}
 f_2 = f_1 + G_L\frac{1}{c^2}A_{11}(0,\nabla g_1,f_1) + L^{-1}G_L \frac{1}{c^2} (R_1)
\nonumber
\end{gather}
and let
\begin{gather}
g_2 = g_1 + \Delta^{-1} G(\frac{1}{c^2} A_{21}(0,\nabla g_1) - \frac{k}{c^2} A_{22}(f_2 - f_1) ).
\nonumber
\end{gather}
continuing we get a sequence $\{ g_\ell, f_\ell \}$ and it is clear that 
\begin{gather}
 \n g_\ell - g_{\ell - 1} \n_5 + \n f_\ell - f_{\ell -1 } \n_4 \leq \frac{K}{k^\ell} .
\nonumber
\end{gather}
Therefore for $k$ large we get the limits $g_\ell \rar g$ and $f_\ell \rar f$ and $(0,\nabla g, f)$ satisfies the system
(\ref{sys_1})-(\ref{sys_3}). Furthermore it is clear that
\begin{gather}
 \n f \n_4 + \n \nabla g \n_4 \leq K( \n h_1 \n_{\partial,2\half} + 
 \n h_2 \n_{\partial,1\half} +
\n h_3 \n_{\partial,\frac{1}{2}} )
\nonumber
\end{gather}
where $K$ depends only on the norms of the operators $A_{ij}$. Therefore we have found a right 
inverse to $D_{(\tilde{u},\tilde{\rho})}\Phi$ which is uniformly bounded for $(\tilde{u},\tilde{\rho})$ near $(u_{k0},\rho_{k0})$.

Now we proceed to our third step, the implicit function theorem argument, which will show the existence of the 
sequence $(u_n,\rho_n)$. Our proof follows from the proof of the implicit function theorem. Let $A_n: Y \rar X^{4,4,4}$ be the 
right inverse to $D_{(\tilde{u}_n,\tilde{\rho}_n)}\Phi$ which we constructed above, and let
\begin{gather}
\psi_n: Y \rar Y
\nonumber
\end{gather}
be defined by
\begin{gather}
\psi_n(y) = \Phi(A_n(y) + (\tilde{u}_n,\tilde{\rho}_n) ) - \Phi(\tilde{u}_n,\tilde{\rho}_n) .
\nonumber
\end{gather}
Clearly $\psi_n$ takes zero to zero and its derivative at zero is the identity map from $Y$ to $Y$. Also,
since $D^2\Phi$, and $A_n$ are bounded so is $D^2\psi_n$. Assuming $D^2\psi_n: Y \times Y \rar Y$ has bound $K$, a standard 
estimate tells us that for $\ep < \frac{1}{4K}$, $I-\psi_n: B_{2\ep}(0) \rar B_{\ep}(0)$ is a contraction with Lipshitz 
constant $\frac{1}{2}.$ ($B_r(0)$ is the ball about zero of radius $r$ in $Y.$)

Now given $z \in B_\ep(0)$, let $\theta: Y \rar Y$ be defined by 
\begin{gather}
\theta(y) = y - \psi_n(y) + z = (I-\psi_n)(y) + z.
\nonumber
\end{gather}
Then $\theta: B_{2\ep}(0) \rar B_{2\ep}(0)$ is also a contraction and therefore has a unique fixed point, call it $y_n$.

Given any positive $\ep$ which is less than $\frac{1}{4K}$, we can find $n$ so that $\n \Phi(\tilde{u}_n, \tilde{\rho}_n) \n_Y < \ep$. 
Then let $z=\Phi(\tilde{u}_n,\tilde{\rho}_n)$, so that $y_n$ satisfies:
\begin{gather}
 y_n = y_n - \psi_n(y_n) + \Phi(\tilde{u}_n,\tilde{\rho}_n) = y_n - \Phi( A_n(y_n) + (\tilde{u}_n,\tilde{\rho}_n) ) .
\nonumber
\end{gather}
Thus $\Phi(A_n(y_n) + (\tilde{u}_n,\tilde{\rho}_n) ) = 0$ so $(u_n,\rho_n) = A_n(y_n) + (\tilde{u}_n,\tilde{\rho}_n)$ 
satisfies the compatibility conditions. 
Since $\n y_n \n < 2\ep$, $\n (u_n, \rho_n) - (\tilde{u}_n, \tilde{\rho}_n )\n_{X^{4,4,4}} < 2\ep$. 
Therefore $(u_n,\rho_n)$ converges to $(u_{0k},\rho_{0k})$ in $X^{3,4,4}$, and hence is the desired sequence.

Now that we have the sequence $(u_n,\rho_n)$, Theorem \ref{first_big_theo} follows from the estimates of section \ref{estimates}. 
From \cite{E3} we know that for each pair $(u_n,\rho_n)$ we get curves $(u_n(t),\rho_n(t))$ in $H^4$ defined on some 
interval $[0,T_n)$; we take $T_n$ to be as large as possible.

However since $(u_n,\rho_n) \rar (u_{0k},\rho_{0k})$ in $H^3$ we know that if $n$ is 
large, $\{ (u_n(t), \rho_n(t)) \}$ and $(u(t), \rho(t) )$ all exist as $H^3$ functions on $[0,T]$, and $(u_n(t),\rho_n(t))$ 
converges to $(u(t),\rho(t))$ in $H^3$. Therefore there exist constants $a_2$ and $a_3$ as required in section \ref{estimates} so that 
(\ref{hypothesis_at_0}) and (\ref{extra_assump_u}) hold for the functions $u_n(t)$ and $f_n(t) = \log\rho_n(t)$, and since
$(u_n,\rho_n) \rar (u,\rho)$ in $H^3$ one can choose one set of constants for all $n$. Hence we find a constant $a_5$ such that 
(\ref{ineq_a_5_f})-(\ref{ineq_a_5_f_dddot}) hold for all $f_n$ also. But it is well known (see \cite{K2} for example) that 
if $h_n \rar h$ in $H^s$ and
$\{h_n\}$ is a bounded sequence in $H^{s+1}$, then in fact $h \in H^{s+1}$. Therefore $f_n \rar f = \log \rho$ in $H^3$ implies
$f \in H^4$, and $\dot{f}_n \rar \dot{f}$ implies $\dot{f} \in H^3$. Hence we have $\rho \in H^4$ and $\dot{f} = \de u \in H^3$. 
This completes the proof of theorem \ref{first_big_theo}.

To prove Theorem \ref{second_big_theo} we first state an equivalent theorem and prove the latter using the 
estimates of section \ref{estimates} together with the proof of theorem 5.5 of \cite{E2}.

In order to state the equivalent theorem we will use the map $\zeta(t): \Om \rar \Om$ to describe the compressible 
fluid position at time $t$, and $\eta(t): \Om \rar \Om$ to describe the incompressible position. We think 
of both $\zeta$ and $\eta$ as curves in $\mathscr{D}^s$, with $\zeta(0) = \eta(0) =\operatorname{id}$, the identity 
diffeomorphism. If we use `` $\dot{}$ ''  to denote time derivatives of $\eta$ and $\zeta$ we have (as in section \ref{intro})
\begin{gather}
 \dot{\eta}(t)(x) = v(t,\eta(t)(x)) = v(t)(\eta(t)(x)) 
\label{eta_dot_v}
\end{gather}
so
\begin{gather}
 v(t)(x) = \dot{\eta}(t)\circ(\eta(t))^{-1}(x)
\nonumber
\end{gather}
and
\begin{gather}
 \dot{\zeta}(t)(x) = u(t,\zeta(t)(x)) = u(t)(\zeta(t)(x)) 
\label{zeta_dot_u}
\end{gather}
so
\begin{gather}
 u(t)(x) = \dot{\zeta}(t)\circ(\zeta(t))^{-1}(x)
\label{u_zeta_dot_compose_zeta_inverse}
\end{gather}
where $(\eta(t))^{-1}$ and $(\zeta(t))^{-1}$ are the inverses of the diffeomorphisms $\eta(t),~\zeta(t): \Om \rar \Om$ and
$\circ$ means composition. Therefore
\begin{gather}
 \ddot{\eta}(t)(x) = \partial_t v(t,\eta(t)(x)) + \nabla_v v(t,\eta(t)(x)) = \dot{v}(t,\eta(t)(x)) = \dot{v}(t)(\eta(t)(x))
\nonumber
\end{gather}
and similarly
\begin{gather}
\ddot{\zeta}(t)(x) = \partial_t u(t,\zeta(t)(x)) + \nabla_u u(t,\zeta(t)(x)) = \dot{u}(t,\zeta(t)(x)) = \dot{u}(t)(\zeta(t)(x))
\nonumber
\end{gather}
Hence (\ref{eq_v_no_p}) is equivalent to the equation
\begin{gather}
 \ddot{\eta}(t)(x) = (Q(\nabla_v v))(\eta(t)(x))
\label{eq_eta}
\end{gather}
and (\ref{eq_comp_u}) is equivalent to 
\begin{gather}
 \ddot{\zeta}(t)(x) = -(\frac{1}{\rho} \nabla p)(\zeta(t)(x)) = - (c^2 \nabla f)(\zeta(t)(x))
\label{eq_zeta}
\end{gather}
Since (\ref{eq_zeta}) involves $\rho$ or $f$ we must find a relation between one of them and $\zeta$. If we 
denote by $J(\zeta(t))$ the Jacobian determinant of $\zeta(t)$, a direct computation gives
\begin{gather}  
\frac{\partial}{\partial t } (J(\zeta(t))) (t,x) = J(\zeta(t))(t,x)(\operatorname{div}(u)(t,\zeta(t)(x))) .
\nonumber
\end{gather}
Thus we find that if we let
\begin{gather}
 h(t,x) = - \log(J(\zeta(t)))((\zeta(t)^{-1})(x))
\nonumber
\end{gather}
then $h$ satisfies
\begin{gather}
 \dot{h} = -\de u .
\nonumber
\end{gather}
Therefore since $f$ satisfies (\ref{f_dot_div_u}) with initial 
condition $f(0,x)=f_{0k}(x)$ and since $h(0,x) = \log (J(\operatorname{id})) = 0$ we must have
\begin{gather}
 f(t,x) = f_{0k}(x) + h(t,x)
\label{f_terms_h}
\end{gather}
Combining (\ref{eq_zeta}) and (\ref{f_terms_h}) we find that $(u,f)$ satisfy (\ref{eq_comp_u}) and 
(\ref{f_dot_div_u}) with initial condition $f(0,x)=f_{0k}(x)$ if and only if $\zeta$ satisfies
\begin{gather}
 \ddot{\zeta}(t)(x) = -(c^2\nabla (f_{0k}+h))(t,\zeta(t)(x))
\label{eq_zeta_f_h_c}
\end{gather}
where $h$ is defined in terms of $\zeta$ as above and $c^2$ depends on $\rho$ or $f$ which in turn depends on $h$. 
Also we note that if $\zeta$ and $\eta$ are curves in $\mathscr{D}^s$, and $u$ and $v$ are defined in terms of $\zeta$ 
and $\eta$ using
(\ref{eta_dot_v})-(\ref{zeta_dot_u}), then we find that $u$ and $v$ must satisfy the boundary conditions
\begin{gather}
 \langle u, \nu \rangle = \langle v, \nu \rangle = 0 \text{ on } \partial \Om.
\nonumber
\end{gather}
Now we state the equivalent theorem

\begin{theo}
Assume that $u_{0k}$, $\rho_{0k}$, $p_k$, and $v_0$ are as in Theorem \ref{second_big_theo}. Then there exist an interval
$[0,T]$ and a unique smooth curve $\eta:[0,T] \rar \mathscr{D}^s$ satisfying (\ref{eq_eta}) such 
that $\eta(0) = \operatorname{id}$, the identity diffeomorphism, and $\dot{\eta}(0) = v_0$. Also for each $k$, there 
exists a unique $C^2$ curve
$\zeta_k(t)$ in $\mathscr{D}^s$ defined on an interval $[0,T(k)]$ satisfying (\ref{eq_zeta_f_h_c}), and such that
$\zeta_k(0) = \operatorname{id}$, $\dot{\zeta}_k(0) = u_{0k}$.

Furthermore if $T(k)$ is maximal, then $T(k) > T$ for large $k$, and as $k\rar \infty$, $\zeta_k(t) \rar \eta(t)$ as a $C^1$ 
curve in $\mathscr{D}^s$. In addition
\begin{gather}
 J(\zeta_k(t))\circ \zeta_k(t)^{-1} \rar 1 \text{ in } H^{s+1}
\nonumber
\end{gather}
\label{equiv_second_big_theo}
\end{theo}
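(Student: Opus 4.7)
The plan is to assemble this theorem from the existence results already available and the estimates of Section \ref{estimates}. First, the existence and uniqueness of the smooth curve $\eta: [0,T] \to \mathscr{D}^s$ satisfying (\ref{eq_eta}) with $\eta(0) = \id$ and $\dot{\eta}(0) = v_0$ is exactly the Ebin--Marsden theorem \cite{EM}. For the compressible side, Theorem \ref{first_big_theo} produces an Eulerian solution $(u_k, \rho_k)$ on a maximal interval $[0, T(k))$; I would define $\zeta_k$ as the flow of $u_k$, i.e.~the unique solution of $\dot\zeta_k(t,x) = u_k(t, \zeta_k(t,x))$ with $\zeta_k(0) = \id$. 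Since $u_k(t) \in H^s$ with $s > n/2 + 1$, standard flow theory on $\mathscr{D}^s$ (as in \cite{EM}) makes $\zeta_k$ a $C^1$ curve in $\mathscr{D}^s$, and a further time differentiation, using (\ref{eq_comp_u}) and the identity $f_k(t) = f_{0k} + h_k(t)$ (derived by matching material derivatives as in (\ref{f_terms_h})) linking $\rho_k$ to $J(\zeta_k)$, shows it is $C^2$ and satisfies (\ref{eq_zeta_f_h_c}).

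Next I would prove that $T(k) > T$ for all sufficiently large $k$. The bounds (\ref{ineq_a_5_f})--(\ref{ineq_a_5_f_dddot}) established in Section \ref{estimates}, combined with Assumption \ref{extra_assumption_u_statement} propagated from Theorem \ref{first_big_theo}, give uniform-in-$k$ control of $\n u_k\n_s$, $\n\de u_k\n_s$ and $\n\log \rho_k\n_{s+1}$ on any interval on which the solution exists. Because the local existence time furnished by \cite{E3} depends only on these norms (together with $\rho_k$ being close to $1$ and the subsonic condition $|u_k|<c_k$, both automatic for large $k$ thanks to (\ref{ineq_a_5_f})), a standard continuation argument prevents $T(k)$ from collapsing. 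Choosing $T$ to be the existence time of $\eta$ and running the continuation up to $T$ gives $T(k) > T$ for all large $k$.

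For the convergence $\zeta_k \to \eta$ in $C^1([0,T], \mathscr{D}^s)$, I would use the Hodge decomposition $u_k = P u_k + Q u_k = w_k + \nabla g_k$. Since $\de u_k = \dot f_k$, elliptic regularity for the Neumann problem defining $Q$, together with (\ref{ineq_a_5_f_dot}), gives $\n\nabla g_k\n_s = O(1/\sqrt k)$. Hence it suffices to show $w_k \to v$ in $C^0([0,T], H^s)$. The solenoidal part $w_k$ satisfies (\ref{eq_w}), which differs from equation (\ref{eq_v_no_p}) for $v$ only by terms involving $\nabla g_k$ and its derivatives, each small in $k$. A standard $H^s$ energy estimate for $w_k - v$, relying on the product estimate (\ref{product_estimate}) and the fact that $w_k(0) - v(0) = Pu_{0k} - v_0 \to 0$ in $H^s$ (from the hypothesis $u_{0k} \to v_0$), yields $\n w_k(t) - v(t)\n_s \to 0$ uniformly on $[0,T]$. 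Converting to flows via (\ref{u_zeta_dot_compose_zeta_inverse}) and its analogue for $v$, and using continuity of composition in $\mathscr{D}^s$, gives $\zeta_k \to \eta$ in $C^1([0,T], \mathscr{D}^s)$.

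For the last statement, mass conservation (\ref{eq_comp_rho}) integrates to $J(\zeta_k(t))(x) \cdot \rho_k(t,\zeta_k(t)(x)) = \rho_{0k}(x)$, so $J(\zeta_k(t))\circ\zeta_k(t)^{-1}(y) = \rho_{0k}(\zeta_k(t)^{-1}(y))/\rho_k(t,y)$. Both hypothesis 2c) of Theorem \ref{first_big_theo} and the estimate (\ref{ineq_a_5_f}) give $\n\log\rho_{0k}\n_{s+1}, \n\log\rho_k(t)\n_{s+1} = O(1/k)$, hence $\rho_k(t) \to 1$ and $\rho_{0k} \to 1$ in $H^{s+1}$. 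Composition with $\zeta_k^{-1}$, whose $H^s$ norm is uniformly bounded by step 2, preserves the smallness once one uses that $\rho_{0k}-1$ has an extra derivative than is needed for the composition estimate, and the quotient converges to $1$ in $H^{s+1}$. The main obstacle is step three, the $H^s$ energy estimate for $w_k - v$: the coefficients of (\ref{eq_w}) involve $u_k$ (hence both $w_k$ itself and $\nabla g_k$), and one must carefully track how the $O(1/\sqrt k)$ compressible corrections feed into the Gronwall argument while preserving compatibility of boundary behavior in the limit.
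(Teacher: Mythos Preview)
Your overall strategy---existence from \cite{EM} and Theorem \ref{first_big_theo}, smallness of $\nabla g_k$ via (\ref{ineq_a_5_f_dot}), and a continuation argument for $T(k)>T$---tracks the paper closely. The genuine gap is in your step three, the ``standard $H^s$ energy estimate for $w_k - v$'' in Eulerian coordinates. Subtracting (\ref{eq_v_no_p}) from (\ref{eq_w}) leaves a term of the form $P(\nabla_{w_k - v}\,v)$, whose $H^s$ norm requires control of $\nabla v$ in $H^s$, i.e.\ $v\in H^{s+1}$, which you do not have. This is exactly the derivative loss that makes the solution map for quasilinear hyperbolic equations merely continuous (not Lipschitz) in $H^s$; compare the discussion around (\ref{ivp_z_abs}) and the counterexamples in \cite{K2}. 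With that term present, your Gronwall loop closes only in $H^{s-1}$, and the conclusion $\zeta_k\to\eta$ in $C^1([0,T],\mathscr{D}^s)$ does not follow.

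The paper avoids this by never writing an Eulerian difference equation. Instead it recasts both motions as integral equations in Lagrangian variables, (\ref{int_eq_eta}) and (\ref{int_eq_zeta}), built from the maps $Z$ and $R$ on $\mathscr{D}^s\times H^s$. The crucial input (from \cite{E2}, \cite{EM}) is that $Z$ and $R$ are \emph{smooth} as maps between these spaces---the composition with $\zeta$ and $\zeta^{-1}$ produces a cancellation that undoes the derivative loss. One then compares $\dot\zeta_k$ and $\dot\eta$ directly via (\ref{difference_eta_dot_zeta_dot_k_iteration}) and iterates using only the local Lipschitz constant of $Z$, together with the bounds $\n\nabla g\circ\zeta\n_3,\ \n R(\zeta,\dot\zeta)\n_3\leq a_6/\sqrt{k}$ from Section \ref{estimates}. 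This also supplies the bootstrap that justifies Assumption \ref{extra_assumption_u_statement}: one first \emph{assumes} $(\zeta_k,\dot\zeta_k)$ lies within $\ep$ of $(\eta,\dot\eta)$, which furnishes the constant $a_3$ in (\ref{extra_assump_u}), and then the iteration shows the distance is actually $\ep_2<\ep$. Your step two treats that bound as already given by Theorem \ref{first_big_theo}, which it is not.
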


It is clear that Theorem \ref{equiv_second_big_theo} implies \ref{second_big_theo}; one simply 
let $u_k(t) = \dot{\zeta}_k(t)\circ(\zeta(t))^{-1}$, $v(t)=\eta(t)\circ(\eta(t))^{-1}$. Then $\zeta_k \rar \eta$ in $C^1$ implie that 
$u_k \rar v$ in $C^0$. Also since
\begin{gather}
 \rho_k(t) = \frac{\rho_{0k}}{J(\zeta_k(t))\circ(\zeta_k(t)^{-1})}
\nonumber
\end{gather}
we find $\rho_k \rar 1$ in $H^{s+1}$.

We now proceed with the proof of Theorem \ref{equiv_second_big_theo}. It is essentially the same as the proof of 
Theorem 5.5 of \cite{E2} so we shall go through it rather briefly expecting the reader to refer to \cite{E2} for details.

First we define $Z: \mathscr{D}^3 \times H^3(\Om,\RR^n) \rar H^3(\Om,\RR^n)$ by 
\begin{gather}
 Z(\xi,\al) = (Q(\nabla_{\al\circ\xi^{-1}}P(\al\circ\xi^{-1})))\circ \xi
\nonumber
\end{gather}
as in \cite{E2} section 13. Then since $v=\dot{\eta}\circ\eta^{-1}$ and $P(v) = v$ we find that (\ref{eq_eta}) can be written 
\begin{gather}
 \ddot{\eta} = Z(\eta,\dot{\eta}) .
\nonumber
\end{gather}
Solving this with initial condition $v_0$ is of course the same as solving
\begin{gather}
 \dot{\eta}(t) = v_0 + \int_0^t Z(\eta(s),\dot{\eta}(s))ds .
\label{int_eq_eta}
\end{gather}
We will now find an equation similar to (\ref{int_eq_eta}) for $\zeta = \zeta_k$. Note that
\begin{gather}
\dot{\zeta} = u \circ \zeta = w \circ\zeta + \nabla g \circ \zeta
\nonumber
\end{gather}
where $u = w + \nabla g$ is the decomposition of $u$ into divergence free and gradient
parts.

But from (\ref{eq_w}) we find that 
\begin{gather}
 \frac{\partial w}{\partial t} + \nabla_u w = Q(\nabla_u w) - P(\nabla_w \nabla g)
\nonumber
\end{gather}
and therefore
\begin{gather}
 (w\circ\zeta)\dot{} = (Q(\nabla_u w) - P(\nabla_w \nabla g))\circ \zeta .
\nonumber
\end{gather}
Define $R: \mathscr{D}^3 \times H^3(\Om,\RR^n) \rar H^3(\Om,\RR^n)$ by
\begin{gather}
R(\xi,\al) = (P(\nabla_{\al\circ\xi^{-1}}Q( \al\circ\xi^{-1})))\circ \xi
\nonumber
\end{gather}
as in \cite{E2} section 13. Then
\begin{gather}
 (w\circ\zeta)\dot{} = Z(\zeta,\dot{\zeta}) - R(\zeta,\dot{\zeta})
\nonumber
\end{gather}
so
\begin{gather}
 \dot{\zeta} = P(u_{0k}) + \int_0^t \big ( Z(\zeta(s),\dot{\zeta}(s)) - R(\zeta(s),\dot{\zeta}(s) \big ) ds 
+(\nabla g(t))\circ \zeta(t) .
\label{int_eq_zeta}
\end{gather}
We shall show that this equation is close to (\ref{int_eq_eta}), and from this the theorem will follow.

We let $T_1(k) = \min\{ T, T(k) \}$ and assume that $(\zeta_k,\dot{\zeta}_k): [0,T_1(k)) \rar \mathscr{D}^3 \times H^3(\Om,\RR^n)$ is 
within $\ep$ of the curve $(\eta,\dot{\eta}): [0,T_1(k)) \rar \mathscr{D}^3 \times H^3(\Om,\RR^n)$ .
This gives a constant $a_3$ such that (\ref{extra_assump_u}) holds. The hypothesis of Theorem \ref{first_big_theo} implies 
that (\ref{hypothesis_at_0}) holds so if $k$ is large, the estimates of section \ref{estimates} are valid and we find $a_5$ such that
\begin{gather}
 \n f_k(t) \n_4 \leq \frac{a_5}{k} \text{ and } \n \dot{f}_k (t) \n_3 \leq \frac{a_5}{\sqrt{k}}
\nonumber
\end{gather}
But $\nabla g = \nabla \Delta^{-1} \dot{f}$ and $R(\zeta,\dot{\zeta}) = P(\nabla_w \nabla g)\circ \zeta$ so we get a constant
$a_6$ depending on $\ep$, such that
\begin{gather}
 \n \nabla g \circ \zeta \n_3 \leq \frac{a_6}{\sqrt{k}}
\nonumber
\end{gather}
and
\begin{gather}
 \n R(\zeta,\dot{\zeta} )\n_3 \leq \frac{a_6}{\sqrt{k}}
\nonumber
\end{gather}
Also as $k \rar \infty$, $u_{0k} \rar v_0$, so $P(u_{0k}) \rar v_0$ also. Thus we find
\begin{align}
\begin{split}
\n \dot{\eta}(t) - \dot{\zeta}_k (t) \n_3 \leq & 
\n P(u_{0k}) - v_0 \n_3 + (1+t)\frac{a_6}{\sqrt{k}}
\\
& + 
\int_0^t \n Z(\eta(s),\dot{\eta}(s)) - Z(\zeta_k(s),\dot{\zeta}_k(s)) \n_3 ds .
\end{split}
\label{difference_eta_dot_zeta_dot_k_iteration}
\end{align}
Iterating this inequality we find that for large $k$ we can find $\ep_2 < \ep$ such that if $t \in [0,T_1(k)]$
\begin{gather}
 \n (\zeta_k(t),\dot{\zeta}_k(t)) - (\eta(t),\dot{\eta}(t) ) \n_3 \leq \ep_2 .
\nonumber
\end{gather}
Furthermore, since $\n f \n_4 \leq \frac{a_5}{k}$ we can pick $a_6$ so that
$\n \rho_k -1 \n_4 \leq \frac{a_6}{k}$ also. Therefore we find that since $T(k)$ is maximal $T(k) > T_1$ so $T = T_1(k)$. 
Also as $k \rar \infty$, $\ep_2$ can be taken arbitrarily small. Hence $\zeta_k \rar \eta$ as a $C^1$ curve in $\mathscr{D}^3$ and
$\rho_k \rar 1$ in $H^4$, so $J(\zeta_k)\circ \zeta_k^{-1} \rar 1$ in $H^4$ also. $\zeta$ is a $C^2$ curve in $\mathscr{D}^3$ 
because $f \in H^4$ and by (\ref{eq_zeta}), $\ddot{\zeta} = -(c^2\nabla f) \circ \zeta \in H^3(\Om,\RR^n)$. This concludes the proof.

\begin{rema} It is most curious that $\rho(t)$ or $f(t)$ are in $H^4$ while $u(t)$ or $\zeta(t)$ are only in $H^3$. In the 
first place $\rho = J(\zeta)\circ \zeta^{-1}$ involves first derivatives of $\zeta$ so one would expect $\rho$ to be less rather 
than more differentiable. Secondly, $(u,\rho)$ satisfies (\ref{eq_comp_u})-(\ref{eq_comp_rho}) which is a quasi-linear symmetric 
hyperbolic system. The usual methods of solution of such systems would produce equally differentiable $u$ and $\rho$.
\end{rema}

\section{Differentiable Dependence on Initial Conditions \label{diff_dep} }

In this section we shall show that the solution of the compressible fluid problem depends differentiably on the initial 
conditions $u_{0k}$, $\rho_{0k}$. This solution, $u_k(t)$, $\rho_k(t)$ is given by Theorem \ref{first_big_theo}.

As we explained in section \ref{proofs}, the motion described by $u_k(t)$, $\rho_k(t)$ (which we sometimes call $u(t)$,$\rho(t)$) 
can be equivalently described by $\zeta (t):\Om \rar \Om$ where $\zeta(t)$ is a differentiable curve in $\mathscr{D}^s$, and
\begin{gather}
 \dot{\zeta}(t) = u(t)(\zeta(t)(x))
\label{eq_zeta_simp}
\end{gather}
We shall show that for fixed $t$, $\zeta(t)$ and $\dot{\zeta}(t)$ depend differentiably on $u_{0k}$, and $\rho_{0k}$.
First we note that $u_k(t)$, $\rho_k(t)$ are given by theorem \ref{first_big_theo} only 
when $u_{0k} \in H^s$, $\de u_{0k} \in H^s$, and $\rho_{0k} \in H^{s+1}$, and when $u_{0k}$ satisfies some inequalities and the 
compatibility conditions.

Thus (taking $s=3$), we let 
\begin{gather}
 \mathscr{I} = \big \{ (u,\rho) \in X^{3,4,4} ~ | ~ (u,\rho) 
\text{ satisfy the inequalities of Theorem } \ref{first_big_theo} \text{, and  } \widetilde{\Phi}(u,\rho) = 0 \big \}
\nonumber
\end{gather}
where $\widetilde{\Phi}: X^{3,4,4} \rar Y $ is defined in (\ref{tilde_Phi}). The set $\mathscr{I}$ is then the 
set of possible initial conditions for Theorem \ref{first_big_theo}.

Given $t$ we define $\psi_t(u,\rho) = (\zeta(t),\dot{\zeta}(t))$ where $\zeta(t)$ is the compressible fluid motion with 
initial data $(u,\rho)$. Then $\psi_t: U_t \rar \mathscr{D}^3 \times H^3(\Om, \RR^n)$ where $U_t$ is the set of initial 
conditions for which the fluid motion is defined for at least time $t$. From Theorem \ref{first_big_theo} it 
follows that $U_t$ is open in $\mathscr{I}$.

The goal of this section is to show the following:

\begin{theo}
$U_t$ is a submanifold of $X^{3,4,4}$ and $\psi_t: U_t \rar \mathscr{D}^3 \times H^3(\Om,\RR^n)$ is $C^1$. 
\label{submanifold_theo}
\end{theo}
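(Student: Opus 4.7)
The plan is to split the proof into two parts, handling the manifold structure first and the differentiability of $\psi_t$ second. For the manifold structure, I would apply the implicit function theorem to the compatibility map $\widetilde{\Phi}: X^{3,4,4} \to Y$ defined in (\ref{tilde_Phi}). The Neumann-series argument used in section \ref{proofs} to construct a bounded right inverse to $D_{(\tilde u,\tilde\rho)}\Phi$ via solving the system (\ref{sys_1})--(\ref{sys_3}) applies with only cosmetic changes to $D\widetilde{\Phi}$ on $X^{3,4,4}$, since the estimates on the operators $A_{ij}$ used no more than $H^3$ regularity of $(\tilde u, \tilde \rho)$. Thus $D\widetilde\Phi$ is a split surjection at every point of $\mathscr{I}$, and the standard Banach-space implicit function theorem gives local coordinates on $\mathscr{I}$ exhibiting it as a smooth submanifold of $X^{3,4,4}$; its open subset $U_t$ inherits this structure.

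For the $C^1$ dependence of $\psi_t$, my strategy is to reformulate the compressible motion in Lagrangian coordinates as an ODE on a Banach space, to which standard smooth-dependence theorems for Banach-space ODEs apply. The natural state variable is $(\zeta, \dot\zeta, f\circ\zeta, \dot f\circ\zeta)$ taken in an appropriate product of Sobolev spaces incorporating the extra derivative enjoyed by $f$. Equation (\ref{eq_zeta_f_h_c}), together with (\ref{eq_f_convected}) and the Neumann condition (\ref{normal_der_f}) transported to Lagrangian coordinates, gives a first-order system whose right-hand side involves the composition $F\circ \zeta$ for various $F$ built from $f$ and $u$, and the inversion of a convected wave operator with given boundary data. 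I would then verify that this right-hand side defines a $C^1$ map between the relevant Banach spaces, invoke the smooth-dependence version of Picard--Lindel\"of, and conclude that $\psi_t$ is $C^1$ on $U_t$.

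The main obstacle is showing that the Lagrangian vector field is genuinely $C^1$, not merely continuous, in the state. Two well-known pitfalls must be navigated. First, composition $(\xi, F)\mapsto F\circ \xi$ on Sobolev spaces is generally only continuous and loses a derivative upon differentiating in $\xi$; this is precisely why in Eulerian coordinates the flow depends only continuously on initial data, as noted for quasilinear systems in \cite{K2}. In Lagrangian coordinates the composition is differentiable in the $\xi$ argument provided the function being composed has one more spatial derivative than the ambient regularity. This is exactly where the extra smoothness $\rho\in H^{s+1}$ from Theorem \ref{first_big_theo} becomes essential: the term $c^2\nabla f$ appearing in (\ref{eq_zeta}) lies in $H^s$ because $f\in H^{s+1}$, so its pullback by $\zeta\in \mathscr{D}^s$ depends differentiably on $\zeta$. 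Second, one must handle the inhomogeneous Neumann data (\ref{normal_der_f}), whose right-hand side depends nonlinearly on $u$ and $\zeta$; here one uses the bounded right-inverse machinery already developed for $L$ in section \ref{estimates} together with the trace inequality (\ref{restrictions_inequality}). Given the length of the $C^1$ verification, I would defer the detailed implementation to section \ref{diff_dep_quasi} as promised in the discussion following Theorem \ref{second_big_theo}.
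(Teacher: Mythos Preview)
Your treatment of the submanifold part is essentially the paper's argument: the surjectivity of $D\widetilde{\Phi}$ with bounded right inverse, established in section \ref{proofs}, makes $\mathscr{I}=\widetilde{\Phi}^{-1}(0)$ a submanifold, and $U_t$ is open in it.

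For the $C^1$ dependence of $\psi_t$, however, your plan has a genuine gap. You propose to take $(\zeta,\dot\zeta,f\circ\zeta,\dot f\circ\zeta)$ as state and apply Picard--Lindel\"of. But the evolution of the last component is $\partial_t(\dot f\circ\zeta)=\ddot f\circ\zeta=(Lf+F)\circ\zeta$, and $L=-\delta c^2\nabla$ is a second-order operator: if $f\circ\zeta$ lives in $H^4$ then $(Lf)\circ\zeta$ is only in $H^2$, not in the $H^3$ slot occupied by $\dot f\circ\zeta$. The right-hand side is therefore not a bounded, let alone $C^1$, vector field on any fixed product of Sobolev spaces, and Picard--Lindel\"of does not apply. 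The phrase ``inversion of a convected wave operator'' does not rescue this: the wave operator is hyperbolic, not elliptic, and has no bounded inverse that would absorb the loss of derivatives at a fixed time.

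The paper avoids this by \emph{not} carrying $f$ as a state variable. It uses the integral equation (\ref{int_eq_zeta}) for $\dot\zeta$ alone, in which the only trace of $f$ is the term $(\nabla g(t))\circ\zeta(t)$ with $\nabla g=\nabla\Delta^{-1}\dot f$. Two separate facts are then needed: (i) since $\dot f\in H^3$ gives $\nabla g\in H^4$, the map $\zeta\mapsto(\nabla g)\circ\zeta$ is $C^1$ from $\mathscr{D}^3$ to $H^3$ (Lemma \ref{calculus_lemma}); and (ii) the map $\Xi_t:(u_0,\rho_0)\mapsto\nabla g(t)$ is $C^1$ from $U_t$ to $H^3$ (Proposition \ref{diff_map_technical}). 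Point (ii) is the substantial PDE step deferred to section \ref{diff_dep_quasi}, and it is proved not by an ODE argument but by linearizing the coupled system (\ref{system_u_f})--(\ref{bc_system_u_f}), solving the linearized hyperbolic problem for $(z,h)$ via energy estimates, and showing the difference quotient converges. Once (i) and (ii) are in hand, the map $T$ of (\ref{def_T}) is $C^1$ on $U_t\times C([0,t],H^3)$ and the implicit function theorem (applied to $T$, not Picard--Lindel\"of) gives $C^1$ dependence of $\dot\zeta$ on $(u_0,\rho_0)$. Your intuition about the role of the extra derivative on $f$ is exactly right for step (i), but step (ii) cannot be subsumed into an ODE framework.
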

\begin{proof}
Since $U_t$ is open in $\mathscr{I}$, to show that $U_t$ is a submanifold is suffices to check that $\mathscr{I}$ is a 
submanifold. This we proceed to do.

First let 
\begin{gather}
\mathscr{J} = \big \{ (u,\rho) \in X^{3,4,4} ~ | ~ (u,\rho) \text{ satisfy the inequalities of Theorem } \ref{first_big_theo} ~\big \}
\nonumber
\end{gather}
Clearly $\mathscr{J}$ is open in $X^{3,4,4}$. Then let $\widetilde{\Phi}: \mathscr{J} \rar Y$ be defined as in (\ref{tilde_Phi}). 
As we showed in section \ref{proofs}, $\widetilde{\Phi}$ is a smooth map and its derivative at any point $(u,\rho)$ is a 
surjective map from $X^{3,4,4}$ to $Y$. Hence $\mathscr{I} = \{ (u,\rho) \in \mathscr{J} ~|~ \widetilde{\Phi}(u,\rho) = 0 \}$ is a 
submanifold 
of $\mathscr{J}$ or $X^{3,4,4}$.

Now we must show that $\psi_t: U_t \rar \mathscr{D}^3 \times H^3(\Om, \RR^n)$ is differentiable, and to do so we will use 
equation  (\ref{int_eq_zeta}). Note that $\nabla g$ of that equation is equal to $\nabla \Delta^{-1} \dot{f}$ where $f$ is the 
solution of (\ref{eq_f_convected}). Of course $f$ and therefore $\nabla g$ depend on the initial data $(u,\rho)$ so we 
define for each $t$:
\begin{gather}
 \tl_t: U_t \rar H^3(\Om,\RR^n), \nonumber \\
 \tl_t(u,\rho) = \nabla g .
\nonumber
\end{gather}
We shall need the following proposition which will be proven in section \ref{diff_dep_quasi}.
\begin{prop}
 $\tl_t: U_t \rar H^3(\Om,\RR^n)$ is a $C^1$ map.
\label{diff_map_technical}
\end{prop}
\begin{rema} Since $\dot{f}$ is in $H^3$, $\nabla g$ is actually in $H^4$. However, the map $(u,\rho) \mapsto \nabla g$ is 
continuous in $H^4$, but probably not differentiable.
\end{rema}
Given Proposition \ref{diff_map_technical}, our proof that $\psi_t$ is differentiable will be a modification of the proof that the 
solution of an ordinary differential equation depends differentiably on initial conditions (see for example \cite{L}). In this 
case the equation will be (\ref{int_eq_zeta}).

We shall need the following.
\begin{lemma}
 Let $w \in H^4(\Om,\RR^n)$. Then the map $\zeta \mapsto w\circ\zeta$ is a $C^1$ map from $\mathscr{D}^3$ to $H^3(\Om,\RR^n)$.
\label{calculus_lemma}
\end{lemma}
\begin{proof}
This is just a calculus lemma, see \cite{E1}, \cite{BB}, or \cite{EM}
\end{proof}
To show that $\psi_t$ is differentiable, it suffices to show that $(u,\rho) \mapsto \dot{\zeta}$ is differentiable, because
\begin{gather}
 \zeta(t) = \operatorname{id} + \int_0^t \dot{\zeta}(s) ds.
\label{ftc_zeta}
\end{gather}
To show that $\dot{\zeta}$ depends differentiably on $(u,\rho)$ we rewrite (\ref{int_eq_zeta}), using 
(\ref{ftc_zeta}) to replace $\zeta$ with $\dot{\zeta}$.

First we rewrite the terms of (\ref{int_eq_zeta}), letting
\begin{gather}
 A(u,\rho) = Pu 
\nonumber \\
B(\zeta,\dot{\zeta}) = Z(\zeta,\dot{\zeta}) - R(\zeta,\dot{\zeta}) \nonumber \\
C(u,\rho,\zeta,t) = (\nabla g(t) )\circ \zeta(t) = \tl_t(u,\rho)\circ\zeta(t)
\nonumber
\end{gather}
Then (\ref{int_eq_zeta}) becomes
\begin{gather}
 \dot{\zeta}(t) = A(u,\rho) + \int_0^t B\left( \zeta(s),\dot{\zeta}(s)\right)ds + C(u,\rho,\zeta,t)
\nonumber
\end{gather}
or
\begin{gather}
 \dot{\zeta}(t) = A(u,\rho) + \int_0^t B\big(\operatorname{id} 
+ \int_0^s \dot{\zeta}(s^\prime)ds^\prime,\,\dot{\zeta}(s)\big)ds + C(u,\rho,\zeta,t) .
\label{long_int_dot_zeta}
\end{gather}
Now we use an argument from \cite{L}, to get the required differentiability. Let
\begin{gather}
X=C([0,t],H^3(\Om,\RR^n))
\nonumber
\end{gather}
 be the 
Banach space of continuous curves from $[0,t]$ to $H^3(\Om,\RR^n)$ with norm
\begin{gather}
 \n z \n = \sup_{0\leq s\leq t}\{ \n z(s) \n_3 \}.
\nonumber
\end{gather}
Let $T: U_t \times X \rar X$ be defined by
\begin{align}
\begin{split}
T(u,\rho,z)(s) & =  A(u,\rho) + \int_0^s B\big(\operatorname{id} 
+ \int_0^\ell z(t^\prime)dt^\prime,\,z(\ell)\big) d\ell 
\\
& + C \big(u,\rho,\operatorname{id} + \int_0^s z(\ell)d\ell,s \big) -z(s).
\end{split}
\label{def_T}
\end{align}
Checking carefully, we see that $z(t)$ is a solution of (\ref{long_int_dot_zeta}) 
if and only if $T(u,\rho,z)=0$. Thus 
given $(u,\rho)$ we find that
\begin{gather}
 T(u,\rho,\dot{\zeta}) = 0
\nonumber
\end{gather}
We now use the implicit function theorem to show that $\dot{\zeta}$ is a $C^1$ function of $(u,\rho)$. First note that the maps
$A$, $B$, and $C$ are all $C^1$: $A: X^{3,4,4} \rar H^3(\Om,\RR^n)$ is a continuous linear 
map; $B: \mathscr{D}^3\times H^3(\Om,\RR^n) \rar H^3(\Om,\RR^n)$ is $C^\infty$ since $Z$ and $R$ are $C^\infty$ as is 
shown in \cite{E2}; $C: U_t \times \mathscr{D}^3 \times [0,t] \rar H^3(\Om,\RR^n)$ is $C^1$ by 
Proposition \ref{diff_map_technical} and Lemma
\ref{calculus_lemma}. Therefore $T$ is also $C^1$.

Now we compute the partial derivative of $T$ with respect to its last (i.e, $z$) variable. Using
(\ref{def_T}) we find that the $z$-partial derivative of $T$ in direction $\xi$ is:
\begin{align}
\begin{split}
D_z T(u,\rho,z)(\xi)(s) =  &
\int_0^s DB\big(\operatorname{id} + \int_0^\ell z(t^\prime)dt^\prime,\,z(\ell)\big)\big(\int_0^\ell\xi(t^\prime)dt^\prime,\xi(\ell)\big) d\ell 
\\
& + DC\big(u,\rho,\operatorname{id} + \int_0^s z(\ell)d\ell,s\big)\big(0,0,0,\int_0^s \xi(\ell)d\ell\big) - \xi(s).
\end{split}
\label{partial_z_T}
\end{align}
Therefore we can write
\begin{gather}
 D_z T(u,\rho,z)\xi = \ep(u,\rho,z)\xi - \xi
\nonumber
\end{gather}
where $\ep(u,\rho,z): X \rar X$ is a linear map defined by (\ref{partial_z_T}).

Since $\ep(u,\rho,z): X \rar X$ involves $\int_0^t\xi$, we can pick $t>0$ small enough that the operator norm of $\ep(u,\rho,z)$ is 
less than $1$. In that case $D_zT(u,\rho,z): X \rar X$ is an invertible map.

Now fix a fluid motion $\zeta$ with initial data $(u,\rho)$, so $T(u,\rho, \dot{\zeta}) = 0$. Then if $D_z T(u,\rho, \dot{\zeta})$ is 
invertible, the implicit function theorem tells us that there is a neighborhood $V_t$ of $(u,\rho)$ in $U_t$ and a $C^1$ map
$\wedge: V_t \rar X$ such that for any $(\tilde{u},\tilde{\rho}) \in V_t$, $T(\tilde{u},\tilde{\rho},\wedge(\tilde{u},\tilde{\rho})) = 0$. Thus $\dot{\tilde{\zeta}} = \wedge(\tilde{u},\tilde{\rho})$ gives 
a fluid motion with initial data $(\tilde{u},\tilde{\rho})$. But if $t$ is small, $D_z T(u,\rho,\dot{\zeta})$ is 
invertible, so we get the $C^1$ map $\wedge$. Therefore since
$\psi_t(\tilde{u},\tilde{\rho})=\wedge(\tilde{u},\tilde{\rho})(t)$, $\psi_t: U_t \rar \mathscr{D}^3 \times H^3(\Om,\RR^n)$ is $C^1$ 
as well.

To show that $\psi_t$ is $C^1$ for large $t$, we note that for $0<t_1<t$
\begin{align}
\begin{split}
 \dot{\zeta}(t) =&\,  P( \dot{\zeta}(t_1)\circ\zeta^{-1}(t_1))\circ\zeta(t_1)  
  + 
\int_{t_1}^t B \big(\zeta(t_1) 
+ \int_{t_1}^s\dot{\zeta}(s^\prime)ds^\prime, \dot{\zeta}(s) \big)ds 
\\ &
+ 
C(u,\rho, \zeta(t_1) + \int_{t_1}^t \dot{\zeta}(s)ds,t)
\end{split}
\label{long_int_dot_zeta_t_1}
\end{align}
Using (\ref{long_int_dot_zeta_t_1}) instead of 
(\ref{long_int_dot_zeta}), and defining $T$ analogously, we find 
that $\dot{\zeta}(t)$ is a $C^1$ function of $(u,\rho, \zeta(t_1),\dot{\zeta}(t_1))$. Iterating 
this argument we get that $\dot{\zeta}(t)$ is a $C^1$ function of $(u,\rho, \dot{\zeta}(0)) = (u,\rho,u)$. 
It follows that $\psi_t$ is $C^1$ for all $t$, so Theorem \ref{submanifold_theo} is proven.
\end{proof}
\begin{rema}
 The map $(u_0,\rho_0) \mapsto u(t)$ defined from $U_t$ to $H^3(\Om,\RR^n)$ cannot be shown to be $C^1$ by this method. 
The use of Lagrange coordinates is essential. Indeed $(u_0,\rho_0) \mapsto u(t)$ is probably not $C^1$ or even H\"older continuous. 
See the counter example of \cite{K2} and also section \ref{diff_dep_quasi} of this paper.
\end{rema}

\section{Differentiable Dependence on Initial Conditions for Quasilinear Symmetric Hyperbolic Equations. Proof of Proposition \ref{diff_map_technical}\label{diff_dep_quasi}}

The specific goal of this section is the proof of Proposition \ref{diff_map_technical}, which says that the 
gradient part of a compressible fluid velocity depends differentiably on the initial conditions. However, since the proof is 
basically the same for a general quasi-linear symmetric hyperbolic system, we will discuss the general case as well.

Consider the initial value problem
\begin{gather}
 \begin{cases}
\partial_t u + \sum_{i=1}^n a_i(t,x,u) \partial_i u = f(t,x,u) \\
u(0,x) = u_0(x)  
 \end{cases}
\label{ivp}
\end{gather}
where $0\leq t \leq T$, $x \in \RR^n$, $f$ and $u$ take values in $\RR^m$ and $a_i(t,x,u)$ is a symmetric $m \times m$ matrix. 
To avoid extra technicalities, we will assume that $\{ a_i \}$ and $f$ are smooth.

It is well known (see \cite{K2}  for example) that for small $t$, this problem has a unique solution. We shall discuss this 
solution using the results and function spaces of \cite{K2}. However, in order to avoid imposing conditions at $|x|\rar \infty$, we 
shall assume that $a_i$, $f$, $u_0$ and $u$ are all periodic in $x$, or equivalently $x \in \TT^n = \RR^n/\ZZ^n$.

\begin{theo}
If $s > \frac{n}{2} + 1$, then for each $u_0 \in H^s(\TT^n)$ there exists a $T> 0$ and a unique 
solution $u(t,x)$ on $[0,T]\times \TT^n$
\label{existence_kato}
\end{theo}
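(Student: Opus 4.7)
The plan is to prove this by the classical Picard-type iteration for symmetric hyperbolic systems, combining energy estimates in $H^s$ with a contraction argument in a weaker norm. First I would set up the linearization: given a function $v(t,x)$ on $[0,T]\times\TT^n$ taking values in $\RR^m$, consider the \emph{linear} symmetric hyperbolic initial value problem
\begin{gather}
\partial_t u + \sum_{i=1}^n a_i(t,x,v)\partial_i u = f(t,x,v), \quad u(0,x) = u_0(x).
\nonumber
\end{gather}
Because the coefficient matrices $a_i(t,x,v)$ are symmetric, a standard $L^2$ energy estimate (integrating by parts and using that $\de(a_i(t,x,v))$ is controlled by $\n v \n_{C^1}$) shows existence and uniqueness of a solution $u \in C([0,T],L^2(\TT^n))$, and differentiating the equation and commuting $\nabla^\ell$ with the $a_i(t,x,v)\partial_i$ gives, via the product estimate (\ref{product_estimate}), an $H^s$ energy estimate of the schematic form
\begin{gather}
\frac{d}{dt}\n u(t)\n_s^2 \leq K\bigl(1 + \n v(t)\n_s\bigr)\n u(t)\n_s^2 + K\bigl(1+\n v(t)\n_s\bigr).
\nonumber
\end{gather}

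Next I would define the Picard iterates $u^{(0)}(t,x) \equiv u_0(x)$ and inductively $u^{(k+1)}$ as the solution of the linear problem above with $v = u^{(k)}$. Using the $H^s$ energy estimate just described together with a Gr\"onwall argument, one chooses $T>0$ depending only on $\n u_0\n_s$, the smoothness of $a_i,f$, and $s$, so that all the iterates satisfy a uniform bound $\n u^{(k)}(t)\n_s \leq M$ for $t\in[0,T]$. The continuous embedding $H^s \hookrightarrow C^1$ (which uses $s > \frac{n}{2}+1$) is essential here because the energy estimate requires control of $\n u^{(k)}\n_{C^1}$ to bound the commutators arising when $\nabla^\ell$ crosses the coefficients $a_i(t,x,u^{(k)})$.

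Then I would prove that $\{u^{(k)}\}$ is Cauchy in the \emph{lower} norm $C([0,T],L^2(\TT^n))$. Subtracting the equations for $u^{(k+1)}$ and $u^{(k)}$, the difference $d^{(k+1)} = u^{(k+1)}-u^{(k)}$ satisfies a linear symmetric hyperbolic equation whose inhomogeneous term is bounded in $L^2$ by $K\n d^{(k)}\n_0$, using the uniform $H^s$-bound on the iterates and the Lipschitz dependence of $a_i$ and $f$ on $u$. An $L^2$ energy estimate then yields $\n d^{(k+1)}(t)\n_0 \leq Kt \sup_{s\leq t}\n d^{(k)}(s)\n_0$, so after possibly shrinking $T$ we obtain a contraction and hence $u^{(k)} \to u$ in $C([0,T],L^2)$. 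Interpolation between the uniform $H^s$ bound and $L^2$ convergence gives $u^{(k)} \to u$ in $C([0,T],H^{s'})$ for every $s' < s$, which suffices to pass to the limit in the nonlinear equation, and the uniform bound together with weak-$*$ compactness plus the well-known argument (quoted in section \ref{proofs}) that a sequence bounded in $H^{s+1}$ converging in $H^s$ has limit in $H^{s+1}$ gives $u \in L^\infty([0,T],H^s)$; a standard regularization argument then upgrades this to $u \in C([0,T],H^s)$.

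Finally, uniqueness follows from an $L^2$ energy estimate applied directly to the difference of two purported solutions, exactly as in the contraction step above. The main technical obstacle is the $H^s$ energy estimate with the right constants --- specifically, bounding the commutator $[\nabla^\ell, a_i(t,x,v)\partial_i]u$ in $L^2$ by something controlled uniformly by $\n v\n_s$ and $\n u\n_s$ --- since this is what allows the iteration time $T$ to depend only on $\n u_0\n_s$; everything else is either routine or is quoted from \cite{K2}.
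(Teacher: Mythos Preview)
Your proposal is correct and follows exactly the classical iteration scheme for quasilinear symmetric hyperbolic systems that is carried out in \cite{K2}; the paper itself simply writes ``See \cite{K2}'' and gives no independent argument. In other words, you have supplied the details the paper chose to cite, and your outline matches Kato's proof in all essential points.
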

\begin{proof}
See \cite{K2}.
\end{proof}
Letting $u(t,x)=u(t)(x)$, we consider $u$ as a curve of functions on $\TT^n$. This curve is continuous in $H^s(\TT^n)$, 
that is, it is an element of 
\begin{gather}
 CH^s([0,T]) = C^0([0,T],H^s(\TT^n)).
 \nonumber
\end{gather}
Also there exists a neighborhood $U$ of $u_0$ in $H^s(\TT^n)$, such that for each $v_0$ in $U$, there 
exists a solution $v$ of (\ref{ivp}) with $v \in CH^s([0,T])$ and $v(0)=v_0$. Furthermore $v$ depends continuously on $v_0$.

Counter examples in \cite{K2} show that $u(t) \in H^s(\TT^n)$ do not depend differentiably (or even H\"older continuously)
on $u_0$ however. We shall investigate this phenomenon and show that the dependence of $u$ on $u_0$ is $C^1$ if we 
reduce $s$ by one in the range. That is, the map from $u_0 \in H^s$ to $u \in CH^{s-1}([0,T])$ is $C^1$. Then using this 
idea, we will prove Proposition \ref{diff_map_technical}.

First we consider a $C^1$-curve of initial data $u_0^\la$ and let $u^\la(t)$ be a solution of (\ref{ivp}) with 
initial data $u^\la_0$. When $\la=0$ we suppress the superscript so that $u_0^0 = u_0$ and $u^0(t)=u(t)$. Let us assume that all 
derivatives exist and let
\begin{gather}
 z(t) = \partial_\la(u^\la(t))_{|\la=0}.
\nonumber
\end{gather}
Then differentiating (\ref{ivp}) we find:
\begin{gather}
\partial_t z + \sum_{i=1}^n a_i(t,x,u) \partial_i z + \sum_{i=1}^n \partial_u(a_i(t,x,u))z\partial_i u  = \partial_u f(t,x,u)z 
\label{ivp_z}
\end{gather}
and we rewrite this as
\begin{gather}
 \partial_t z + A(u) \nabla z + B(u,\nabla u) z = F(u) z .
\label{ivp_z_abs}
\end{gather}
This equation is linear symmetric-hyperbolic and such equations are also analyzed in \cite{K2}. The solution of 
(\ref{ivp_z}) with $H^s$ initial data is shown to be a continuous curve in $H^{s-1}$. It is not shown to be in $H^s$ because the 
operator $B(u,\nabla u)$ is multiplication by $H^{s-1}$-functions (since it involves $x$-derivatives of $u$). Thus 
$B(u,\nabla u): H^{s-1} \rar H^{s-1}$ but $B(u,\nabla u)(H^s)\nsubseteq H^s$.

From this we might suspect that $\partial_\la (u^\la)$ exists as a curve in $CH^{s-1}$, but not generally in $CH^s$. We 
shall show that this is in fact what happens.

To begin with, we present a simple example which will help us both with the symmetric hyperbolic system and with the proof of 
proposition \ref{diff_map_technical} (compare to the counter example of \cite{K2}). Let $n=m=1$, and let $u_0^\la$ be a $C^1$ 
curve in $H^s(\TT^1,\RR^1)$ ($s > \frac{3}{2}$) parameterized by $\la$. Then for each $\la$, let $u^\la$ be the solution of 
\begin{gather}
 \partial_t u^\la + u^\la \partial_x u^\la =0, ~~~u^\la(0) = u^\la_0.
\label{eq_model}
\end{gather}
(this is a one dimensional compressible fluid motion with $p(\rho)=0$). Such a solution is easily found: Let $\zeta^\la(t)$ be a 
curve in $\mathscr{D}^s$ defined by $\zeta^\la(t) = x+tu_0^\la(x)$, so $\dot{\zeta}^\la(t) = u_0^\la$. Then let $u^\la(t) = \dot{\zeta}^\la(t)\circ(\zeta^\la(t))^{-1} = u_0^\la \circ(\zeta^\la(t))^{-1}$ (compare with
(\ref{zeta_dot_u}) ). Clearly:
\begin{gather}
 u^\la(t) \circ \zeta^\la(t) = u^\la_0
\label{u_la_zeta_la_model}
\end{gather}
and differentiating (\ref{u_la_zeta_la_model}) with respect to $t$, we find that $u^\la$ satisfies
(\ref{eq_model}). Since $\zeta^\la(t)$ is a continuous curve in $\mathscr{D}^s$ (at least for $t$ near zero), 
$(\zeta^\la(t))^{-1}$ is also continuous in $\mathscr{D}^s$. Hence $u^\la(t)$ is continuous in $H^s(\TT,\RR)$. Thus if $T$ is 
small enough so that $\zeta^\la(t) \in \mathscr{D}^s$ for all $t$ between zero and $T$, then $u^\la \in CH^s([0,T])$. 
Also it is clear 
from the construction that $u^\la$ depends continuously on $\la$ and that we can in fact find numbers $T$ and $\wedge$ such 
that the map $\la \mapsto u^\la$ is continuous from $(-\wedge,\wedge)$ to $CH^s([0,T])$.

Now we consider differentiability with 
respect to $\la$. Let $z_0 = \partial_\la (u^\la_0)_{|\la=0}$ 
and $z = \partial_\la(u^\la)_{|\la=0}$. Then $\partial_\la(\zeta^\la(t))_{|\la=0} = tz$, 
so differentiating (\ref{u_la_zeta_la_model}) with respect to $\la$ and letting $\la=0$, we get
\begin{gather}
z(t)\circ \zeta(t) + (\partial_x u(t))\circ \zeta(t)\, tz_0 = z_0
\label{eq_z_eta_model}
\end{gather}
where we omit the ``$\la$'' when $\la=0$. Also applying $\partial_x$ to (\ref{u_la_zeta_la_model}) we get:
\begin{gather}
 ((\partial_x u^\la(t) )\circ \zeta^\la(t))\partial_x (\zeta^\la(t)) = \partial_x u_0^\la .
\nonumber
\end{gather}
But 
\begin{gather}
 \partial_x(\zeta^\la(t)) = 1 + t \partial_x u_0^\la
\nonumber
\end{gather}
so
\begin{gather}
 \partial_x(u^\la(t))\circ \zeta^\la(t) = \frac{\partial_x u_0^\la}{1+t\partial_x u^\la_0} .
\label{eq_partial_x_u_la_comp_zeta_la_model}
\end{gather}
Combining (\ref{eq_z_eta_model}) and (\ref{eq_partial_x_u_la_comp_zeta_la_model}) we find:
\begin{gather}
 z(t)\circ \zeta(t) = z_0\Big ( 1-\frac{t\partial_x u_0}{1+t\partial_x u_0} \Big ) = z_0 \Big ( \frac{1}{1+t\partial_x u_0} \Big ).
\label{exp_z_zeta_la_model}
\end{gather}
From (\ref{exp_z_zeta_la_model}) we see that $z$ is an element of $CH^{s-1}$ which depends continuously on $u_0$, and from 
this it follows that $u \in CH^{s-1}$ is a $C^1$-function of $u_0$

Differential dependence on $u_0$ can also be shown by another method which will prove useful later. Differentiating 
(\ref{eq_model}) with respect to $\la$ we find that $z$, if it exists, must satisfy:
\begin{gather}
 \partial_t z + u \partial_x z + (\partial_x u)z = 0 .
\label{eq_model_anot_1}
\end{gather}
This equation is of the same type as (\ref{ivp_z_abs}), so it can be solved for $z \in CH^{s-1}$ by the methods of \cite{K2}. Also 
the difference quotient $z^\la = \frac{1}{\la}(u^\la - u)$ satisfies
\begin{gather}
 \partial_t z^\la + u \partial_x z^\la + (\partial_x u^\la) z^\la = 0,~~~z^\la(0) = z^\la_0 = \frac{1}{\la}(u_0^\la - u_0)
\label{eq_model_anot_2}
\end{gather}
so $y = z^\la - z$ satisfies
\begin{gather}
 \partial_t y + u \partial_x y+ (\partial_xu)y = \la z^\la \partial_x z^\la,~~~y(0)=y_0=z^\la_0 - z_0 .
\label{eq_model_anot_3}
\end{gather}
The estimates of \cite{K2} show that $y\rar 0$ in $CH^{s-1}$ as $\la \rar 0$, and from this it follows that $u \in CH^{s-1}$ 
depends differentiably on $u_0$.

We will use this method to show differentiability of the solution of (\ref{ivp}), and also in the proof of 
Proposition \ref{diff_map_technical}.

One can also solve (\ref{eq_model_anot_1})-(\ref{eq_model_anot_3}) more directly as follows: (\ref{eq_model_anot_1}) is equivalent to
\begin{gather}
 (z(t)\circ \zeta(t))\dot{} + (\partial_x u(t)\circ \zeta(t))(z(t)\circ \zeta(t) ) = 0
\nonumber
\end{gather}
so
\begin{gather}
z(t)\circ \zeta(t) = \exp(-\int_0^t (\partial_x u(s) )\circ \zeta (s) ds) z_0 .
\label{eq_model_direct_1}
\end{gather}
Similarly from (\ref{eq_model_anot_2}) we get
\begin{gather}
 z^\la\circ \zeta(t) = \exp(-\int_0^t (\partial_x u^\la(s) )\circ \zeta(s) ds )z_0^\la .
\label{eq_model_direct_2}
\end{gather}
Also, (\ref{eq_model_anot_3}) is equivalent to:
\begin{gather}
 ( y(t)\circ \zeta(t) )\dot{} + ( (\partial_x u(t) )\circ \zeta(t) ) (y(t)\circ\zeta(t)) = \la(z^\la \partial_x z^\la)\circ \zeta(t) .
\label{eq_model_direct_3}
\end{gather}
Therefore 
\begin{align}
\begin{split}
 y(t)\circ \zeta(t) = & \, \exp(-\int_0^t (\partial_x u)(s) \circ \zeta(s) ds )y_0 \\
 & + \int_0^t 
\exp(-\int_s^t (\partial_x u)(\tau)\circ \zeta(\tau) d\tau)(z^\la(s)\la \partial_x z^\la(s) )\circ \zeta (s) ds 
\end{split}
\label{eq_model_direct_big}
\end{align}
As $\la \rar 0$, $u^\la \rar u$ in $CH^s$. Hence $\partial_x u^\la - \partial_x u = \la \partial_x z^\la \rar 0$ in $CH^{s-1}$. 
Also from (\ref{eq_model_direct_2}) we find that $z^\la$ is bounded in $CH^{s-1}$ uniformly in $\la$. Therefore the last term of 
(\ref{eq_model_direct_big}) goes to zero in $CH^{s-1}$ as $\la \rar 0$. But $z_0^\la \rar z_0$ by 
definition, so $y_0 \rar 0$. Hence by (\ref{eq_model_direct_big}) $y(t)\circ \zeta(t) \rar 0$ in $CH^{s-1}$ as $\la \rar 0$. An 
argument of this type will be used in the proof of Proposition \ref{diff_map_technical} also.

We now proceed to prove differentiability for the symmetric hyperbolic case.

\begin{prop}
 Let $u_0 \in U \subset H^s(\TT^n)$ and $u \in CH^s([0,T])$ be as in Theorem \ref{existence_kato}. Also let $v$ be the 
solution of (\ref{ivp}) for $v_0 \in U$ as in that theorem. Define $\Phi: U \rar CH^{s-1}([0,T])$ by $\Phi(v_0) = v$. 
Then $\Phi$ is $C^1$. In fact, $D\Phi(u_0)z_0$ is the solution of (\ref{ivp_z_abs}) with initial data $z_0$.
\label{Phi_C_1_sym_hyper}
\end{prop}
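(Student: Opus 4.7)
The plan is to mimic the difference-quotient argument that the authors illustrated for the Burgers-type model equation, but now carried out via energy estimates rather than the explicit formula. Fix $u_0 \in U$ with solution $u \in CH^s([0,T])$ and a tangent direction $z_0 \in H^s(\TT^n)$. First I would produce the candidate derivative: the coefficients $A(u)$, $B(u,\nabla u)$, $F(u)$ appearing in (\ref{ivp_z_abs}) are continuous curves with values in $H^{s-1}$, so Kato's theory for linear symmetric-hyperbolic systems (\cite{K2}) yields a unique $z \in CH^{s-1}([0,T])$ solving (\ref{ivp_z_abs}) with $z(0)=z_0$. Define a tentative linear map $L(u_0):H^s \to CH^{s-1}([0,T])$ by $L(u_0)z_0 = z$. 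The boundedness of $L(u_0)$ on $H^{s-1}$ (hence on $H^s$) is part of Kato's estimates.

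Next, for $\lambda$ small enough that $u_0+\lambda z_0 \in U$, let $u^\lambda$ be the corresponding solution (in $CH^s$) and set $z^\lambda = \lambda^{-1}(u^\lambda - u)$. A direct subtraction of the two copies of (\ref{ivp}), together with Taylor expansion of $a_i$ and $f$ around $u$, shows that $z^\lambda$ satisfies a symmetric-hyperbolic system of the form
\begin{gather}
\partial_t z^\lambda + A(u)\nabla z^\lambda + B(u,\nabla u)z^\lambda = F(u)z^\lambda + \lambda \, G^\lambda,
\nonumber
\end{gather}
where $G^\lambda$ is a polynomial expression in $u$, $u^\lambda$, $\nabla u^\lambda$ and $z^\lambda$ that is uniformly bounded in $CH^{s-1}$ as $\lambda\to 0$ (using that $u^\lambda \to u$ in $CH^s$ together with the product estimate (\ref{product_estimate})). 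Subtracting the equation for $z$ from this, the difference $y^\lambda = z^\lambda - z$ satisfies the same linear system with initial data $y^\lambda(0) = \lambda^{-1}(u_0^\lambda - u_0) - z_0 \to 0$ in $H^s$ and forcing $\lambda G^\lambda$. Kato's $H^{s-1}$ energy estimate for linear symmetric-hyperbolic equations, applied on $[0,T]$, then gives
\begin{gather}
\n y^\lambda \n_{CH^{s-1}} \leq K\bigl(\n y^\lambda(0)\n_{s-1} + \lambda \n G^\lambda \n_{CH^{s-1}}\bigr),
\nonumber
\end{gather}
which tends to zero. This establishes Fréchet differentiability of $\Phi$ at $u_0$ with $D\Phi(u_0) = L(u_0)$.

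To upgrade to $C^1$, I would show $u_0 \mapsto L(u_0)$ is continuous from $U \subset H^s$ into the space of bounded linear maps $H^s \to CH^{s-1}([0,T])$. If $\tilde u_0 \to u_0$ in $H^s$, then by continuous dependence in Theorem \ref{existence_kato} the corresponding solutions satisfy $\tilde u \to u$ in $CH^s$, so the coefficients $A(\tilde u), B(\tilde u,\nabla\tilde u), F(\tilde u)$ converge to $A(u), B(u,\nabla u), F(u)$ in $CH^{s-1}$ (again by (\ref{product_estimate})). Writing the equation satisfied by $L(\tilde u_0)z_0 - L(u_0)z_0$ and applying the same linear energy estimate (now in $H^{s-1}$) shows this difference converges to zero in $CH^{s-1}$ uniformly for $\n z_0\n_s \leq 1$, giving operator-norm continuity.

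The main obstacle is the well-known loss of one derivative: the coefficient $B(u,\nabla u)$ maps $H^s$ to $H^{s-1}$ but not $H^s$ to $H^s$, so one cannot hope to get $z \in CH^s$ uniformly, and the argument only yields $C^1$ dependence into $CH^{s-1}$. This is why the target space must be dropped by one derivative, and it is exactly the same phenomenon the authors exploit in the proof of Proposition \ref{diff_map_technical}: for that application one takes the scalar model $\partial_t f = -\de u$ combined with the convected wave equation (\ref{eq_f_convected}), applies the same linearization and difference-quotient scheme to extract the gradient part $\nabla g = \nabla\Delta^{-1}\dot f$, and then observes that since $\dot f \in H^3$ one loses a derivative from $H^4$ to $H^3$, landing exactly in $H^3(\Om,\RR^n)$ as claimed.
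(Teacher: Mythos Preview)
Your approach is essentially the paper's: produce the candidate derivative $z$ by solving the linearized system, subtract to get an equation for $y^\lambda = z^\lambda - z$, and close with an $H^{s-1}$ energy estimate; then check continuity of $u_0\mapsto D\Phi(u_0)$. There is, however, one overstatement you should correct. You write the forcing in the $z^\lambda$-equation as $\lambda\,G^\lambda$ with $G^\lambda$ uniformly bounded in $CH^{s-1}$, justified by $u^\lambda\to u$ in $CH^s$ and the product estimate. That is not quite right: after Taylor expansion one of the error terms is (schematically) $\partial_u a_i(u)\,z^\lambda\,\partial_i(u^\lambda-u)$, whose $H^{s-1}$ norm is controlled by $\n z^\lambda\n_{s-1}\,\n u^\lambda-u\n_s$. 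The second factor is $o(1)$ but \emph{not} $O(\lambda)$ in general (Lipschitz dependence fails in $CH^s$), so the forcing is only $o(1)$, not $O(\lambda)$. Moreover, your stated justification does not by itself give the uniform bound on $\n z^\lambda\n_{s-1}$ that you are implicitly using; that requires a separate (easy) Gronwall argument showing Lipschitz dependence in $CH^{s-1}$. The paper sidesteps both issues by writing the error as $E$ with $\n E\n_{s-1}\le o(1)\bigl(1+\n z^\lambda-z\n_{s-1}\bigr)$ (their Lemma~\ref{variant_Om_lemma} plus the decomposition $I+II+III$) and then iterating the energy inequality rather than invoking a clean $O(\lambda)$ bound. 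Either route works, but your version needs these two small fixes to be complete.
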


To prove this proposition we will need the following Lemma which is a variant of the $\Om$-Lemma (see e.g. \cite{E1} or \cite{P}).
\begin{lemma}
 Let $h=h(t,x,u)$ be a smooth function from $[0,T]\times \RR^n\times \RR^m$ to some $\RR^k$ 
and for $s^\prime > \frac{n}{2}$ let $\Om_h: CH^{s^\prime} \rar CH^{s^\prime}$ be 
defined by $\Om_h(u)(t)(x)= h(t,x,u(t)(x))$. Then $\Om_h$ is a smooth map. Also the derivative of $\Om_h$ at $u$ in the 
direction $z$ obeys the formula:
\begin{gather}
 (D(\Om_h)(u)z)(t)(x) = \partial_uh(t,x,u(t)(x))(z(t)(x))
\nonumber
\end{gather}
or more succinctly
\begin{gather}
 D(\Om_h)(u)z = \Om_{\partial_u h}(u)z.
\label{D_Om_variant_Om_lemma}
\end{gather}
\label{variant_Om_lemma}
\end{lemma}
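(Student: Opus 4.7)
The plan is to reduce the lemma to the classical $\Omega$-lemma for Sobolev spaces on $\TT^n$ (see \cite{E1}, \cite{P}, \cite{BB}) and then handle the extra continuous time parameter by uniform-in-$t$ estimates. The classical $\Omega$-lemma asserts that for $s' > n/2$ and any smooth $g:\RR^n\times\RR^m \rar \RR^k$, the map $w \mapsto g(\,\cdot\,,w(\cdot))$ is smooth from $H^{s'}(\TT^n,\RR^m)$ to $H^{s'}(\TT^n,\RR^k)$, with derivative at $w$ in direction $\zeta$ given by $x \mapsto \partial_u g(x,w(x))\zeta(x)$. This rests on the Sobolev product estimate (\ref{product_estimate}) and the fact that $H^{s'}$ is a Banach algebra.

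First I would apply this fixed-$t$ result: for each $t \in [0,T]$, the mapping $w \mapsto h(t,\cdot,w(\cdot))$ is smooth from $H^{s'}(\TT^n)$ to $H^{s'}(\TT^n)$. The estimates produced by the $\Omega$-lemma depend only on the $C^k$-norms of $h$ on a compact set in $[0,T]\times \TT^n \times \RR^m$, and these are uniformly bounded on compact sets since $h$ is smooth and $[0,T]$ is compact. This uniform control immediately implies that if $u \in CH^{s'}([0,T])$, then $t \mapsto h(t,\cdot,u(t)(\cdot))$ is continuous as a curve in $H^{s'}$, so $\Om_h$ indeed maps $CH^{s'}([0,T])$ to itself and is continuous.

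Next I would establish the derivative formula. For fixed $u \in CH^{s'}([0,T])$ and $z \in CH^{s'}([0,T])$, define $L(z)(t)(x) = \partial_u h(t,x,u(t)(x))z(t)(x)$. By the same $\Omega$-lemma reasoning applied with $g = \partial_u h$ and the product estimate, $L$ is a bounded linear operator on $CH^{s'}([0,T])$. Then I would estimate the remainder
\begin{gather}
\Om_h(u+z) - \Om_h(u) - L(z)
\nonumber
\end{gather}
pointwise in $t$: a second-order Taylor expansion of $h$ in its third argument expresses this remainder as an integral of $\partial_u^2 h$ against $z\otimes z$ over $[0,1]$, and the product estimate (\ref{product_estimate}) yields an $H^{s'}$-bound of order $\n z(t) \n_{s'}^2$ uniform in $t$. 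Taking the supremum over $t \in [0,T]$ gives the Fréchet differentiability and the stated formula (\ref{D_Om_variant_Om_lemma}). Higher derivatives, hence smoothness, follow by iterating this argument with $h$ replaced by $\partial_u^j h$.

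The main obstacle is essentially bookkeeping: one must verify that all the estimates from the classical $\Omega$-lemma are uniform in $t \in [0,T]$ (so that the pointwise-in-$t$ Fréchet differentiability lifts to differentiability in the sup-norm on $CH^{s'}([0,T])$), and that the resulting derivative operator $L$ acts continuously on $CH^{s'}$. Both are routine given compactness of $[0,T]$ and the product estimate (\ref{product_estimate}), but they are the only substantive checks required beyond citing the classical result.
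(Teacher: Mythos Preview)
Your proposal is correct and takes essentially the same approach as the paper: both refer the bulk of the argument to the classical $\Omega$-lemma (cited as \cite{E1}, \cite{P}) and then verify the derivative formula by a Taylor/integral-remainder computation, with the time parameter handled by uniform-in-$t$ estimates. The only minor technical difference is that the paper writes the first-order integral remainder $\int_0^1(\Omega_{\partial_u h}(u^{\tau\lambda})-\Omega_{\partial_u h}(u))\,d\tau\cdot z$ and appeals to continuity of $\Omega_{\partial_u h}$, whereas you use a second-order Taylor bound; both arguments are standard and equivalent in effect.
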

\begin{proof}
 The proof is the same as the proof of the $\Om$-Lemma in \cite{E1} or \cite{P}, so we shall omit most of it. We shall 
however compute the first derivative of $\Om_h$.

Let $u^\la = u +\la z \in CH^{s^\prime}$. Then for each $t$ and $x$
\begin{gather}
 \Om_h(u^\la)(t)(x) - \Om_h(u)(t)(x) = \int_0^1 \partial_u h(t,x,u^{\tau \la}(t,x))\la z(t,x) d\tau .
\nonumber
\end{gather}
Therefore 
\begin{gather}
 \Om_h(u^\la)(t)(x) - \Om_h(u)(t)(x) = \int_0^1 \Om_{\partial_u h}(u^{\tau\la})\la z d\tau 
= \la \Big( \int_0^1 \Om_{\partial_u h}(u^{\tau \la})d\tau \Big) z .
\nonumber
\end{gather}
Hence 
\begin{gather}
 \frac{1}{\la} \big(\Om_h(u^\la) - \Om_h(u)\big) - \Om_{\partial_u h}(u)z = \Big(\int_0^1 (\Om_{\partial_u h}(u^{\tau \la}) 
- \Om_{\partial_u h}(u) )d\tau \Big)z .
\label{computing_der_Om_h}
\end{gather}
The right hand side of (\ref{computing_der_Om_h}) goes to zero in $CH^{s^\prime}$ as $\la \rar 0$, and 
(\ref{D_Om_variant_Om_lemma}) follows.
\end{proof}
\noindent \emph{Proof of Proposition \ref{Phi_C_1_sym_hyper}:} Let $u$ be the solution of (\ref{ivp}) with initial 
data $u_0$ and let $u^\la$ be the solution with initial data $u_0^\la = u_0 + \la z_0$. Also let $z$ be the 
solution of (\ref{ivp_z_abs}) with initial data $z_0$ and let $z^\la$ equal $\frac{1}{\la}(u^\la - u)$.

We shall show that $\lim_{\la\rar 0}z^\la = z$ where the limit is in $CH^{s-1}$. Since $u$ and $u^\la$ 
satisfy (\ref{ivp}), $z^\la$ satisfies:
\begin{gather}
 \partial_t z^{\la} + \sum_{i=1}^n a_i(t,x,u)\partial_i z^\la + 
 \sum_{i=1}^n\frac{1}{\la}(a_i(u^\la) - a_i(u))\partial_iu^\la =
\frac{1}{\la}(f(u^\la) - f(u))
\label{eq_z_la_proof}
\end{gather}
We want to show that the solution of (\ref{eq_z_la_proof})  is near $z$ and to do so we rewrite (\ref{eq_z_la_proof}) as:
\begin{gather}
\partial_t z^\la + A(u) \nabla z^\la + B(u,\nabla u) z^\la = F(u)z^\la + E
\label{eq_z_la_proof_abs}
\end{gather}
where $A$, $B$, and $F$ are as in (\ref{ivp_z_abs}) and 
\begin{gather}
E = \sum_{i=1}^n \Big (\partial_u a_i(u) z^\la \partial_i u - \frac{1}{\la}( a_i(u^\la) - a_i(u) )\partial_i u^\la \Big )
+\frac{1}{\la}(f(u^\la) - f(u) ) -\partial_u f(u) z^\la
\label{definition_E_proof}
\end{gather}
Subtracting (\ref{ivp_z_abs}) from (\ref{eq_z_la_proof_abs}) we find
\begin{gather}
\partial_t(z^\la - z)+  A(u) \nabla(z^\la - z) + B(u,\nabla u)(z^\la - z) = F(u)(z^\la - z) + E
\label{eq_z_difference_proof}
\end{gather}
and $z^\la(0) - z(0) = 0$. From standard energy estimates (or from Theorem I of \cite{K2}) we find that 
the solution of (\ref{eq_z_difference_proof}) obeys:
\begin{gather}
 \n z^\la(t) - z(t) \n_{s-1} \leq e^{Kt} \int_0^t \n E(\tau) \n_{s-1} d\tau
\label{energy_esimate_difference_z_la_z_proof}
\end{gather}
where $K$ depends only on $\max_{0\leq t \leq T} \{ \n u(t) \n_s \}$, the norm of $u$ in $CH^s$.

Thus to show that $z^\la(t) \rar z(t)$ in $H^{s-1}$ it suffices to estimate $\n E(t) \n_{s-1}$. To do this it will be 
convenient to use the notation $O(\la)$. We say that a function $h(\la,t)$ is $O(\la)$ if $\lim_{\la \rar 0} h(\la,t) = 0$ 
uniformly in $t \in [0,T]$.

From (\ref{definition_E_proof}) we find:
\begin{gather}
 \n E \n_{s-1} \leq \sum_{i=1}^n \n \frac{1}{\la}( a_i(u^\la) - a_i(u) ) - \partial_u a^i(u) z^\la \n_{s-1} \n\partial_i u \n_{s-1}
 \nonumber \\
+ \sum_{i=1}^n \frac{1}{\la}\n a_i(u^\la) - a_i(u) \n_{s-1} \n \partial_i u^\la - \partial_i u \n_{s-1} 
+ \n \frac{1}{\la}( f(u^\la) - f(u)) - \partial_u f(u) z^\la \n_{s-1} \nonumber \\
= I + II + III
\nonumber
\end{gather}
We note that $I$ is $O(\la)$ by Lemma \ref{variant_Om_lemma}.

We proceed to estimate $II$. $\n \partial_i u^\la - \partial_i u \n_{s-1}$ is $O(\la)$, and 
$\frac{1}{\la}\n a_i(u^\la) - a_i(u) \n_{s-1}$ is $O(\la)$ $+ \n \partial_u a^i(u)z^\la \n_{s-1}$. Therefore
\begin{gather}
 II \leq O(\la)(1+\n z^\la \n_{s-1} )
\nonumber
\end{gather}
But then since $\n z \n_{s-1}$ is independent of $\la$ we get:
\begin{gather}
 II \leq O(\la)(1+ \n z^\la - z \n_{s-1})
\nonumber
\end{gather}
From Lemma \ref{variant_Om_lemma}, we find that $III$ is $O(\la)$ also and thus we get
\begin{gather}
 \n E \n_{s-1} \leq O(\la) ( 1 + \n z^\la - z \n_{s-1} )
\label{estimate_E_s_1_O_la}
\end{gather}
Hence from (\ref{energy_esimate_difference_z_la_z_proof}) we get:
\begin{gather}
 \n z^\la(t) - z(t) \n_{s-1} \leq O(\la)e^{Kt}\int_0^t \n z^\la(r) - z(r) \n_{s-1} dr + O(\la)
\nonumber
\end{gather}
from this it follows, iterating the inequality, that $\n z^\la - z \n_{s-1}$ is $O(\la)$. Thus the 
derivative of $\Phi$ at $u_0$ in direction $z_0$ is $z$, the solution of (\ref{ivp_z_abs}). But this $z$ is clearly a 
continuous linear function of $z_0$ and from \cite{K2} we know that $z\in CH^{s-1}$ is a continuous function of $u \in CH^s$. 
Therefore $\Phi$ is $C^1$ and the proposition if proved. \hfill $\qed$\\ \medskip

Proposition \ref{diff_map_technical} does not follow as a special case of Proposition
\ref{Phi_C_1_sym_hyper} because $\Om$ is a domain with boundary and we must consider initial-boundary value problems. 
But we will see that much of the argument is the same.

Proposition \ref{diff_map_technical} says that $\tl_t$ is a $C^1$ map,
 where $\tl_t(u_0,\rho_0) = \nabla g(t)$, but 
since $\nabla g(t) = \nabla \Delta^{-1} \dot{f}$, is suffices to show that $(u_0,\rho_0)\mapsto \dot{f}$ 
is a $C^1$ map from $U_t$ to $H^2(\Om,\RR)$. We shall prove a slightly different proposition that clearly implies this.

Given $(u_0,\rho_0) \in U_t$ pick an interval $[0,T]$ such that $t\in [0,T]$ and the fluid motion $u(t)$,$\rho(t)$ with 
initial data $(u_0,\rho_0)$ is defined on $[0,T]$. As before we let $f_0 = \log \rho_0$ and $f(t) = \log \rho(t)$. Let
\begin{gather}
 Z = \big \{ x \in C^0([0,T],H^2(\Om,\RR^n))~|~\dot{x} \in \bigcap_{k=0}^2 C^k([0,T],H^{2-k}(\Om,\RR^n) ) \big \}
\nonumber \\
X_0^k =  \bigcap_{j=0}^k C^j([0,T],H^{k-j}(\Om,\RR) ) 
\nonumber \\
X_{\frac{1}{2},\partial}^{k+\frac{1}{2}} = \bigcap_{j=0}^k C^j([0,T],H^{k+\frac{1}{2}-j}(\partial \Om,\RR) )
\nonumber
\end{gather}
and let $\n\cdot \n_Z$, $\n \cdot \n_{X_0^k}$ and $\n \cdot \n_{X_{\frac{1}{2},\partial}^{k+\frac{1}{2}}}$ be the appropriate norms for 
these spaces (see below).

Let $\widetilde{\tl}(\tilde{u}_0,\widetilde{f}_0)=(\tilde{u},\tilde{f})$, where $\tilde{u}$, $\tilde{f}$ defines the 
fluid motion with initial data $\tilde{u}_0$, $\tilde{f}_0$. Then $\widetilde{\tl}$ is defined from a 
neighborhood $V$ of $(u_0,f_0)$ in $U_t$ to $Z \times X_0^3$.

\begin{prop} $\widetilde{\tl}: V \rar Z\times X_0^3$ is a $C^1$ map.
\label{tilde_tl_C_1}
\end{prop}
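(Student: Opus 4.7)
The plan is to mimic the argument given for Proposition \ref{Phi_C_1_sym_hyper}, adapted to the initial-boundary value problem for the coupled system (\ref{eq_comp_u})--(\ref{eq_comp_rho}). Fix $(\tilde{u}_0,\tilde{f}_0) \in V$ and let $(z_0,y_0)$ be a tangent vector to the submanifold $\mathscr{I}$ at this point. For small $\la$ form perturbed initial data $(\tilde{u}_0+\la z_0, \tilde{f}_0 + \la y_0)$, note that by Theorem \ref{first_big_theo} they still lie in $V$, and let $(\tilde{u}^\la,\tilde{f}^\la)$ be the corresponding solutions. Form the difference quotients $w^\la = \la^{-1}(\tilde{u}^\la-\tilde{u})$ and $q^\la = \la^{-1}(\tilde{f}^\la - \tilde{f})$. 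The candidate derivative is the solution $(w,q)$ of the system obtained by formally differentiating the equations and boundary conditions in $\la$.

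First I would write down the linearized problem. Differentiating (\ref{eq_comp_u}), (\ref{f_dot_div_u}) yields a linear symmetric-hyperbolic system for $(w,q)$ whose coefficients are polynomials in $\tilde{u}$, $\tilde{f}$, $c^2(\tilde{f})$ and their first derivatives. Differentiating (\ref{normal_der_f}) gives an inhomogeneous Neumann-type boundary condition for $q$ of exactly the same schematic form as the one analyzed in section \ref{estimates}. Because $(z_0,y_0)$ is tangent to $\mathscr{I}$, the linearized compatibility conditions are automatically satisfied; this is the whole point of having constructed the manifold $\mathscr{I}$ in section \ref{proofs}. Existence of $(w,q)$ in $Z\times X_0^3$ then follows from the same convected-wave-equation energy estimates used in section \ref{estimates}, which are in fact linear once the coefficients $c^2$ and $u$ are frozen to $c^2(\tilde f)$ and $\tilde u$; the key structural feature, namely the one extra derivative for the density component, is preserved by the linearization.

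Next I would subtract the equation satisfied by $(w,q)$ from the equation satisfied by $(w^\la,q^\la)$, obtaining a linear inhomogeneous system
\begin{gather*}
\partial_t(w^\la - w) + A(\tilde u)\nabla(w^\la - w) + B \cdot (w^\la - w,q^\la - q) = E^\la,
\end{gather*}
with zero initial data, where $E^\la$ collects all Taylor-remainder terms of the form $\la^{-1}\big(h(\tilde u^\la,\tilde f^\la) - h(\tilde u,\tilde f)\big) - D_{(\tilde u,\tilde f)}h\cdot(w^\la,q^\la)$, and similarly for the $q$-equation together with its boundary trace. A variant of Lemma \ref{variant_Om_lemma} adapted to functions on $\Om$ (proved exactly as in \cite{E1} or \cite{P}, using the product estimate (\ref{product_estimate}) and the restriction inequality (\ref{restrictions_inequality}) to handle the boundary term) shows that
\begin{gather*}
\n E^\la \n_{Z\times X_0^3} \le O(\la)\bigl(1 + \n (w^\la - w, q^\la - q)\n_{Z\times X_0^3}\bigr).
\end{gather*}
Feeding this into the linear energy estimate from section \ref{estimates} and iterating with Gronwall gives $(w^\la,q^\la)\to(w,q)$ in $Z\times X_0^3$, so $\widetilde\tl$ is Gateaux-differentiable with the expected derivative.

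The main obstacle will be the boundary terms. Verifying that $E^\la$ satisfies the right boundary estimate requires combining the compatibility conditions for $(\tilde u^\la,\tilde f^\la)$, $(\tilde u,\tilde f)$ and the linearization in a Taylor-remainder fashion; this is where tangency to $\mathscr I$ is essential. Once the Gateaux derivative is produced, continuity of the map $(\tilde u_0,\tilde f_0)\mapsto D\widetilde\tl(\tilde u_0,\tilde f_0)$ as an operator-norm valued map follows from the fact that the coefficients of the linearized system depend continuously on $(\tilde u,\tilde f)$ in the background norms, which in turn depend continuously on $(\tilde u_0,\tilde f_0)$ by Theorem \ref{first_big_theo}; this upgrades Gateaux-$C^0$ to Fr\'echet-$C^1$ as in the final step of Proposition \ref{Phi_C_1_sym_hyper}.
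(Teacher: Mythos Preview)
Your overall strategy matches the paper's: linearize the system, form difference quotients, write the equation for $(z^\la-z,h^\la-h)$ with Taylor-remainder forcing, and show the forcing is $O(\la)$. The paper does exactly this, and your remark about tangency to $\mathscr{I}$ supplying the linearized compatibility conditions is correct.

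Where your sketch has a real gap is the sentence ``Existence of $(w,q)$ in $Z\times X_0^3$ then follows from the same convected-wave-equation energy estimates used in section \ref{estimates}.'' Section \ref{estimates} contains only \emph{a priori} estimates for $f$ assuming $u$ is already controlled; it does not give an existence theory for the coupled linearized system in the asymmetric spaces $Z\times X_0^3$, and treating $(w,q)$ as a single symmetric-hyperbolic system (as you suggest in the first paragraph) would put both components in the same Sobolev scale and destroy the extra derivative for $q$ that you correctly identify as essential. The paper handles this by decoupling: given $r\in X_0^3$, the equation $\dot{x}+\nabla_x u = B_1r+A_1r$ is solved not by energy methods but by the Lagrangian trick---composing with the flow $\zeta$ turns it into a linear ODE $\partial_t y = -B_3(t)y$ on $H^2(\Om,\RR^n)$, yielding an evolution operator $U(t,s)$ and then Duhamel's formula. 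Given $x\in Z$, the second-order Neumann-type hyperbolic problem for $r$ is solved using Miyatake's results \cite{M}, and a further material derivative is taken to reach $X_0^3$. These two steps are then iterated, and for short time the iteration contracts. None of this is contained in section \ref{estimates}, and the Lagrangian-coordinate step in particular is the mechanism that lets $x$ live in $H^2$ while its material derivative is better; you should make this explicit rather than invoking energy estimates that do not by themselves construct the solution.

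Once that construction is in place, your endgame (estimating $E^\la$ via the $\Om$-lemma, Gronwall, continuity of coefficients giving $C^1$) is the same as the paper's.
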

\begin{proof}
First we note that $(u,f)$ satisfies the system
\begin{subnumcases}{}
 \dot{u} = -c^2(f) \nabla f 
\label{system_u_f} \\
\ddot{f}= -\de c^2(f) \nabla f + u^i_j u^j_i 
\label{system_u_f_dot_u} 
\end{subnumcases}
with boundary condition
\begin{gather}
 c^2 \nabla_\nu f = - S_2(u,u) \text{ on } \partial \Om,
\label{bc_system_u_f}
\end{gather}
and initial condition
\begin{gather}
 f(0) = f_0,~~\dot{f}(0) = \de u_0,~~u(0) = u_0.
\label{ic_system_u_f}
\end{gather}
Proceeding as in the proof of Proposition \ref{Phi_C_1_sym_hyper}, we let $(u^\la,f_0^\la)$ be a $C^1$ curve of initial 
conditions in $V$ and let $(u^\la,f^\la)$ be the corresponding solutions of (\ref{system_u_f})-(\ref{system_u_f_dot_u}), (\ref{bc_system_u_f}), (\ref{ic_system_u_f}). If 
all derivatives exist, $(z_0,h_0) = \partial_\la(u_0^\la,f_0^\la)_{|\la=0}$ 
and $(z,h) = \partial_\la(u^\la,f^\la)_{|\la=0}$, then $(z,h)$ must satisfy
\begin{subnumcases}{}
\dot{z} + \nabla_z u = -(c^2)^\prime (f) h \nabla f - c^2 \nabla h \label{sys_z_h_1} \\
\ddot{h} + \nabla_z \dot{f} + (\nabla_z f )\dot{} = -\de c^2 \nabla h - 
\de (c^2)^\prime(f) h \nabla f + 2 z_j^i u_i^j \label{sys_z_h_2} 
\end{subnumcases}
with initial-boundary conditions
\begin{gather}
 (c^2)^\prime h \nabla_\nu f + c^2 \nabla_\nu h = -2S_2(u,z) \text{ on } \partial \Om, \label{bc_sys_z_h}  \\
z(0)=z_0,~ h(0)=h_0,~ \text{ and } \dot h(0)+\nabla_{z(0)} f_0 = \de z_0. \label{ic_sys_z_h} 
\end{gather}
Here the extra terms on the left of (\ref{sys_z_h_1})-(\ref{ic_sys_z_h}) come from the fact that `` $\dot{}$ '' depends on $\la$.
For example
\begin{gather}
 \partial_\la(\dot{u}^\la) = \partial_\la(\partial_t u^\la + \nabla_{u^\la} u^\la ) 
= \partial_t \partial_\la u^\la + \nabla_{u^\la} \partial_\la u^\la 
+ \nabla_{\partial_\la u} u^\la,
\nonumber
\end{gather}
so 
\begin{gather}
 \partial_\la(\dot{u}^\la)_{|\la=0} = \dot{z} + \nabla_z u.
\nonumber
\end{gather}
Also $c$ is a function of $f$ (see (\ref{definition_of_c})) and $(c^2)^\prime(f)$ means $\partial_f (c^2(f))$. Below we sometimes
write $c^2$ for $c^2(f)$.

The remainder of our proof will be like the proof of proposition \ref{Phi_C_1_sym_hyper} except that (\ref{ivp_z_abs}) will be 
replaced by (\ref{sys_z_h_1})-(\ref{ic_sys_z_h}).  We let $(z^\la, h^\la) = \frac{1}{\la} ( u^\la - u, f^\la - f)$. Then
$(z^\la, h^\la)$ satisfies
\begin{subnumcases}{}
 \dot{z}^\la + \nabla_{z^\la} u^\la = -c^2 \nabla h^\la - \frac{1}{\la}( c^2(f^\la) - c^2(f) )\nabla f^\la \label{sys_z_h_la_1} \\
\ddot{h}^\la + \nabla_{z^\la} \dot{f}^\la + (\nabla_{z^\la} f^\la )\dot{} + \la \nabla_{z^\la}\nabla_{z^\la} f = -\de c^2 \nabla h^\la \nonumber \\
\; \; \; \; \; \; \; \; \; \; \; \; -\frac{1}{\la} \de (c^2(f^\la) - c^2(f) ) \nabla f^\la + z^{\la i}_j(u^{\la j}_i+u^j_i) \label{sys_z_h_la_2}
\end{subnumcases}
with boundary condition
\begin{gather}
 \frac{1}{\la}( c^2 (f^\la) - c^2(f) ) \nabla_\nu f^\la + c^2 \nabla_\nu h^\la = -S_2(u^\la + u, z^\la) \text{ on } \partial \Om.
\label{bc_sys_z_h_la}
\end{gather}
Assuming that $(z,h)$ satisfies (\ref{sys_z_h_1})-(\ref{ic_sys_z_h}) we find that $(z^\la - z,h^\la - h)$ satisfies the following:
\begin{gather}
 (z^\la - z )\dot{} + \nabla_{z^\la -z } u = -(c^2)^\prime(h^\la - h)\nabla f -c^2 \nabla( h^\la - h) + E_1,
\label{sys_z_h_la_difference_1}
\end{gather}
where 
\begin{gather}
 E_1 = -\nabla_{u^\la - u} z^\la - (\frac{1}{\la}(c^2(f^\la)-c^2(f)) - (c^2)^\prime h^\la)\nabla f -(c^2(f^\la) - c^2(f) )\nabla h^\la,
\label{def_E_1}
\end{gather}
and
\begin{align}
\begin{split}
 (h^\la - h)\,\ddot{} + \nabla_{z^\la - z} \dot{f} + (\nabla_{z^\la -z} f)\dot{} = & \,
 -\de c^2 \nabla(h^\la - h ) 
 \\ &
- \de (c^2)^\prime (h^\la - h)\nabla f + 2u_j^i(z^\la - z)^j_i + E_2,
\end{split}
\label{sys_z_h_la_difference_2}
\end{align}
where
\begin{align}
\begin{split}
 E_2 = & \,-\la (\nabla_{z^\la}\dot{h}^\la + (\nabla_{z^\la} h^\la)\dot{} + \nabla_{z^\la}\nabla_{z^\la} f)
  - \de(\frac{1}{\la}(c^2(f^\la) -c^2(f)) - (c^2)^\prime h^\la)\nabla f 
  \\
  & - \de( c^2(f^\la) 
- c^2(f))\nabla h^\la + (u^\la-u)^i_j z^{\la j}_i,
\end{split}
\label{def_E_2}
\end{align}
with boundary condition
\begin{gather}
 c^2 \nabla_\nu (h^\la - h) + (c^2)^\prime (h^\la - h) \nabla_\nu f = 2S_2(u, z^\la - z) + E_\partial,
\label{bc_sys_z_h_la_difference}
\end{gather}
where 
\begin{gather}
 E_\partial = -S_2(u^\la - u, z^\la) - (\frac{1}{\la}(c^2(f^\la) - c^2(f) ) - (c^2)^\prime h^\la)\nabla_\nu f -
(c^2(f^\la) - c^2(f))\nabla_\nu h^\la.
\label{def_E_partial}
\end{gather}
Notice that (\ref{sys_z_h_la_difference_1})-(\ref{def_E_partial}) is the same system as
(\ref{sys_z_h_1})-(\ref{ic_sys_z_h}) except for the inhomogeneous terms $E_1$, $E_2$, $E_\partial$

We shall show that both these systems have solutions in $Z \times X_0^3$. Also since $(u_0^\la,f_0^\la)$ is a $C^1$ curve 
in $V$, $(z_0^\la - z_0, h^\la_0 - h_0)$ is $O(\la)$ in $X^{3,4,4}$. We shall also get bounds for $E_1$, $E_2$ 
and $E_\partial$ in terms of $O(\la)$. From this it will follow that $(z^\la - z, h^\la - h)$ is $O(\la)$ 
in $Z\times X^3_0$, so $\widetilde{\tl}$ is differentiable at $(u_0,f_0)$ and $D\widetilde{\tl}(u_0,f_0)(z_0,h_0) = (z,h)$. One 
routinely sees that $(z,h)$ depends continuously on $(u,f)$, so $\widetilde{\tl}$ is in fact $C^1$, which is what we want to prove.

To solve (\ref{sys_z_h_la_difference_1})-(\ref{bc_sys_z_h_la_difference}), we shall rewrite it as:
\begin{subnumcases}{}
 \dot{x} + \nabla_x u = B_1 r + A_1r +E_1, \label{sys_z_h_la_dif_abs_1}  \\
\ddot{r} + \nabla_x \dot{f} + (\nabla_x f)\dot{} = Lr + B_2r + A_2 x + E_2, \label{sys_z_h_la_dif_abs_2} \\
c^2\nabla_\nu r + B_\partial r = Sx + E_\partial \text{ on } \partial \Om, \label{bc_sys_z_h_la_dif_abs}
\end{subnumcases}
where $x$ replaces $z^\la - z$, $r$ replaces $h$, $L$ is as in (\ref{eq_f_estimates}), and the other terms are defined in the 
obvious way.

We first consider the case $E_1=E_2=E_\partial=0$ which is a homogeneous linear system in $(x,r)$. 
Also let $(x_0,r_0) \in X^{3,4,4}$ be the initial data which we assume satisfy the compatibility conditions.

Given $r \in X_0^3$, we can consider (\ref{sys_z_h_la_dif_abs_1}) as an inhomogeneous linear equation in $x$. We will 
solve it following the idea of our example (see (\ref{eq_model_anot_1})-(\ref{eq_model_direct_big})). We first assume $r=0$, so we have:
\begin{gather}
 \dot{x} + \nabla_x u = 0
\label{eq_sys_x_r_1}
\end{gather}
But this is equivalent to 
\begin{gather}
 \partial_t (x(t)\circ \zeta(t))=-(\nabla_{x(t)}u(t))\circ \zeta(t).
\label{eq_sys_x_r_equiv_1}
\end{gather}
For a fixed $t$, the operator $B_3(t): H^2(\Om,\RR^n) \rar H^2(\Om,\RR^n)$, defined 
by 
\begin{gather}
B_3(t) y = -(\nabla_{y\circ\zeta(t)^{-1}}u\circ\zeta(t)^{-1})\circ \zeta(t),
\nonumber
\end{gather}
 is 
bounded since $u,~\zeta \in H^3$ and since $u(t)$ and $\zeta(t)$ are continuous in $t$, so is $B_3(t)$ in 
the operator norm. Thus (\ref{eq_sys_x_r_equiv_1}), which is 
\begin{gather}
 \partial_t y = -B_3y
\nonumber
\end{gather}
is simply a linear ordinary differential equation in $H^2(\Om,\RR^2)$. Hence we can solve it for $y(t)$ and then we define
$x(t) = y(t)\circ(\zeta(t))^{-1}$. It is clear that this $x(t)$ satisfies (\ref{eq_sys_x_r_1}) and that it is in $Z$. We 
let $U(t,s): H^2(\Om,\RR^n) \rar H^2(\Om,\RR^n)$ be the operator which gives the solution of (\ref{eq_sys_x_r_1}). That is,
for any $t,~s \in [0,T]$,  $x(t) = U(t,s)x_0$ is the solution of (\ref{eq_sys_x_r_1}) with initial condition $x(s)=x_0$. By 
standard means we can show that $\n U(t,s) \n \leq e^{K|t-s|}$ where $K$ depends only on $\n u \n_3$.

Now we can solve (\ref{sys_z_h_la_dif_abs_1}) for general $r \in X_0^3$. We have the usual (Duhamel's) formula
\begin{gather}
 x(t) = U(t,0)x_0 + \int_0^tU(t,s)(A_1r(s) + B_1r(s))ds
\nonumber
\end{gather}
Since $r(s) \in H^3$, $A_1 r(s) \in H^2$, and $B_1 r(s) \in H^3$, so $x(t) \in H^2$. Also 
since $x$ satisfies (\ref{sys_z_h_la_dif_abs_1}) and $r \in X_0^3$ it follows that $x \in Z$. Furthermore we get:
\begin{gather}
 \n x \n_Z \leq K ( \n x_0 \n_2 + \n r\n_{X_0^3})
\label{estimate_x_basic}
\end{gather}
Give $x \in Z$, we can also solve (\ref{sys_z_h_la_dif_abs_2})-(\ref{bc_sys_z_h_la_dif_abs}) for $r \in X_0^3$. In fact 
(\ref{sys_z_h_la_dif_abs_2}) is a second order hyperbolic equation for $r$ with inhomogeneous 
term $-\nabla_x\dot{f} -(\nabla_x f)\dot{} + A_2x$ and with Neumann type boundary condition (\ref{bc_sys_z_h_la_dif_abs}). 
Such equations are considered in \cite{M}
where good estimates for their solutions are given. In particular if we consider
\begin{gather}
\begin{cases}
 \ddot{r} = Lr + B_2r + A_2 x + F ,
\\
c^2 \nabla_\nu r + B_\partial r = G \text{ on } \partial \Om,
\end{cases}
\label{eq_M}
\end{gather}
and assume that compatibility conditions hold up to order $k-1$ we get:
\begin{gather}
 \n r(t) \n_{X_0^k} \leq K( \n r(0) \n_{X_0^k} + \int_0^t ( \n F(s) \n_{X_0^{k-1}} 
+ \n G(s) \n_{X^{k-\frac{1}{2}}_{\frac{1}{2},\partial}} )ds
\label{estimate_r_t_X_0_k}
\end{gather}
where
\begin{gather}
 \n r(t) \n_{X_0^k} = \sum_{j=0}^k \n \partial_t^j r(t) \n_{k-j} 
\nonumber \\
\n G(t) \n_{X^{k-\frac{1}{2}}_{\frac{1}{2},\partial}} = \sum_{j=0}^{k-1} \n \partial_t^j G(t) \n_{\partial,k-j-\frac{1}{2}}
\nonumber
\end{gather}
In the cases of 
(\ref{sys_z_h_la_dif_abs_2})-(\ref{bc_sys_z_h_la_dif_abs}) the inhomogeneous term $A_2 x(t)$ is in $H^1$, but not in $H^2$, and 
the other terms are at least as smooth. Thus if we let
\begin{gather}
 \n x(t) \n_Z = \n x(t) \n_2 + \n \dot{x}(t) \n_2 + \n \partial_t \dot{x} \n_1 + \n \partial_t^2 \dot{x} \n_0
\label{norm_x_t_dot_partial}
\end{gather}
we get
\begin{gather}
 \n r(t) \n_{X_0^2} \leq K (\n r(0) \n_{X^2_0} + \int_0^t (\n x(s) \n_Z + \n S x(s) 
 \n_{X_{\frac{1}{2},\partial}^{1\half } } )ds.
\label{norm_r_t}
\end{gather}
Furthermore, since restriction of functions to $\partial \Om$ gives a 
continuous map from $H^s(\Om)$ to $H^{s-\frac{1}{2}}(\partial \Om)$ ($s > \frac{1}{2}$), the 
term $\n S x(s) \n_{X_{\frac{1}{2},\partial}^{1\half} }$ is bounded by $\n x(s) \n_Z$. Therefore we 
can omit it in (\ref{norm_r_t}).

We would like to get a bound for $\n r(t) \n_{X_0^3}$, and to do so we apply `` $\dot{}$ '' to 
(\ref{sys_z_h_la_dif_abs_2}) and (\ref{bc_sys_z_h_la_dif_abs}). This gives
\begin{gather}
 \dddot{r} + (\nabla_x \dot{f})\dot{} + (\nabla_x f)\,\ddot{} = L\dot{r} + L_1 r + B_2 \dot{r} + B_{21} r + (A_2 x)\dot{},
\label{eq_of_previous_form}
\end{gather}
and 
\begin{gather}
 c^2 \nabla_\nu \dot{r} + c^2 \nabla_{[u,\nu]} r + (c^2)\dot{} \,\nabla_\nu r + B_\partial \dot{r} + B_{\partial 1} r =
(Sx)\dot{} \text{ on } \partial \Om,
\nonumber
\end{gather}
where $L_1$ is defined in (\ref{definition_L_1}), $B_{21} r = (B_2 r)\dot{} - B_2 \dot{r}$ 
and $B_{\partial 1} r = (B_\partial r)\dot{} - B_\partial \dot{r}$. (\ref{eq_of_previous_form}) is an 
equation of form (\ref{eq_M}) for $\dot{r}$ with inhomogeneous terms
\begin{gather}
 F = -(\nabla_x \dot{f} )\dot{} - (\nabla_x f)\,\ddot{} + L_1 r + B_{21} r + (A_2 x)\dot{} 
\nonumber
\end{gather}
and
\begin{gather}
 G = -c^2\nabla_{[u,\nu]} r + (c^2)\dot{}\, \nabla_\nu r - B_{\partial 1} r + (Sx)\dot{} \,\,.
\nonumber
\end{gather}
Since $\n (A_2 x)\dot{} \n_{X_0^1} \leq K \n x \n_Z$ and other terms are sufficiently smooth, from 
(\ref{estimate_r_t_X_0_k}) we get
\begin{gather}
 \n \dot{r} (t) \n_{X_0^2} \leq K ( \n \dot{r}(0) \n_{X_0^2} + \int_0^t( \n x(s) \n_Z + \n r(s) \n_{X_0^3} ) ds.
\label{estimate_r_dot_t_X_0_3}
\end{gather}
Combining (\ref{estimate_r_dot_t_X_0_3}) and (\ref{norm_x_t_dot_partial}) with the equations 
(\ref{eq_of_previous_form}) and (\ref{sys_z_h_la_dif_abs_2}) we obtain
\begin{gather}
 \n r \n_{X^3_0} \leq K ( \n r(0) \n_{X_0^3} + \int_0^t \n x(s) \n_Z ds ).
\label{estimate_r}
\end{gather}
The inequalities (\ref{estimate_x_basic}) and (\ref{estimate_r}) show that if $T$ is small enough we can solve 
(\ref{sys_z_h_la_dif_abs_1})-(\ref{bc_sys_z_h_la_dif_abs}) by an iteration. Given initial data $(x_0,r_0)$ we 
let $x_1$ be a curve in $Z$ starting at $x_0$. Then let $r_1$ be the solution 
of (\ref{sys_z_h_la_dif_abs_2})-(\ref{bc_sys_z_h_la_dif_abs})
with initial data $r_1(0) = r_0$, $\dot{r}_1(0) = \de x_0 - \nabla_{x_0} f_0$ and with $x_1$ in place of $x$. Let
\begin{gather}
 x_2(t) = U(t,0)x_0 + \int_0^t(U(t,s)(A_1 r_1(s) + B_1 r_1(s) )ds
\nonumber
\end{gather}
and let $r_2$ be the solution of (\ref{sys_z_h_la_dif_abs_2})-(\ref{bc_sys_z_h_la_dif_abs}) with the same initial 
data but with $x_2$ in place of $x$. Continue inductively to get sequences $\{ x_n \} \subset Z$ and $\{ r_n \} \subset X_0^3$. 
Clearly
\begin{gather}
 (x_n - x_{n-1})\dot{} + \nabla_{x_n - x_{n-1}} u = A_1(r_{n-1} - r_{n-2}) + B_1 (r_{n-1} - r_{n-2})
\nonumber
\end{gather}
and $x_n(0) - x_{n-1}(0) = 0$. Hence by (\ref{estimate_x_basic}) we get
\begin{gather}
 \n x_n - x_{n-1} \n_Z \leq K \n r_{n-1} - r_{n-2} \n_{X_0^3}.
\nonumber
\end{gather}
Similarly from (\ref{estimate_r}) we find
\begin{gather}
 \n r_n - r_{n-1} \n_{X_0^3} \leq K \int_0^t \n x_n(s) - x_{n-1}(s) \n_Z ds.
\nonumber
\end{gather}
From these two inequalities if follows that if $K^2T < 1$, $\{ (x_n,r_n) \}$ is a Cauchy sequence  in $Z \times X_0^3$. Its limit, 
which we call $(z,h)$, is clearly a solution of (\ref{sys_z_h_la_dif_abs_1})-(\ref{bc_sys_z_h_la_dif_abs}) or equivalently of 
(\ref{sys_z_h_1})-(\ref{ic_sys_z_h}). Since the system is linear we can piece together short time solutions to get a solution on 
any interval. Thus we can drop the restriction $K^2 T < 1$, and get a solution on any interval $[0,T]$ on which $u$ and $f$ are defined.

Having solved the homogeneous system (\ref{sys_z_h_la_dif_abs_1})-(\ref{bc_sys_z_h_la_dif_abs}) we can solve the 
inhomogeneous system by the usual application of Duhamel's formula. Then using the inequalities 
(\ref{estimate_x_basic}) and (\ref{estimate_r}) with inhomogeneous terms included we find that the solution of 
(\ref{sys_z_h_la_dif_abs_1})-(\ref{bc_sys_z_h_la_dif_abs}) obeys
\begin{gather}
 \n x \n_Z \leq K (\n x(0) \n_2 + \n r \n_{X_0^3} + \n E_1 \n_{X_0^2} )
\nonumber
\end{gather}
and 
\begin{gather}
 \n r \n_{X_0^3} \leq K \big ( \n r(0) \n_{X_0^3} + \int_0^T ( \n x(t) \n_Z 
+ \n E_2(t) \n_{X_0^2} + \n E_\partial (t) \n_{X_{\frac{1}{2},\partial}^{2\half }} ) dt \big )
\nonumber
\end{gather}
Iterating these inequalities we find a new $K$ such that
\begin{gather}
 \n x \n_Z + \n r \n_{X_0^3} \leq K \big ( \n x(0) \n_2 + \n r(0) \n_{X_0^3} + \n E_1 \n_{X_0^2} + \n E_2 \n_{X_0^2} 
+ \n E_\partial \n_{X_{\frac{1}{2},\partial}^{2\half }} \big )
\label{estimate_x_and_r_together}
\end{gather}

Now we are ready to show that $(z^\la - z, h^\la - h)$ is $O(\la)$ in $Z \times X_0^3$. We simply consider the system
(\ref{sys_z_h_la_difference_1})-(\ref{def_E_partial}) with initial 
conditions $z^\la_0 - z_0$, $h^\la_0 - h_0$ and $\de(z_0^\la - z_0)-\nabla_{z^\la_0 - z_0}f_0$. 
These initial conditions are $O(\la)$ in $H^3\times H^4 \times H^3$ because $(u_0^\la,f_0^\la)$ 
is a $C^1$-curve in $X^{3,4,4}$. Hence we need only show that the last three terms of 
(\ref{estimate_x_and_r_together}) are $O(\la)$. This one does in the same way as in the 
estimate (\ref{estimate_E_s_1_O_la}) of $\n E \n_{s-1}$. We omit the details.
\end{proof}

\section{Asymptotic Approximation to Compressible Fluid Motion \label{assymp} }

In this section we show how the equation (\ref{int_eq_zeta}) can be used to find approximate solutions to the 
compressible fluid motion problem. We will construct a sequence $\{ \zeta_n \}$ of curves in $\mathscr{D}^3$ which will 
approach the actual fluid motion $\zeta(t)$. The methods used to estimate the $\{ \zeta_n \}$ are some of the methods of 
sections \ref{estimates} 
through \ref{diff_dep_quasi}. Hence our presentation will be brief.

First we note that the estimate (\ref{difference_eta_dot_zeta_dot_k_iteration}) shows that incompressible motion is itself an 
approximation of order $\frac{1}{\sqrt{k}}$. We let $(u_{0k},\rho_{0k})$ be as in Theorem \ref{first_big_theo}, 
let $v_0 = Pu_{0k}$ and let $\eta$ and $\zeta$ be incompressible and compressible fluid motions as in 
Theorem \ref{equiv_second_big_theo}. Then by
(\ref{difference_eta_dot_zeta_dot_k_iteration})
\begin{gather}
 \n \dot{\eta}(t) - \dot{\zeta}(t) \n_3 \leq K \frac{(1 + t)}{\sqrt{k}} 
+ \int_0^t Z(\eta(\tau),\dot{\eta}(\tau)) - Z(\zeta(\tau),\dot{\zeta}(\tau) ) \n_3 d\tau.
\nonumber
\end{gather}
Iterating this inequality we get
\begin{gather}
 \n (\eta(t),\dot{\eta}(t)) - (\zeta(t),\dot{\zeta}(t)) \n_3 \leq \frac{K}{\sqrt{k}}.
\label{app_estimate}
\end{gather}
To get better approximations we will construct a sequence of curves $\zeta_n(t)$ such that 
\begin{gather}
 \n (\zeta_n(t),\dot{\zeta}_n(t) ) - (\zeta_{n-1}(t),\dot{\zeta}_{n-1}(t) )\n_3 \leq K k^{-\frac{n}{2}}
\label{app_better}
\end{gather}
and 
\begin{gather}
 \n (\zeta(t),\dot{\zeta}(t) ) - (\zeta_{n}(t),\dot{\zeta}_{n}(t) )\n_3 \leq K k^{-\frac{n+1}{2}}.
\label{app_better_2}
\end{gather}
To begin we simply let $\zeta_0(t) = \eta(t)$ be the incompressible motion. Then we define the $\zeta_n$ inductively as follows:

Let $f_0(t) = \log\rho_{0k}(t)$. Then let $f_n$ be the solution of
\begin{gather}
\ddot{f}_n = -\de c^2(f_n) \nabla f_n + (u_{n-1})_j^i(u_{n-1})^j_i  \, ,\label{app_def_f_n} \\
c^2(f_n) \nabla_\nu f_n = -S_2(u_{n-1},u_{n-1}) \text{ on } \partial\Om,
 \label{app_bc_def_f_n} \\
f_n(0) = \log \rho_{0k},~ \dot{f}(0) = \de u_{0k}.
\nonumber
\end{gather}
where $u_{n-1} = \dot{\zeta}_{n-1}\circ (\zeta_{n-1})^{-1}$ as in (\ref{zeta_dot_u})-(\ref{u_zeta_dot_compose_zeta_inverse}) 
and `` $\dot{}$ '' means $\partial_t + \nabla_{u_{n-1}}$. Let $\nabla g_n(t) = \nabla \Delta^{-1} \dot{f}_n$ and let $\zeta_n$ be 
the solution of
\begin{gather}
\begin{cases}
 \dot{\zeta}_n(t) = Pu_{0k} + \int_0^t \tilde{Z}(\zeta_n(s),\dot{\zeta}_n(s))ds 
+ \nabla g_n(t) \circ \zeta_n(t),  \\
\zeta_n(0) = \operatorname{id},
\end{cases}
\label{app_zeta_tilde_Z}
\end{gather}
where $\tilde{Z}$ is defined by: $\tilde{Z}(\xi,\al) = Z(\xi,\al) - R(\xi,\al)$ (cf. section \ref{proofs}).

Since $\nabla g_n(t) \in H^4$, the map $\xi \mapsto \nabla g_n(t)\circ \xi$ is $C^1$ as a map from 
$\mathscr{D}^3$ to $H^3(\Om,\RR^n)$. Also $\tilde{Z}$ is a smooth map from $\mathscr{D}^3 \times H^3 \rar H^3$. 
Therefore from standard estimates involving the Lipshitz constants of $\tilde{Z}$ and $\Om_{\nabla g(t)}$ we find
\begin{gather}
 \n (\zeta_n(t),\dot{\zeta}_n(t)) - (\zeta_{n-1}(t),\dot{\zeta}_{n-1}(t)) \n_3 \leq K \n \nabla g_n(t) - \nabla g_{n-1}(t) \n_3
\label{app_zeta_g}
\end{gather}
However $\n \nabla g_n(t) - \nabla g_{n-1}(t) \n_3 \leq K \n \dot{f}_n - \dot{f}_{n-1} \n_2$ and using estimates like 
those of sections \ref{estimates} and \ref{diff_dep_quasi} we find that
\begin{gather}
 \n \dot{f}_n(t) - \dot{f}_{n-1}(t) \n_2 \leq \frac{K}{\sqrt{k}} \n u_{n-1} - u_{n-2} \n_3.
\label{app_f_u}
\end{gather}
Combining  (\ref{app_zeta_g}) and (\ref{app_f_u}) we get
\begin{gather}
 \n (\zeta_n,\dot{\zeta}_n) - (\zeta_{n-1},\dot{\zeta}_{n-1}) \n_3 \leq \frac{K}{\sqrt{k}}
\n (\zeta_{n-1},\dot{\zeta}_{n-1}) - (\zeta_{n-2},\dot{\zeta}_{n-2}) \n_3 ~~(n\geq 2).
\label{app_zeta_n_minus_zeta_n-1}
\end{gather}
Also $\zeta_0 = \eta$ so 
\begin{gather}
 \dot{\zeta}_0(t) = P u_{0k} + \int_0^t\tilde{Z}(\zeta_0(\tau),\dot{\zeta}_0(\tau))d\tau
\nonumber
\end{gather}
and
\begin{gather}
 \dot{\zeta}_1(t) = Pu_0 + \int_0^t\tilde{Z}(\zeta_1(\tau),\dot{\zeta}_1(\tau))d\tau + \nabla g_1(t)\circ \zeta_1(t).
\nonumber
\end{gather}
But since $f_1$ satisfies (\ref{app_def_f_n})-(\ref{app_bc_def_f_n}) we know $\n \dot{f}_1 \n\leq \frac{K}{\sqrt{k}}$ so 
$\n \nabla g_1(t) \n_3 \leq \frac{K}{\sqrt{k}}$. Hence it follows as before that 
\begin{gather}
 \n (\zeta_1,\dot{\zeta}_1) - (\zeta_0,\dot{\zeta}_0) \n_3 \leq \frac{K}{\sqrt{k}}
\label{app_zeta_1_minus_zeta_0}
\end{gather}
(\ref{app_zeta_n_minus_zeta_n-1}) and (\ref{app_zeta_1_minus_zeta_0}) together imply (\ref{app_better}).

Now we prove (\ref{app_better_2}). If $n=0$ (\ref{app_better_2}) is simply (\ref{app_estimate}). For $n>0$, we note that
$\zeta$ satisfies (\ref{app_zeta_tilde_Z}) with $g$ replacing $g_n$. Thus we get as before
\begin{gather}
 \n ( \zeta,\dot{\zeta}) - (\zeta_n,\dot{\zeta}_n) \n_3 \leq K \n \nabla g - \nabla g_n \n_3 \leq K \n \dot{f} - \dot{f}_n \n_2.
\label{app_zeta_zeta_n_2nd_a}
\end{gather}
Also
\begin{gather}
 \n \dot{f} - \dot{f}_n\n_2 \leq \frac{K}{\sqrt{k}}\n u - u_{n-1} \n_3 \leq \frac{K}{\sqrt{k}}
\n  (\zeta,\dot{\zeta}) - (\zeta_{n-1},\dot{\zeta}_{n-1}) \n_3 .
\label{app_f_f_n_2nd_a}
\end{gather}
(\ref{app_better_2}) follows from (\ref{app_zeta_zeta_n_2nd_a}) and (\ref{app_f_f_n_2nd_a}) by induction.
\begin{rema}
 Equation (\ref{eq_f_convected}) which is (\ref{app_def_f_n}) without the subscripts ``$n$'' and ``$n-1$'' is the 
equation of sound on a fluid moving with velocity $u$. Acoustical engineers sometimes find an approximate 
solution to this equation by solving 
(\ref{app_def_f_n}) with $n=1$ (cf. \cite{H}).
\end{rema}

\appendix

\section{Function spaces and some auxiliary results}

In this appendix we review some basic constructions and known results.

Given $g$ and $h$, $C^k$ functions from $\Om$ to $\RR$, we define the inner product
\begin{gather}
 (g,h)_k = \int_\Om \sum_{\ell=0}^k \langle \nabla^\ell g,\nabla^\ell h\rangle
\nonumber
\end{gather}
where $\nabla^\ell g$ is the vector valued function consisting of all $\ell^{\text{th}}$ order partial derivatives of $g$. This 
inner product induces a norm (which we call $\parallel ~\parallel_k$) on $C^k(\Om,\RR)$, the set of all $C^k$ 
functions on $\Om$. $H^k(\Om,\RR)$ is defined to be the completion of $C^k(\Om,\RR)$ in this 
norm. $C^k(\Om,\RR^m)$ and $H^k(\Om,\RR^m)$ will be analogous spaces for functions with values in $\RR^m$. Sometimes we 
write only $H^k$. We shall also use the norm
\begin{gather}
\parallel g \parallel_{C^k} = \sup_{x\in\Om} \Big ( \sum_{\ell=0}^k |\nabla^\ell g(x)| \Big )
\nonumber
\end{gather}
the usual $C^k$ norm on $C^k(\Om,\RR)$. It is well known that in this norm $C^k(\Om,\RR)$ is complete.

By the Sobolev embedding theorem, $H^s(\Om,\RR^m) \subset C^k(\Om, \RR^m)$ if $s > k+ \frac{n}{2}$ and for any
$w \in H^s(\Om,\RR^m)$:
\begin{gather}
 \parallel w \parallel_{C^k} \leq K \parallel w \parallel_s
\nonumber
\end{gather}
where $K$ depends only on $k,~s$ and $\Om$. (see \cite{T}, chapter 4.)  Also, if $s > \frac{n}{2}$, $h \in H^s$ and $g \in H^{s_1}$, with $s\geq s_1 \geq 0$, then
$gh \in H^{s_1}$ and we have the product estimate
\begin{gather}
 \parallel gh \parallel_{s_1} \leq K \parallel g \parallel_{s_1} \parallel h \parallel_s
\label{product_estimate}
\end{gather}
where $K$ depends on $s_1$, $s$ and $\Om$. (see \cite{T}, chapter 13.)

Equation (\ref{product_estimate}) will be used throughout the paper to estimate
the several products involved. 

Since $\partial \Om$ is a compact manifold we can also define $C^k(\partial \Om, \RR)$, the space of $C^k$-functions 
on $\partial \Om$, and completing this space with respect to an appropriate inner product we get the 
Hilbert space $H^k(\partial \Om, \RR)$. Analogously one constructs 
$H^k(\partial \Om, \RR^m)$.

Using Fourier series we can construct $H^s(\Om)$ and $H^s(\partial \Om)$ for all non-negative real numbers $s$ (cf. \cite{P}), 
and with this we get a restriction inequality as follows. 
Let $R: C^s(\Om) \rar C^s(\partial \Om)$ be defined by restricting each $C^s$ function to its values on $\partial \Om$. As is 
shown in \cite{P}, if $s > \frac{1}{2}$, $R$ extends to a bounded linear map $R: H^s(\Om) \rar H^{s-\frac{1}{2}}(\partial \Om)$.
Denote by  $\parallel ~ \parallel_{\partial, s}$ the norm of $H^s(\partial \Om)$. Then we get the inequality
\begin{gather}
 \parallel Rh \parallel_{\partial,s-\frac{1}{2}} \leq K \parallel h \parallel_s,\,\, s > \frac{1}{2},
\label{restrictions_inequality}
\end{gather}
where $K$ depends on $s$ and $\Om$. (\ref{restrictions_inequality}) will
be used throughout the paper to estimate the restrictions to $\partial \Om$.

If $s > \frac{n}{2} + 1$ we define $  \mathscr{D}^s(\Om) \equiv \mathscr{D}^s$ to be the set of bijective maps from $\Om$ to itself which together 
with their inverses are in $H^s(\Om,\RR^n)$. It is easy to prove the following:
\begin{gather}
\mathscr{D}^s = \Big \{ \zeta \in H^s(\Om,\RR^n) ~\big | ~ \zeta: \Om \rar \Om \text{ is bijective, and }
J(\zeta) \text{ is nowhere zero } \Big  \}
\nonumber
\end{gather}
where $J$ is the Jacobian. Also $\mathscr{D}^s$ can be shown to be a subgroup of the group of $C^1$ diffeomorphisms 
of $\Om$ (see \cite{EM} or \cite{E1}).


\begin{thebibliography}{ZZZZ}
\bibitem[Al]{AlazardIncompWall} Alazard, T. \emph{Incompressible limit of the nonisentropic 
Euler equations with the solid wall boundary conditions}. Adv. Differential Equations. Vol. 10, No. 1, 
19-44 (2005).


\bibitem[BB]{BB} Bourguignon, J. P.; Brezis, H. \emph{Remarks on the Euler equation}. Journal of Functional Analysis, Vol.15, 1974, pp. 341-363.

\bibitem[C1]{ChengLowMach} Cheng, B. \emph{Low-Mach-number Euler equations with solid-wall boundary condition and general initial data.} arXiv:1006.1148 [math.AP] (2005).

\bibitem[C2]{ChengImprovedInc} Cheng, B. 
\emph{Improved accuracy of incompressible approximation of
              compressible {E}uler equations}. SIAM J. Math. Anal., Vol. 46, No. 6, 3838-3864 (2014).

\bibitem[DE1]{DE} Disconzi, M. M.; Ebin, D. G. 
\emph{On the limit of large surface tension for a fluid motion with free boundary.} 
Communications in Partial Differential Equations, 39: 740-779 (2014).

\bibitem[DE2]{DE2} Disconzi, M. M.; Ebin, D. G. \emph{The free boundary {E}uler equations with large surface tension.}
Journal of Differential Equations, Vol. 261, Issue 2, pp. 821-d889 (2016).

\bibitem[E1]{E1} Ebin, D. G. \emph{Espace des metrique riemanniennes et mouvement des fluids via les varietes d'applications}. Ecole Polytechnique, Paris, 1972.

\bibitem[E2]{E2} Ebin, D. G. \emph{The motion of slightly compressible fluids viewed as a motion with strong constraining force}. Annals of Math., vol 105, Number 1, 1977, pp 141-200.

\bibitem[E3]{E3} Ebin, D. G. \emph{The initial boundary value problem for sub-sonic fluid motion}. Comm. on Pure and Applied Math. Vol. XXXII, pp. 1-19 (1979).

\bibitem[E4]{E4} Ebin, D. G. \emph{Motion of slightly compressible fluids in a bounded domain. I} Comm. on Pure and Applied Math. Vol. XXXV, pp. 451-485 (1982).  

\bibitem[EM]{EM} Ebin, D. G.; Marsden, J. \emph{Groups of diffeomorphisms and the motion of an incompressible fluid}. Annals of Math., Vol. 92, 1970, pp. 102-163.

\bibitem[G]{G} Gunther, N. M. \emph{On the basic problem of hydrodynamics} (Russian). Notices of the Steklov Physics-Mathematics Institute, Vol. II, pp. 1-168 (1924-1927).

\bibitem[H]{H} Howe, M.S. \emph{Contributions to the theory of aerodynamic sound, with applications to excess jet noise and he theory of the flute}. Journal of Fluid Mechanics, Vol. II, part 4, pp. 625-673 (1975)

\bibitem[K1]{K1} Kato, T. \emph{On classical solutions of two-dimensional non-stationary Euler equation}. Archive for Rational Mechanics and Analysis, Vol. 25, No. 3 (1967), pp. 188-200.

\bibitem[K2]{K2} Kato, T. \emph{The Cauchy problem for quasi-linear symmetric hyperbolic systems}. Archive for Rational Mechanics and Analysis, Vol. 58, No. 3 (1975), pp. 181-205.

\bibitem[K3]{K3} Kato, T. \emph{Quasi-linear equations of evolution with applications to partial differential equations}. Springer Lecture Notes in Math. No. 448 Spectral Theory and Differential Equations, W. N. Everitt, Ed. (1975).


\bibitem[KM1]{KM1}  Klainerman, S.; Majda, A. \emph{Singular limits of quasilinear hyperbolic systems with large parameters and the incompressible limit of compressible fluids},
Communications on Pure and Applied Mathematics, Vo. 34, No. 2, 481-524, 1981.

\bibitem[KM2]{KM2}  Klainerman, S.; Majda, A. \emph{Compressible and incompressible fluids}.
Communications on Pure and Applied Mathematics, Vo. 35, No. 5, 629-651 (1982).

\bibitem[L]{L} Lang, S. \emph{Differentiable manifolds}. Addison-Wesley Reading, Mass. (1972).

\bibitem[Ma]{Ma}  Majda, A. \emph{Compressible Fluid Flow and Systems of Conservation Laws in Several Space Variables (Applied Mathematical Sciences)}. Springer (1984). 

\bibitem[MS1]{MS1} M{\'e}tivier, G.;  Schochet, S. \emph{The incompressible limit of the non-isentropic Euler equations.}
Arch. Ration. Mech. Anal., Vol. 158, No. 1, 61-90 (2001).

\bibitem[MS2]{MS2} M{\'e}tivier, G.;  Schochet, S. \emph{Averaging theorems for conservative systems and the weakly compressible Euler equations.} J. Differential Equations, Vol. 187, No. 1, 106-183 (2003).

\bibitem[M]{M} Miyatake, S. \emph{Mixed problem for hyperbolic equation of second order}. Journal of Math, Kyoto University 13 (3), 1973, pp. 435-487.

\bibitem[P]{P} Palais, R. S. \emph{Seminar on the Atiyah-Singer index theorem}. Ann. of Math. Studies No. 57, Princeton (1965).

\bibitem[S]{S} Schochet, S. \emph{The compressible Euler equations in a bounded domain: existence of solutions and the incompressible limit}. Comm. Math. Phys., Vol. 104, No. 1, 49-75 (1986).

\bibitem[Se]{Secchi} Secchi, P. \emph{On slightly compressible ideal flow in the half-plane.} Archive for Rational Mechanics and Analysis, Vol. 161, No. 2, 231-255 (2002).

\bibitem[T]{T} Taylor, M. E. \emph{Partial Differential Equations} Volumes 1--3, Springer- Verlag, New York, 1996.

\bibitem[U]{UkaiIncompressible} Ukai, S. 
\emph{The incompressible limit and the initial layer of the compressible Euler equation.}
J. Math. Kyoto Univ., Vol. 26, No. 2, 323-331 (1986).


\end{thebibliography}
\end{document}